\newtheorem{thm}{Theorem}[section]
\newtheorem{lem}[thm]{Lemma}
\newtheorem{cor}[thm]{Corollary}
\newtheorem{pro}[thm]{Proposition}
\newtheorem{ex}[thm]{Example}
\newtheorem{rmk}[thm]{Remark}
\newtheorem{defi}[thm]{Definition}
\newcommand {\emptycomment}[1]{}
\newcommand {\yh}[1]{{\marginpar{*}\scriptsize\textcolor{purple}{yh: #1}}}
\newcommand{\lon }{\,\rightarrow\,}
\newcommand{\be }{\begin{equation}}
\newcommand{\ee }{\end{equation}}
\newcommand{\g}{\mathfrak g}
\newcommand{\h}{\mathfrak h}
\newcommand{\R}{\mathbb R}
\newcommand{\huaB}{\mathcal{B}}%{{\mathcal{E}}}%{\mathcal{B}}
\newcommand{\huaH}{\mathcal{H}}
\newcommand{\huaO}{{\mathcal{O}}}
\newcommand{\frkk}{\mathfrak k}
\newcommand{\dM}{\mathrm{d}}
\newcommand{\RBGH}{\mathsf{RB_G^H}}
\newcommand{\lRBGH}{\mathsf{LRB_G^H}}
\newcommand{\RBgh}{\mathsf{RB_\g^\h}}
\newcommand{\Id}{{\rm{Id}}}
\newcommand{\br}[1]{   [ \cdot,    \cdot  ]   }
\newcommand{\CE}{\mathsf{CE}}
\newcommand{\VE}{\mathsf{VE}}
\newcommand{\Hom}{\mathrm{Hom}}
\newcommand{\Exp}{\mathfrak{ exp}}
\newcommand{\EXP}{\mathrm{Exp}}
\newcommand{\Diff}{\mathbf{\mathfrak{D}}}
\newcommand{\Int}{\mathfrak{I}}
\newcommand{\Der}{\mathrm{Der}}
\newcommand{\Ad}{\mathrm{Ad}}
\newcommand{\Aut}{\mathrm{Aut}}
\newcommand{\gl}{\mathfrak {gl}}
\newcommand{\so}{\mathfrak {so}}
\newcommand{\SO}{\mathrm{SO}}
\newcommand{\ad}{\mathrm{ad}}
\newcommand{\Img}{\mathrm{Im}}
\newcommand{\Sym}{\mathsf{S}}
\newcommand{\tH}{\tilde{H}}
\newcommand{\comment}[1]{{{\color{green}*}\color{blue}{\bf{}*Comment:\\}\scriptsize{\
#1 \ }}}
\begin{document}

\title{Lie  theory and cohomology of relative Rota-Baxter operators}

\author{Jun Jiang}
\address{Department of Mathematics, Jilin University, Changchun 130012, Jilin, China}
\email{jiangjun20@mails.jlu.edu.cn}

\author{Yunhe Sheng}
\address{Department of Mathematics, Jilin University, Changchun 130012, Jilin, China}
\email{shengyh@jlu.edu.cn}

\author{Chenchang Zhu}
\address{Mathematics Institute, Georg-August-University Gottingen, Bunsenstrasse 3-5 37073, Gottingen, Germany}
\email{czhu@gwdg.de}

%\date{\today}

\begin{abstract}
In this paper, we establish a local Lie theory for relative Rota-Baxter
operators of weight $1$. First we recall the category of relative
Rota-Baxter operators of weight $1$ on Lie algebras and construct a
cohomology theory for them. We use the second cohomology group to
study infinitesimal deformations of relative Rota-Baxter operators and modified
$r$-matrices. Then we introduce a cohomology theory of   relative
Rota-Baxter operators on a Lie group. We construct the differentiation functor from the
category of relative Rota-Baxter operators on Lie groups to that on
Lie algebras, and extend it to
the cohomology level by proving the Van Est theorem
between the two cohomology theories. We integrate a relative
Rota-Baxter operator of weight 1 on a Lie algebra to a local relative Rota-Baxter
operator on the corresponding  Lie group, and show that the local
integration and differentiation are adjoint to each other. Finally, we give two applications of our integration of Rota-Baxter operators: one is to give an explicit formula for the factorization problem, and the other is to provide an integration for matched pairs.
\end{abstract}

\keywords{Rota-Baxter operator, modified Yang-Baxter equation,
  cohomology, Van Est map, differentiation, integration, Lie theory\\
 \quad {\em 2020 Mathematics Subject Classification.} 17B38, 17B56, 22E60}

\maketitle

\tableofcontents

\allowdisplaybreaks

%\end{document}

\section{Introduction}

The notion of Rota-Baxter operators on associative algebras was introduced by G. Baxter and they are applied in the
Connes-Kreimer's algebraic approach to renormalization of quantum
field theory~\cite{CK,Gub}. In the corresponding semi-classical world, the notion of relative Rota-Baxter operators (also called $\huaO$-operators) on Lie algebras was introduced in \cite{Ku}, and they are closely related to various classical Yang-Baxter equations. Moreover, from a more algebraic approach, a relative Rota-Baxter operator naturally gives rise to a pre-Lie or a post-Lie
algebra, and play important roles in mathematical
physics~\cite{BGN,Bu}.

A Rota-Baxter operator of weight 0 on a Lie
algebra is naturally the operator form of a classical skew-symmetric
$r$-matrix~\cite{STS}. Rota-Baxter operators of weight 1  are in one-to-one correspondence  with
solutions of the modified Yang-Baxter equation (see Remark
\ref{rmk:mybe}), and give rise to   factorizations of Lie algebras. Integrating such factorizations locally to the level of Lie groups provides natural solutions of a certain Hamiltonian system, which is of great interest for the people coming from integrable
systems~\cite{STS}. In a recent work \cite{GLS}, the notion of a Rota-Baxter operator on a
Lie group was introduced. In particular, one can obtain a Rota-Baxter
operator of weight 1 on a Lie algebra by taking the differentiation of a
Rota-Baxter operator on a Lie group.

Rota-Baxter operators  are
special cases of relative Rota-Baxter operators, where we distinguish
the source and the target of the operator (see Def. \ref{defi:rb-lie-algebra} and
\ref{defi:rb-lie-group}).  In this article, we further study the
differentiation and integration problem for relative Rota-Baxter
operators, and explore their cohomology theory. We find such a
separation of the source and the target makes the cohomology theory and Lie
theory actually much more clear. Differentiation on the level of
cohomology, namely a version of Van-Est theorem, is also given. We
show that differentiation and local integration work for relative
Rota-Baxter operators. Moreover, these two procedures are adjoint to
each other. Our results in particular imply that we may integrate a
Rota-Baxter operator on a Lie algebra to a local Rota-Baxter operator
on a Lie group in a functorial way. This seems to provide some
global geometrical insight to the above mentioned problem in integrable
systems. Moreover, via our integration, we are able to provide an explicit formula for the factorization problem on the level of Lie groups.

Differentiation works without much obstruction, and follows
from adapted Lie theoretical calculations. Integration, even local
integration, is however, a bit tricky: the relative Rota-Baxter
operator is not a Lie algebra or a Lie group homomorphism. To integrate such a map, we
turn to its graph, which has a relative good Lie theoretical property
but not completely compatible with exponential maps. Nevertheless,
a bit luck happens, and the non-compatibleness somehow falls in the kernel
of the operator and the local integration works.  To really achieve the global integration, we expect some topological obstruction. In fact, we have already faced this when we try to integrate an infinite dimensional Lie algebra \cite{neeb:cent-ext-gp, wz:int}, where the obstruction lies in second cohomology groups; and when we try to integrate a Lie algebroid \cite{cf} to a Lie groupoid. On the other hand, we might also use Malcev's method to extend local structures to global ones \cite{malcev}. As therein, an obstruction in algebraic terms would show up. Nevertheless, it turns out that the algebraic obstruction therein is actually  topological in nature \cite{fernandes:ass}. Thus, the local integration and some topological invariants obtained in this article should be good ingredients for the global integration in the next step.

We notice that the cohomology theory of relative Rota-Baxter
operators of weight 0 on Lie algebras, together with those on associative
algebras have been much studied recently \cite{Das,DasM,LST, TBGS}. Also there are some further progresses on Rota-Baxter operators on groups in recently works \cite{BaG,BaG2,Gon}.

A byproduct of our cohomology theory is to provide a classification of
deformation of modified $r$-matrices, which seems to be easier to
carry out from the operator point of view.

Now we describe our results in the time order: we first show that a relative Rota-Baxter operator
$B:\h\lon\g$ of weight 1 on a Lie algebra $\g$ with respect to an action
$\phi:\g\lon\Der(\h)$ gives rise to a representation of the descendent
Lie algebra $(\h,[\cdot,\cdot]_B)$ on the vector space $\g$. It turns
out that the Chevalley-Eilenberg cochain complex of the descendent Lie algebra gives rise to the
controlling algebra of the relative Rota-Baxter operator  $B$ of weight 1
\cite{T-Bai-Guo-Sheng}. In principal, the controlling algebra of an
algebraic structure together with a
differential coming from the algebraic structure itself
provides a good cohomology theory for this algebraic structure. We thus
define the
cohomology of the relative Rota-Baxter operator $B$ to be the
Chevalley-Eilenberg cohomology of the corresponding descendent Lie
algebra. As an application or verification of this cohomology theory, we apply the second
cohomology group to study infinitesimal deformations of relative
Rota-Baxter operators of weight 1 on Lie algebras and that of modified
$r$-matrices. Then parallelly,  we show that
a relative Rota-Baxter operator $\huaB:H\lon G$ on a Lie group $G$
with respect to an action $\Phi:G\lon\Aut(H)$ gives rise to a
representation of the descendent Lie group $(H,\star)$ on the vector
space $\g$. Analogously, the cohomology of a relative Rota-Baxter
operator $\huaB$ is taken to be the  cohomology of the corresponding
descendent Lie group. Moreover, we establish the Van Est map
  from the cochain complex of the relative Rota-Baxter operator
  $\huaB$ on a Lie group $G$ to the cochain complex of the relative
  Rota-Baxter operator $B$ on the corresponding Lie algebra
  $\g$. Van Est theorems hold, and this might also be viewed as a
  justification of our cohomology theory for relative Rota-Baxter
  operators on Lie groups.

In Section \ref{sec:alg}, we give the cohomology of a relative
Rota-Baxter operator of weight 1 on a Lie algebra $\g$ with respect to
an action $\phi:\g\lon\Der(\h)$ using the Chevalley-Eilenberg
cohomology of the descendent Lie algebra $(\h,[\cdot,\cdot]_B)$. As
applications, we study infinitesimal deformations of a relative
Rota-Baxter operator of weight 1 and modified $r$-matrices using the second cohomology group.

In Section \ref{sec:grp}, we give the cohomology of a relative Rota-Baxter operator on a Lie group $G$ with respect to an action $\Phi:G\lon\Aut(H)$ using the  cohomology of the descendent Lie group $(H,\star)$.

In Section \ref{sec:ve},  we first show that the Lie algebra of the
descendent Lie group $(H,\star)$ is the descendent Lie algebra
$(\h,[\cdot,\cdot]_B)$, and the differentiation of the representation
$\Theta:H\lon\gl(\g)$ of the  descendent Lie group $(H,\star)$ is the
representation $\theta:\h\lon\gl(\g)$ of the  descendent Lie algebra
$(\h,[\cdot,\cdot]_B)$. Then we achieve our differentiation operator and consequently, we  establish the Van Est map from the cochain complex of the relative Rota-Baxter operator $\huaB$ on a Lie group $G$ to the cochain complex of the relative Rota-Baxter operator $B$ of weight 1 on the corresponding Lie algebra $\g$.

In Section \ref{sec:loi}, we introduce the notion of a local relative Rota-Baxter operator on a Lie group $G$ and show that a relative Rota-Baxter operator $B:\h\lon\g$ of weight 1 on a Lie algebra can be integrated to  a local relative Rota-Baxter operator on the corresponding connected and simply connected  Lie group $G$.

In Section \ref{sec:app}, we give two applications of the integration of Rota-Baxter operators. One is to provide an explicit formula of local factorization problem and the other is to provide an integration for some special matched pairs.

%\vspace{2mm}

%In this paper, we work over an field $\R$ and all the vector spaces are over $\R$ and finite-dimensional.

\vspace{2mm}
\noindent
{\bf Acknowledgements. } We give our warmest thanks to Jiang-Hua Lu
for very helpful discussions. This research is supported by NSFC
(11922110) and DFG ZH 274/3-1.

\section{Relative Rota-Baxter operators on Lie algebras and their
  cohomology theory }\label{sec:alg}

In this section, we define the cohomology of relative Rota-Baxter operators of weight 1 on Lie algebras. As applications, we classify infinitesimal deformations of relative Rota-Baxter operators of weight 1  on Lie algebras using the second cohomology group.

\subsection{The category $\RBgh$ of relative Rota-Baxter operators on Lie algebras}

In the sequel,  $(\g, [\cdot,\cdot]_{\g})$  and $ (\h,
[\cdot,\cdot]_{\h})$ are   Lie algebras. Let $\phi: \g \to \Der(\h)$ be a Lie algebra
  homomorphism. We call $\phi$ an {\bf action} of $\g$ on $\h$.

\begin{defi}\label{defi:rb-lie-algebra}
Let  $\phi: \g\rightarrow\Der(\h)$ be an action of a Lie
algebra $(\g, [\cdot,\cdot]_{\g})$ on a Lie algebra $ (\h,
[\cdot,\cdot]_{\h})$. A linear map $B: \h\rightarrow\g$ is called a {\bf
  relative Rota-Baxter operator  of weight $1$}  on $\g$ with respect
to $(\h;\phi)$  if
\begin{equation} \label{eq:B}
[B(u), B(v)]_\g=B\Big(\phi(B(u))v-\phi(B(v))u+[u,v]_\h\Big),\quad \forall u, v\in\h.
\end{equation}
In particular, if $\g=\h$ and the action    is the adjoint representation of $\g$ on itself, then   $B$ is called a {\bf Rota-Baxter operator of weight $1$}.
\end{defi}

\begin{rmk}
In the context of Lie algebras, usually one considers relative
Rota-Baxter operators of weight $\lambda$, where $\lambda$ is a
parameter in the front of $[\cdot,\cdot]_\h$ in \eqref{eq:B}. However, in the
context of Lie groups, one can only consider relative Rota-Baxter
operators of weight $\pm1$ due to the fact that there is no linear
structure on Lie groups in general. Thus, in this paper we
consider only relative Rota-Baxter operators of weight $1$, and call
them simply relative Rota-Baxter operators.
\end{rmk}

\begin{rmk}\label{rmk:mybe}
A Rota-Baxter operator $B$ on $\g$ one-to-one corresponds to a solution of  the {\bf modified
      Yang-Baxter equation}, which in turn has important applications
    in integrable systems. We now recall some facts
    from~\cite{Iz,RS1,STS,STS2}.
 Let $R:\g\lon\g$ be a solution of the  modified Yang-Baxter equation:
\begin{equation}
	\label{eq:mybe}
	 [R(u),R(v)]_\g=R([R(u),v]_\g)+R([u,R(v)]_\g)-[u,v]_\g, \quad
         \forall u, v\in \g.
\end{equation}
Each such a solution (which we call  a {\bf modified
  $r$-matrix}) gives rise to a so-called infinitesimal
factorization of the Lie algebra $\g$. The authors integrated this infinitesimal factorization
locally into a factorization of the Lie group $G$ of $\g$. The
integrated factorization in turn
gives an elegant solution of a certain hamiltonian system for small
time $t$. The factorization itself may also be viewed as some version
of Riemann-Hilbert problem.

Under the transformation $$R=\Id+2B,$$ the operator $R$ satisfies the modified Yang-Baxter equation if and only if the operator $B$ is  a  Rota-Baxter operator of weight $1$.
\end{rmk}

\begin{ex}
Let $\g=\mathrm{up}(2;\mathbb{R})$, where $\mathrm{up}(2;\mathbb{R})$ is the set of upper triangular matrices and $\h=\mathbb{R}$. Define $B:\mathbb{R}\lon\mathrm{up}(2;\mathbb{R})$ and $\phi:\mathrm{up}(2;\mathbb{R})\lon\Der(\mathbb{R})$ by
\begin{equation*}
B(r)=\left(
       \begin{array}{cc}
         0 & r \\
         0 & 0 \\
       \end{array}
     \right),\quad \phi(\left(
                          \begin{array}{cc}
                            x & y \\
                            0 & z \\
                          \end{array}
                        \right))r=xr, \quad \forall \left(
                          \begin{array}{cc}
                            x & y \\
                            0 & z \\
                          \end{array}
                        \right) \in\mathrm{up}(2;\mathbb{R}), r\in\mathbb{R}.
\end{equation*}
Then $B$ is a relative Rota-Baxter operator on the Lie algebra $\g$ with respect to the action $(\mathbb R; \phi)$.
\end{ex}

Let  $\phi: \g\rightarrow\Der(\h)$ be an action of a Lie algebra $(\g, [\cdot,\cdot]_{\g})$ on a Lie algebra $ (\h, [\cdot,\cdot]_{\h})$. Define a skewsymmetric bracket operation $[\cdot,\cdot]_{\phi}$ on $\g\oplus \h$ by
\begin{equation}
 [x+u,y+v]_{\phi}=[x,y]_\g+\phi(x)v-\phi(y)u+[u,v]_\h,\quad \forall x,y\in\g, ~u, v\in\h.
\end{equation}
Then it is straightforward to see  that $(\g\oplus \h,[\cdot,\cdot]_{\phi})$ is a Lie algebra, and denoted by $\g\ltimes _\phi\h$.

\begin{pro}\label{graphro}
Let  $\phi: \g\rightarrow\Der(\h)$ be an action of a Lie algebra $(\g, [\cdot,\cdot]_{\g})$ on a Lie algebra $ (\h, [\cdot,\cdot]_{\h})$. Then a linear map $B: \h\rightarrow\g$ is a relative Rota-Baxter operator if and only if the graph   $Gr(B)=\{B(u)+u|u\in \h\}$ is a subalgebra of the  Lie algebra $\g\ltimes _\phi\h$.
\end{pro}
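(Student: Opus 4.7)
\bigskip

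\noindent\textbf{Proof proposal.} The plan is a direct unpacking of the semidirect-product bracket on the graph, so this is essentially a verification rather than a deep argument. First I would pick two arbitrary elements $B(u)+u$ and $B(v)+v$ of $Gr(B)$ and compute their bracket in $\g\ltimes_\phi \h$. Using the definition of $[\cdot,\cdot]_\phi$, this bracket decomposes as
\[
[B(u)+u,\,B(v)+v]_\phi \;=\; [B(u),B(v)]_\g \;+\; \Big(\phi(B(u))v-\phi(B(v))u+[u,v]_\h\Big),
\]
where the first summand lies in $\g$ and the parenthesized summand lies in $\h$.

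Next I would observe that an element $x+w\in\g\oplus\h$ belongs to $Gr(B)$ precisely when $x=B(w)$. Applying this criterion to the bracket above, $Gr(B)$ is closed under $[\cdot,\cdot]_\phi$ if and only if, setting $w:=\phi(B(u))v-\phi(B(v))u+[u,v]_\h$, we have $[B(u),B(v)]_\g=B(w)$; but this is exactly the defining identity \eqref{eq:B} of a relative Rota-Baxter operator. Thus, if $B$ satisfies \eqref{eq:B}, the bracket lies in $Gr(B)$, proving the forward direction. Conversely, if $Gr(B)$ is a subalgebra, then for every $u,v\in\h$ there must exist some $w'\in\h$ with $[B(u)+u,B(v)+v]_\phi=B(w')+w'$; matching $\h$-components forces $w'=\phi(B(u))v-\phi(B(v))u+[u,v]_\h$, and then matching $\g$-components recovers \eqref{eq:B}.

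The only subtlety worth flagging is that one must verify that $\g\ltimes_\phi\h$ is indeed a Lie algebra before speaking of subalgebras, but this is stated in the sentence preceding the proposition and follows from $\phi$ being an action by derivations. No step here is a serious obstacle; the proof is essentially a two-line identification of components once the bracket is written out.
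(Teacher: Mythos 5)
Your proposal is correct and follows essentially the same route as the paper's proof: compute $[B(u)+u,B(v)+v]_\phi$, split into $\g$- and $\h$-components, and observe that closure of $Gr(B)$ is equivalent to the defining identity \eqref{eq:B}. The extra care you take in the converse direction (matching the $\h$-component to pin down $w'$) is implicit in the paper's argument, so there is nothing to add.
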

\begin{proof}
Let $B:\h\rightarrow\g$ be a linear map.
For all $u,v\in \h,$  we have
\begin{eqnarray*}
[B(u)+u,B(v)+v]_{\phi}=[B(u),B(v)]_{\g}+\phi(B(u))v-\phi(B(v))u+[u,v]_\h,
\end{eqnarray*}
which implies that the graph $Gr(B)=\{B(u)+u|u\in \h\}$ is a subalgebra of the  Lie algebra $\g\ltimes _\phi\h$ if and only if $B$ satisfies
\begin{eqnarray*}
[B(u),B(v)]_{\g}=B\Big(\phi(B(u))v-\phi(B(v))u+[u,v]_\h\Big),
\end{eqnarray*}
which means that $B$ is a relative Rota-Baxter operator.
\end{proof}

Since $\h$ and the graph
$Gr(B)$ are isomorphic as vector spaces, we have the following result.
\begin{cor}\label{newliealg}
Let $B: \h\rightarrow\g$ be a relative Rota-Baxter operator on $\g$ with respect to an action $(\h; \phi)$. Then $(\h, [\cdot,\cdot]_B)$ is a Lie algebra, called {\bf the descendent Lie algebra} of $B$, where
\begin{equation}\label{eq:desLieb}
[u,v]_B=\phi(B(u))v-\phi(B(v))u+[u,v]_{\h}, \quad \forall u, v\in\h.
\end{equation}
Moreover, $B$ is a Lie algebra homomorphism from $(\h, [\cdot,\cdot]_B)$ to $(\g, [\cdot,\cdot]_\g)$.
\end{cor}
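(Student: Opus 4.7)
The plan is to leverage Proposition \ref{graphro} directly. Since the graph $Gr(B) = \{B(u)+u \mid u \in \h\}$ is already known to be a Lie subalgebra of the semidirect product $\g \ltimes_\phi \h$, and the projection $\pi_\h \colon Gr(B) \to \h$ given by $B(u)+u \mapsto u$ is a linear isomorphism with inverse $\iota \colon u \mapsto B(u)+u$, I would transport the restricted bracket along $\pi_\h$ to obtain a Lie bracket on $\h$. The calculation already performed inside the proof of Proposition \ref{graphro} gives
\begin{equation*}
[B(u)+u, B(v)+v]_\phi = [B(u),B(v)]_\g + \bigl(\phi(B(u))v - \phi(B(v))u + [u,v]_\h\bigr),
\end{equation*}
so that $\pi_\h([B(u)+u, B(v)+v]_\phi) = \phi(B(u))v - \phi(B(v))u + [u,v]_\h = [u,v]_B$. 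Since the transported bracket on $\h$ coincides with $[\cdot,\cdot]_B$ by formula \eqref{eq:desLieb}, and since being a Lie algebra is preserved under isomorphisms of the underlying vector space, the pair $(\h,[\cdot,\cdot]_B)$ is automatically a Lie algebra.

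For the homomorphism statement, I would use the fact that the projection $\pi_\g \colon \g \ltimes_\phi \h \to \g$ onto the first factor is a Lie algebra homomorphism; this is immediate from the definition of $[\cdot,\cdot]_\phi$, since the $\g$-component of $[x+u,y+v]_\phi$ is exactly $[x,y]_\g$. Restricting $\pi_\g$ to the subalgebra $Gr(B)$ and pre-composing with the isomorphism $\iota \colon (\h,[\cdot,\cdot]_B) \to Gr(B)$ yields precisely the map $u \mapsto B(u)$. As a composition of Lie algebra homomorphisms, $B \colon (\h,[\cdot,\cdot]_B) \to (\g,[\cdot,\cdot]_\g)$ is a Lie algebra homomorphism.

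There is no real obstacle here: Proposition \ref{graphro} does all of the nontrivial work, and the corollary is a clean consequence of transporting structure along the graph. The only thing to keep straight is bookkeeping between the three copies of the same bracket (on $Gr(B)$, on $\h$ via $[\cdot,\cdot]_B$, and on $\g$), but this is entirely routine. If desired, one could alternatively verify the Jacobi identity for $[\cdot,\cdot]_B$ by a direct computation expanding $[[u,v]_B,w]_B + \mathrm{c.p.}$ and using that $\phi$ is a Lie algebra homomorphism into $\Der(\h)$ together with the defining identity \eqref{eq:B}, but the graph argument is considerably cleaner.
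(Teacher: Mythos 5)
Your proposal is correct and is exactly the argument the paper intends: the paper derives this corollary from Proposition \ref{graphro} with the one-line remark that $\h$ and $Gr(B)$ are isomorphic as vector spaces, i.e.\ by transporting the subalgebra structure along the graph, which is what you carry out in detail. Your additional observation that the homomorphism claim follows by composing with the projection $\pi_\g$ is the standard way to fill in the part the paper leaves implicit, and it is accurate.
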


\begin{defi}
Let $B$ and $B'$ be relative Rota-Baxter operators on a Lie algebra $\g$ with respect to the action $(\h;\phi)$. A {\bf homomorphism} from $B'$ to $B$ consists of a Lie algebra homomorphism $\psi_\g: \g\lon\g$ and a Lie algebra homomorphism $\psi_\h: \h\lon\h$ such that
\begin{eqnarray}\label{hom-rbo}
\label{hom-rbo1}\psi_\g\circ B'&=&B\circ\psi_\h,\\
\label{hom-rbo2}\psi_\h(\phi(x)v)&=&\phi (\psi_\g(x) )\psi_\h(v), \quad \forall x\in\g, v\in\h.
\end{eqnarray}
In particular, if both $\psi_\g$ and $\psi_\h$ are invertible, $(\psi_\g, \psi_\h)$ is called an isomorphism from $B'$ to $B$.
\end{defi}

In fact, \eqref{hom-rbo2} is equivalent that $(\psi_\g,\psi_\h)$ is an endomorphism of the Lie algebra $\g\ltimes _\phi\h$.

It is straightforward to see that  relative Rota-Baxter operators on a Lie algebra $\g$ with respect to an action $(\h;\phi)$ together with homomorphisms between them form a category, which we denote by $\RBgh$.

\subsection{A cohomology theory in $\RBgh$}
We first prove some statements which are important for us to build up
our  cohomology theory in a functorial way.
\begin{pro}\label{pro:rep-on-g-alg}
Let $B: \h\rightarrow\g$ be a relative Rota-Baxter operator  on $\g$ with respect to   $(\h;\phi)$.  Define a linear map $\theta: \h\rightarrow\gl(\g)$  by
\begin{equation}\label{lierepT}
\theta(u)x=B(\phi(x)u)+[B(u), x]_\g, \quad \forall x\in\g, u\in\h.
\end{equation}
Then $\theta$ is a representation of the descendent Lie algebra $(\h,[\cdot,\cdot]_B)$ on the vector space $\g$.
\end{pro}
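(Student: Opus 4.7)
The plan is to realize $\theta$ as the induced adjoint action on a quotient, which makes the representation property automatic from standard Lie-theoretic principles rather than from a brute-force bracket expansion.

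First I would invoke Proposition \ref{graphro}: the graph $Gr(B)=\{B(u)+u\mid u\in\h\}$ is a Lie subalgebra of the semidirect product $\g\ltimes_\phi\h$, and by Corollary \ref{newliealg} the linear bijection $\iota\colon (\h,[\cdot,\cdot]_B)\to Gr(B)$, $u\mapsto B(u)+u$, is a Lie algebra isomorphism. As a subalgebra, $Gr(B)$ acts on $\g\ltimes_\phi\h$ by the restriction of the adjoint action, and this action preserves $Gr(B)$; hence it descends to a representation of $Gr(B)$ on the quotient vector space $(\g\ltimes_\phi\h)/Gr(B)$.

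Next I would identify this quotient with $\g$ through the linear map $\pi\colon \g\ltimes_\phi\h\to\g$, $\pi(x+u)=x-B(u)$. It is surjective (since $\pi(x)=x$ for $x\in\g$), and $\Ker\pi=Gr(B)$ (since $\pi(B(u)+u)=0$), so $\pi$ descends to a vector space isomorphism $\bar\pi\colon (\g\ltimes_\phi\h)/Gr(B)\to\g$. Transporting the adjoint action along $\iota$ and $\bar\pi$ yields a representation $\tilde\theta\colon (\h,[\cdot,\cdot]_B)\to\gl(\g)$, and the direct computation
\begin{equation*}
\tilde\theta(u)x=\pi\bigl([B(u)+u,x]_\phi\bigr)=\pi\bigl([B(u),x]_\g-\phi(x)u\bigr)=[B(u),x]_\g+B(\phi(x)u)
\end{equation*}
shows that $\tilde\theta$ coincides with $\theta$ as defined in \eqref{lierepT}. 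The identity $\theta([u,v]_B)=[\theta(u),\theta(v)]$ is then automatic, inherited from the adjoint representation of $\g\ltimes_\phi\h$.

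There is essentially no hard step in this route; the main care is the bookkeeping of identifications, in particular that $\Ker\pi=Gr(B)$ and that the subalgebra property (exactly Proposition \ref{graphro}) is what guarantees the adjoint action descends to the quotient. Should one prefer a hands-on alternative, the identity $\theta([u,v]_B)=\theta(u)\theta(v)-\theta(v)\theta(u)$ can be checked directly by expanding both sides via the definition of $[\cdot,\cdot]_B$, applying the Rota-Baxter relation \eqref{eq:B} to rewrite the terms $[B(u),B(\phi(x)v)]_\g$ and $[B(v),B(\phi(x)u)]_\g$, and invoking that $\phi$ is a Lie algebra homomorphism with values in $\Der(\h)$; the remaining summands then cancel in pairs.
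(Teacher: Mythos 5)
Your main argument is correct, but it takes a genuinely different route from the paper. The paper proves the proposition by a direct expansion: it computes $[\theta(u),\theta(v)]x$ term by term, substitutes the Rota-Baxter identity \eqref{eq:B} for $[B(u),B(\phi(x)v)]_\g$ and $[B(v),B(\phi(x)u)]_\g$, uses that each $\phi(x)$ is a derivation of $\h$ and that $\phi$ is a Lie algebra homomorphism, and matches the result against $\theta([u,v]_B)x$ --- i.e.\ exactly the ``hands-on alternative'' you sketch at the end. Your primary route instead packages everything into the adjoint action of the subalgebra $Gr(B)\subset\g\ltimes_\phi\h$ on the quotient $(\g\ltimes_\phi\h)/Gr(B)$, identified with $\g$ via $\pi(x+u)=x-B(u)$. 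The key verifications all check out: $\Ker\pi=Gr(B)$, the subalgebra property (Proposition \ref{graphro}) guarantees the adjoint action descends, $\iota(u)=B(u)+u$ intertwines $[\cdot,\cdot]_B$ with $[\cdot,\cdot]_\phi$ by Corollary \ref{newliealg}, and the transported operator is $\pi\bigl([B(u)+u,\,x]_\phi\bigr)=[B(u),x]_\g+B(\phi(x)u)=\theta(u)x$, so the homomorphism property is inherited for free from Jacobi in $\g\ltimes_\phi\h$. What your approach buys is conceptual economy --- the long cancellation in the paper's proof is replaced by the standard fact that a subalgebra acts on the ambient algebra modulo itself, and it explains \emph{why} $\theta$ is a representation rather than merely verifying it; what the paper's computation buys is self-containedness and an explicit template that generalizes to settings where no convenient graph/quotient picture is available. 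One small point of care in your version: the action on the quotient is on equivalence classes, so you should note (as you implicitly do by choosing the representative $x+0$) that $\pi\bigl([w,z]_\phi\bigr)$ is independent of the representative $z$ of a class precisely because $[w,\,Gr(B)]_\phi\subseteq Gr(B)$.
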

\begin{proof}
By  the fact  that $\phi(x)\in\Der(\h)$ for all $x\in\g$ and $B: \h\rightarrow\g$ is a relative Rota-Baxter operator, for all $u, v\in\h$, we have
\begin{eqnarray*}
[\theta(u),\theta(v)]x&=&\theta(u)\theta(v)x-\theta(v)\theta(u)x\\
&=&\theta(u)(B(\phi(x)v))+\theta(u)[B(v),x]_\g-\theta(v)(B(\phi(x)u))-\theta(v)[B(u),x]_\g\\
&=&B(\phi(B(\phi(x)v))u)+[B(u), B(\phi(x)v)]_\g+B(\phi([B(v),x]_\g)u)+[B(u), [B(v), x]_\g]_\g\\
&&-B(\phi(B(\phi(x)u))v)-[B(v), B(\phi(x)u)]_\g-B(\phi([B(u),x]_\g)v)-[B(v), [B(u), x]_\g]_\g\\
&=&B\Big(\phi(B(u))\phi(x)v-\phi(B(\phi(x)v))u+[u,\phi(x)v]_\h\Big)\\
&&+B(\phi(B(\phi(x)v))u)+B(\phi([B(v),x]_\g)u)+[B(u), [B(v), x]_\g]_\g\\
&&-B\Big(\phi(B(v))\phi(x)u-\phi(B(\phi(x)u))v+[v,\phi(x)u]_\h\Big)\\
&&-B(\phi(B(\phi(x)u))v)-B(\phi([B(u),x]_\g)v)-[B(v), [B(u), x]_\g]_\g\\
&=&B(\phi(B(u))\phi(x)v)+B(\phi([B(v),x]_\g)u)-B(\phi(B(v))\phi(x)u)-B(\phi([B(u),x]_\g)v)\\
&&+B([u,\phi(x)v]_\h)-B([v,\phi(x)u]_\h)+[B(u), [B(v), x]_\g]_\g-[B(v), [B(u), x]_\g]_\g\\
&=&B\Big(\phi(x)\phi(B(u))v-\phi(x)\phi(B(v))u+\phi(x)[u,v]_\h\Big)+[[B(u),B(v)]_\g, x]_\g\\
&=&B(\phi(x)[u,v]_B)+[B([u,v]_B), x]_\g\\
&=&\theta([u,v]_B)x.
\end{eqnarray*}
Thus $\theta$ is a representation of the descendent Lie algebra $(\h,[\cdot,\cdot]_B)$ on the vector space $\g$.
\end{proof}

\begin{pro}
Let $B: \h\rightarrow\g$ be a relative Rota-Baxter operator  on $\g$ with respect to $(\h;\phi)$.  Then
\begin{equation*}
\phi(x)[u,v]_{B}=[\phi(x)u,v]_{B}+[u,\phi(x)v]_{B}+\phi(\theta(v)x)u-\phi(\theta(u)x)v,\quad \forall u,v\in \h, x\in \g.
\end{equation*}
\end{pro}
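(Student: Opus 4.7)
The plan is to expand both sides of the identity directly using the definitions \eqref{eq:desLieb} of the descendent bracket $[\cdot,\cdot]_B$ and \eqref{lierepT} of $\theta$, and then collapse the resulting expression using the two key structural facts: that $\phi(x)\in\Der(\h)$ (so $\phi(x)$ is a derivation of $[\cdot,\cdot]_\h$), and that $\phi:\g\to\Der(\h)\subset\gl(\h)$ is a Lie algebra homomorphism, so
\[
[\phi(x),\phi(y)]=\phi([x,y]_\g),\qquad\forall x,y\in\g.
\]
No appeal to the Rota-Baxter identity \eqref{eq:B} itself is needed; only the two facts just mentioned plus linearity of $B$.

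First I would expand the left-hand side. Writing $[u,v]_B=\phi(B(u))v-\phi(B(v))u+[u,v]_\h$ and applying $\phi(x)$, the derivation property of $\phi(x)$ on $[u,v]_\h$ gives
\[
\phi(x)[u,v]_B=\phi(x)\phi(B(u))v-\phi(x)\phi(B(v))u+[\phi(x)u,v]_\h+[u,\phi(x)v]_\h.
\]
Next I would expand the right-hand side analogously: each of $[\phi(x)u,v]_B$ and $[u,\phi(x)v]_B$ unfolds into three terms via \eqref{eq:desLieb}, while $\phi(\theta(v)x)u$ and $\phi(\theta(u)x)v$ split, via \eqref{lierepT}, as $\phi(B(\phi(x)v))u+\phi([B(v),x]_\g)u$ and $\phi(B(\phi(x)u))v+\phi([B(u),x]_\g)v$ respectively.

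Collecting the ten terms on the right-hand side, the two pairs $\pm\phi(B(\phi(x)u))v$ and $\pm\phi(B(\phi(x)v))u$ cancel immediately. To match the remaining terms against the left-hand side, I would use the homomorphism property of $\phi$ to rewrite
\[
\phi(B(u))\phi(x)v=\phi(x)\phi(B(u))v+\phi([B(u),x]_\g)v,\qquad
\phi(B(v))\phi(x)u=\phi(x)\phi(B(v))u+\phi([B(v),x]_\g)u.
\]
Substituting these in, the two $\phi([B(u),x]_\g)v$ terms cancel, as do the two $\phi([B(v),x]_\g)u$ terms, and what survives is exactly the left-hand side computed above.

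The only mild obstacle is bookkeeping: the right-hand side contains ten summands after unfolding, and one must line up the two commutator corrections produced by the homomorphism identity with the correct signs so that the $[B(u),x]_\g$- and $[B(v),x]_\g$-terms cancel. Once the two structural facts ($\phi(x)$ is a derivation and $\phi$ is a Lie algebra homomorphism) are used in this order, the identity follows by a purely mechanical comparison.
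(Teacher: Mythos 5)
Your proposal is correct and follows essentially the same route as the paper's proof: both arguments expand $[\cdot,\cdot]_B$ and $\theta$ by their definitions and then use only the derivation property of $\phi(x)$ and the homomorphism identity $[\phi(B(u)),\phi(x)]=\phi([B(u),x]_\g)$ to cancel terms, without invoking the Rota-Baxter identity itself. The only difference is organizational: the paper computes the difference $\phi(x)[u,v]_B-[\phi(x)u,v]_B-[u,\phi(x)v]_B$ and identifies it with $\phi(\theta(v)x)u-\phi(\theta(u)x)v$, whereas you expand both sides fully and match; these are the same calculation.
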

\begin{proof}
For any $u,v\in\h, x\in\g$, by Proposition \ref{pro:rep-on-g-alg}, we obtain
\begin{eqnarray*}
&&\phi(x)[u,v]_{B}-([\phi(x)u,v]_{B}+[u,\phi(x)]_{B})\\
&=&\phi(x)\phi(B(u))v-\phi(x)\phi(B(v))u+\phi(x)[u,v]_{\h}\\
&&-(\phi(B(\phi(x)u))v-\phi(B(v))\phi(x)u+[\phi(x)u,v]_{\h})\\
&&-(\phi(B(u))\phi(x)v-\phi(B(\phi(x)v))u+[u,\phi(x)v]_{\h})\\
&=&[\phi(B(v)),\phi(x)]u+\phi(B(\phi(x)v))u-[\phi(B(u)),\phi(x)]v-\phi(B(\phi(x)u))v\\
&=&\phi([B(v),x]_{\g}+B(\phi(x)v))u-\phi([B(u),x]_{\g}+B(\phi(x)u))v\\
&=&\phi(\theta(v)x)u-\phi(\theta(u)x)v,
\end{eqnarray*}
which finishes the proof.
\end{proof}

Let $\dM_{\CE}^B: \Hom(\wedge^{k}\h, \g)\rightarrow\Hom(\wedge^{k+1}\h, \g)$ be the corresponding Chevalley-Eilenberg coboundary operator of the descendent Lie algebra $(\h,[\cdot,\cdot]_B)$ with coefficients in $(\g;\theta)$. More precisely, for all $f\in\Hom(\wedge^{k}\h, \g)$ and $u_1,\cdots, u_{k+1}\in\h$, we have
\begin{eqnarray}
\label{eq:dce}&&\dM_{\CE}^Bf(u_{1},\cdots, u_{k+1})\\
\nonumber&=&\sum_{i=1}^{k+1}(-1)^{i+1}\theta(u_i)f(u_1, \cdots, \hat{u_i}, \cdots, u_{k+1})+\sum_{i<j}(-1)^{i+j}f([u_i, u_j]_B, u_1, \cdots, \hat{u_i}, \cdots, \hat{u_j}, \cdots, u_{k+1})\\
\nonumber&=&\sum_{i=1}^{k+1}(-1)^{i+1}B(\phi(f(u_1, \cdots, \hat{u_i}, \cdots, u_{k+1}))u_i)+\sum_{i=1}^{k+1}(-1)^{i+1}[B(u_i), f(u_1, \cdots, \hat{u_i}, \cdots, u_{k+1})]_\g\\
\nonumber&&+\sum_{i<j}(-1)^{i+j}f(\phi(B(u_i))u_j-\phi(B(u_j))u_i+[u_i,u_j]_{\h}, u_1, \cdots, \hat{u_i}, \cdots, \hat{u_j}, \cdots, u_{k+1}).
\end{eqnarray}

\begin{defi}\label{deficorbal}
Let $B:\h\lon\g$ be a relative Rota-Baxter operator on the Lie algebra $\g$ with respect to an action $(\h; \phi)$. Define the space of $k$-cochains $C^k(B)$ by $C^k(B)=\Hom(\wedge^{k-1}\h,\g)$. Denote by $C^*(B)=\oplus_{k=1}^{+\infty}C^k(B)$. The cohomology of the   cochain complex $(C^*(B), \dM_{\CE}^B)$  is defined to be the {\bf   cohomology  for the relative Rota-Baxter operator $B$}.
\end{defi}

Denote by $H^k(B)$ the $k$-th cohomology group.

It implies in particular that $x\in\g$ is a 1-cocycle  if and only if
$
B\circ\phi(x)=\ad_x\circ B,
$
and $f\in \Hom(\h, \g)$ is a 2-cocycle if and only if
\begin{eqnarray*}
&&B(\phi(f(u_2))u_1)+[B(u_1), f(u_2)]_\g-B(\phi(f(u_1))u_2)-[B(u_2), f(u_1)]_\g\\
&=&f(\phi(B(u_1))u_2-\phi(B(u_2))u_1+[u_1,u_2]_\h).
\end{eqnarray*}

\begin{rmk}
The controlling algebra of relative
Rota-Baxter operators of weight $1$ on Lie algebras was given in \cite{T-Bai-Guo-Sheng}.  The cohomology theory established here coincides with the one given by the controlling algebra. As inspired by the slogan promoted by Deligne:
{\text
deformations of a given algebraic or geometric structure are governed
by a differential graded Lie algebra (DGLA), or more generally, by an
$L_\infty$-algebra}. This DGLA or
$L_\infty$-algebra, is called the {\bf controlling algebra} of the given
algebraic or geometric structure. Moreover, given the close relation
between the cohomology theory and the deformation theory, we may use the
controlling algebra to define a cohomology theory for the given algebraic or
geometric structure.
\end{rmk}

We need the following statement to prove the functoriality of our
cohomology theory.
\begin{pro}\label{prohomrepalg}
Let $B$ and $B'$ be relative Rota-Baxter operators on a Lie algebra $\g$ with respect to an action $(\h;\phi)$ and $(\psi_\g, \psi_\h)$ be a homomorphism from $B'$ to $B$.

\begin{itemize}
  \item[{\rm(i)}]  $\psi_\h$ is also a Lie algebra homomorphism from
the descendent Lie algebra $(\h, [\cdot,\cdot]_{B'})$ of $B'$ to the descendent Lie algebra $(\h, [\cdot,\cdot]_{B})$ of $B$;

   \item[{\rm(ii)}] The  induced representation $(\g;\theta)$ of the Lie algebra $(\h,[\cdot,\cdot]_B)$ and the induced representation $(\g;\theta')$ of the Lie algebra $(\h,[\cdot,\cdot]_{B'})$ satisfy the following relation:
$$
\psi_\g\circ \theta'(u)=\theta(\psi_\h(u)) \circ \psi_\g,\quad \forall u\in \h.
$$
That is, for all $u\in \h$,   the following diagram commutes:
$$
\xymatrix{
  \g \ar[d]_{\theta'(u)} \ar[r]^{\psi_\g}
                & \g \ar[d]^{\theta(\psi_\h(u))}  \\
    \g \ar[r]_{\psi_\g}
                & \g            .}
$$
\end{itemize}
\end{pro}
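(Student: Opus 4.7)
The plan is to unfold the definitions of the descendent brackets and representations, and then push the homomorphism equations \eqref{hom-rbo1} and \eqref{hom-rbo2} through the computation. Both parts are fundamentally computational; the only subtlety is keeping track of which of the two identities to invoke at each step.

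For part (i), I would start from the defining formula \eqref{eq:desLieb} for $[\cdot,\cdot]_{B'}$, apply $\psi_\h$, and use the fact that $\psi_\h$ is already a Lie algebra homomorphism with respect to $[\cdot,\cdot]_\h$. The intertwining relation \eqref{hom-rbo2} converts $\psi_\h(\phi(B'(u))v)$ into $\phi(\psi_\g(B'(u)))\psi_\h(v)$, and then \eqref{hom-rbo1} converts $\psi_\g(B'(u))$ into $B(\psi_\h(u))$. Doing this for both terms yields
\begin{equation*}
\psi_\h([u,v]_{B'}) = \phi(B(\psi_\h(u)))\psi_\h(v) - \phi(B(\psi_\h(v)))\psi_\h(u) + [\psi_\h(u), \psi_\h(v)]_\h = [\psi_\h(u), \psi_\h(v)]_B,
\end{equation*}
which is exactly what is required.

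For part (ii), I would evaluate both sides of the desired diagram on an arbitrary $x \in \g$. Starting from $\psi_\g(\theta'(u)x) = \psi_\g(B'(\phi(x)u)) + \psi_\g([B'(u),x]_\g)$, I would apply \eqref{hom-rbo1} to the first summand to get $B(\psi_\h(\phi(x)u))$, then \eqref{hom-rbo2} to rewrite this as $B(\phi(\psi_\g(x))\psi_\h(u))$. For the second summand, I use that $\psi_\g$ is a Lie algebra homomorphism to obtain $[\psi_\g(B'(u)), \psi_\g(x)]_\g$, and then \eqref{hom-rbo1} again to turn this into $[B(\psi_\h(u)), \psi_\g(x)]_\g$. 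The result matches $\theta(\psi_\h(u))\psi_\g(x)$ by the definition \eqref{lierepT}.

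There is no serious obstacle here; the whole proof is a bookkeeping exercise that amounts to chasing the homomorphism conditions through the descendent bracket and the formula for $\theta$. If anything, the only care needed is to verify that \eqref{hom-rbo2} indeed encodes $(\psi_\g, \psi_\h)$ being an endomorphism of $\g \ltimes_\phi \h$, so that the linear pieces involving $\phi$ and the bracket pieces on $\h$ can be manipulated simultaneously; this is already noted in the paragraph following the definition of homomorphism. Consequently, I expect the proof to be short and essentially a direct computation, with no need for auxiliary lemmas.
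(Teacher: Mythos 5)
Your proposal is correct and follows exactly the paper's own argument: part (i) is the same two-step rewriting of $\psi_\h([u,v]_{B'})$ via \eqref{hom-rbo2} and \eqref{hom-rbo1}, and part (ii) is the same term-by-term computation of $\psi_\g(\theta'(u)x)$ landing on $\theta(\psi_\h(u))\psi_\g(x)$ by the definition \eqref{lierepT}. Nothing is missing.
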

\begin{proof}
By   \eqref{hom-rbo1}, \eqref{hom-rbo2} and the fact that $\psi_\h$ is a Lie algebra homomorphism, we have
\begin{eqnarray*}
\psi_\h([u,v]_{B'})&=&\psi_\h\Big(\phi(B'(u))v-\phi(B'(v))u+[u,v]_\h\Big)\\
&=&\phi\Big(B(\psi_\h(u))\Big)\psi_\h(v)-\phi\Big(B(\psi_\h(v))\Big)\psi_\h(u)+[\psi_\h(u), \psi_\h(v)]_\h\\
&=&[\psi_\h(u),\psi_\h(v)]_{B}, \quad \forall u, v\in\h,
\end{eqnarray*}
which implies that $\psi_\h $ is a homomorphism between the descendent Lie algebras.

By \eqref{hom-rbo1}, \eqref{hom-rbo2} and \eqref{lierepT}, for all $x\in\g, u\in\h$, we have
\begin{eqnarray*}
\psi_\g(\theta'(u)x)&=&\psi_\g(B'(\phi(x)u))+\psi_\g([B'(u),x]_\g)=B(\psi_\h(\phi(x)u))+[\psi_\g(B'(u)),\psi_\g(x)]_\g\\
&=&B(\phi(\psi_g(x))\psi_\h(u))+[B(\psi_\h(u)),\psi_\g(x)]_\g=\theta(\psi_\h(u))\psi_\g(x).
\end{eqnarray*}
We finish the proof.
\end{proof}

Let $B$ and $B'$ be relative Rota-Baxter operators on $\g$ with respect to an action $(\h;\phi)$, and $(\psi_\g, \psi_\h)$ a homomorphism from $B'$ to $B$ in which $\psi_\h$ is invertible.  Define a map $p: C^{k}(B')\lon C^{k}(B)$ by
\begin{equation*}
p(\omega)(u_1,\cdots,u_{k-1})=\psi_{\g}(\omega(\psi_\h^{-1}(u_1), \cdots,\psi_\h^{-1}(u_{k-1}))), \quad \forall u_i\in \h.
\end{equation*}

\begin{thm}
With  above notations, $p$ is a cochain map from the cochain complex $(C^*(B'), \dM_{\CE}^{B'})$ to the cochain complex $(C^*(B), \dM_{\CE}^B)$. Consequently, it induces a homomorphism $p_*$ from   the cohomology group $H^{k}(B')$  to  $H^{k}(B)$ for all $k\geq 1$.
\end{thm}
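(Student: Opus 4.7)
The plan is to verify the cochain map identity $\dM_{\CE}^B \circ p = p \circ \dM_{\CE}^{B'}$ directly from the explicit formula \eqref{eq:dce} for the Chevalley-Eilenberg differential, leveraging the compatibility statements already established in Proposition \ref{prohomrepalg}. The existence of the induced map $p_*: H^k(B') \to H^k(B)$ on cohomology is then an automatic consequence, since any cochain map sends cocycles to cocycles and coboundaries to coboundaries.

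First I would unpack $p(\dM_{\CE}^{B'}\omega)(u_1,\dots,u_k)$ by applying the definition of $p$: this produces $\psi_\g$ applied to $(\dM_{\CE}^{B'}\omega)(\psi_\h^{-1}(u_1),\dots,\psi_\h^{-1}(u_k))$, which expands into three kinds of terms according to \eqref{eq:dce}: representation terms of the form $\psi_\g\circ\theta'(\psi_\h^{-1}(u_i))$, bracket terms involving $[\psi_\h^{-1}(u_i),\psi_\h^{-1}(u_j)]_{B'}$ inside the argument of $\omega$, and the outer $\psi_\g$ applied to values of $\omega$. Separately I would expand $(\dM_{\CE}^B(p\omega))(u_1,\dots,u_k)$ using the formula with respect to the representation $\theta$ and bracket $[\cdot,\cdot]_B$, and use the definition of $p\omega$ to move $\psi_\g$ through.

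The matching of the two expressions then reduces to two key substitutions, both provided by Proposition \ref{prohomrepalg}: the intertwining relation $\psi_\g\circ\theta'(u) = \theta(\psi_\h(u))\circ\psi_\g$ converts every representation term in $p\circ\dM_{\CE}^{B'}$ into the corresponding term in $\dM_{\CE}^B\circ p$ (after setting $u=\psi_\h^{-1}(u_i)$), while the Lie algebra homomorphism property $\psi_\h([u,v]_{B'}) = [\psi_\h(u),\psi_\h(v)]_B$ (equivalently, $\psi_\h^{-1}[u,v]_B = [\psi_\h^{-1}(u),\psi_\h^{-1}(v)]_{B'}$, using invertibility of $\psi_\h$) matches the bracket terms. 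There are no other moving parts: the combinatorial signs and index patterns are identical on the two sides because $p$ acts pointwise on the arguments.

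I do not expect any real obstacle here; the statement is a formal consequence of the two intertwining identities proven in Proposition \ref{prohomrepalg}, together with invertibility of $\psi_\h$, which is exactly what allows $\psi_\h^{-1}$ in the definition of $p$ to make sense. The only care needed is bookkeeping: writing out both sides term-by-term and checking that each summand transforms correctly under the two intertwining rules. Once $\dM_{\CE}^B \circ p = p \circ \dM_{\CE}^{B'}$ is established, the induced map $p_*: H^k(B') \to H^k(B)$ is defined by $p_*[\omega]=[p\omega]$ for any $\dM_{\CE}^{B'}$-cocycle $\omega$, and is well-defined and linear by standard homological algebra.
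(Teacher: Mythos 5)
Your proposal is correct and follows essentially the same route as the paper: both expand the Chevalley--Eilenberg differential term by term and match the representation terms via the intertwining relation $\psi_\g\circ\theta'(u)=\theta(\psi_\h(u))\circ\psi_\g$ and the bracket terms via the homomorphism property $\psi_\h([u,v]_{B'})=[\psi_\h(u),\psi_\h(v)]_B$ from Proposition \ref{prohomrepalg}, with invertibility of $\psi_\h$ making $p$ well defined. No gaps; the passage to $p_*$ on cohomology is standard, exactly as you say.
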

\begin{proof}
  For all $\omega\in C^{k}(B')$, by Proposition \ref{prohomrepalg}, we have
\begin{eqnarray*}
&&\dM_{\CE}^{B}(p(\omega))(u_1,u_2,\cdots,u_{k})\\
&=&\sum_{i=1}^{k}(-1)^{i+1}\theta(u_i)p(\omega)(u_1, \cdots, \hat{u_i}, \cdots, u_{k})\\
&&+\sum_{i<j}(-1)^{i+j}p(\omega)([u_i, u_j]_B, u_1, \cdots, \hat{u_i}, \cdots, \hat{u_j}, \cdots, u_{k})\\
&=&\sum_{i=1}^{k}(-1)^{i+1}\theta(u_i)\psi_{\g}\big(\omega(\psi_{\h}^{-1}(u_1), \cdots, \hat{\psi_{\h}^{-1}(u_i)}, \cdots, \psi_{\h}^{-1}(u_{k}))\big)\\
&&+\sum_{i<j}(-1)^{i+j}\psi_\g\Big(\omega(\psi_\h^{-1}([u_i, u_j]_B),\psi_\h^{-1}(u_1), \cdots, \hat{\psi_\h^{-1}(u_i)}, \cdots, \hat{\psi_\h^{-1}(u_j)}, \cdots, \psi_\h^{-1}(u_{k}))\Big)\\
&=&\sum_{i=1}^{k}(-1)^{i+1}\psi_{\g}\big(\theta'(\psi_\h^{-1}(u_i))\omega(\psi_{\h}^{-1}(u_1), \cdots, \hat{\psi_{\h}^{-1}(u_i)}, \cdots, \psi_{\h}^{-1}(u_{k}))\big)\\
&&+\sum_{i<j}(-1)^{i+j}\psi_\g\Big(\omega([\psi_\h^{-1}(u_i), \psi_\h^{-1}(u_j)]_{B^{'}},\psi_\h^{-1}(u_1), \cdots, \hat{\psi_\h^{-1}(u_i)}, \cdots, \hat{\psi_\h^{-1}(u_j)}, \cdots, \psi_\h^{-1}(u_{k}))\Big)\\
&=&\psi_{\g}\big(\dM_{\CE}^{B'}\omega(\psi_\h^{-1}(u_1),\cdots,\psi_\h^{-1}(u_{k}))\big)\\
&=&p(\dM_{\CE}^{B'}\omega)(u_1,\cdots,u_{k}),\quad \forall u_i\in \h.
\end{eqnarray*}
Thus $p$ is a cochain map. Consequently it  induces a homomorphism $p_*$ from   the cohomology group $H^{k}(B')$  to  $H^{k}(B)$ for all $k\geq 1$.
\end{proof}

\subsection{Cohomology theory and deformations in $\RBgh$ and modified
  $r$-matrices}
In this subsection, we show that the second cohomology group
classifies the infinitesimal deformations of  relative Rota-Baxter
operators. This may be viewed as a justification of the cohomology
theory that we established in the previous subsection. In the end, we use our cohomology theory to study the
deformations of modified $r$-matrices as an application.

\begin{defi}
Let $B:\h\lon\g$ be a relative Rota-Baxter operator on $\g$ with respect to an action $(\h; \phi)$ and $\hat{B}: \h\rightarrow\g$ be a linear map. If $B_{t}=B+t\hat{B}$ is still a relative Rota-Baxter operator on the Lie algebra $\g$ with respect to the action $(\h; \phi)$ for all $t$, we say that $\hat{B}$ generates a {\bf one-parameter infinitesimal deformation} of the relative Rota-Baxter operator $B$.
\end{defi}

 Since $B_{t}=B+t\hat{B}$ is a relative Rota-Baxter operator on the Lie algebra $\g$ with respect to the representation $(\h; \phi)$ for all $t$, we have
\begin{equation}\label{rbd-rb}
[\hat{B}(u),\hat{B}(v)]_\g=\hat{B}(\phi(\hat{B}(u))v)-\hat{B}(\phi(\hat{B}(v))u),
\end{equation}
and
\begin{eqnarray}\label{rbd-1c}
&&[\hat{B}(u),B(v)]_\g+[B(u),\hat{B}(v)]_\g\\
\nonumber&=&B(\phi(\hat{B}(u))v)+\hat{B}(\phi(B(u))v)-B(\phi(\hat{B}(v))u)-\hat{B}(\phi(B(v))u)+\hat{B}([u,v]_\h).
\end{eqnarray}
Note that equation \eqref{rbd-rb} states that $\hat{B}$ is a relative Rota-Baxter operator of wight $0$ on the Lie algebra $\g$ with respect to the representation $(\h; \phi)$. The equation \eqref{rbd-1c} states that $\hat{B}$ is a $2$-cocycle of the relative Rota-Baxter operator $B$.

\begin{defi}
Let $B: \h\rightarrow\g$ be a relative Rota-Baxter operator  on $\g$ with respect to   $(\h;\phi)$. Two one-parameter infinitesimal deformations $B_t^1=B+t\hat{B}_1$ and $B_t^2=B+t\hat{B}_2$ are said to be {\bf equivalent} if there exists an $x\in\g$ such that $(\Id_\g+t\ad_x, \Id_\h+t\phi(x))$ is a homomorphism from $B_t^1$ to $B_t^2$. In particular, a one-parameter infinitesimal deformation $B_t$ of a relative Rota-Baxter operator $B$ is said to be {\bf trivial} if there exists an $x\in\g$ such that $(\Id_\g+t\ad_x, \Id_\h+t\phi(x))$ is a homomorphism from $B_t$ to $B$.
\end{defi}

Let $(\Id_\g+t\ad_x, \Id_\h+t\phi(x))$ be a  homomorphism from $B_t^1$ to $B_t^2$. Then we have
\begin{equation*}
(\Id_\g+t\ad_x)(B+t\hat{B}_1)u=(B+t\hat{B}_2)(\Id_\h+t\phi(x))u, \quad \forall u\in\h,
\end{equation*}
which implies
\begin{eqnarray}\label{sameclass}
\nonumber[x, \hat{B}_1(u)]_\g&=&\hat{B}_2(\phi(x)u),\\
\hat{B}_1(u)-\hat{B}_2(u)&=&B(\phi(x)u)+[B(u), x]_\g \quad \forall x\in\g, u\in\h.
\end{eqnarray}

By \eqref{sameclass}, we have

\begin{equation*}
\hat{B}_1-\hat{B}_2=\dM_{\CE}^{B}x,
\end{equation*}
where $\dM_{\CE}^{B}$ is given by \eqref{eq:dce}. Thus we have

\begin{thm}\label{thm:rb-deform}
Let $B: \h\rightarrow\g$ be a relative Rota-Baxter operator  on $\g$ with respect to   $(\h;\phi)$. If two one-parameter infinitesimal deformations $B_t^1=B+t\hat{B}_1$ and $B_t^2=B+t\hat{B}_2$ are equivalent, then $\hat{B}_1$ and $\hat{B}_2$ are in the same cohomology class of $H^{2}(B)$ defined in Definition \ref{deficorbal}.
\end{thm}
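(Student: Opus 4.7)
The plan is to exploit the definition of equivalence to produce an explicit $1$-cochain whose coboundary equals $\hat{B}_{1}-\hat{B}_{2}$. Since $1$-cochains in $C^{*}(B)$ are just elements of $\g$, the only natural candidate is the element $x\in\g$ supplied by the hypothesis that $(\Id_{\g}+t\ad_{x},\,\Id_{\h}+t\phi(x))$ is a homomorphism from $B_{t}^{1}$ to $B_{t}^{2}$. The proof will therefore reduce to checking that the coboundary of this $x$ is exactly the difference of the two infinitesimal deformations.

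First I would apply the intertwining relation \eqref{hom-rbo1} to $B_t^1$ and $B_t^2$, giving $(\Id_{\g}+t\ad_{x})\circ B_{t}^{1}=B_{t}^{2}\circ(\Id_{\h}+t\phi(x))$, and then expand and collect the coefficient of $t^{1}$ on both sides. The $t^{0}$ coefficient reduces to the trivial identity $B=B$, while the $t^{1}$ coefficient, evaluated at an arbitrary $u\in\h$, yields precisely the relation \eqref{sameclass} already displayed before the theorem, namely
\begin{equation*}
\hat{B}_{1}(u)-\hat{B}_{2}(u)=B(\phi(x)u)+[B(u),x]_{\g}.
\end{equation*}
The key observation is that the right-hand side is nothing but $\dM_{\CE}^{B}x$: specializing \eqref{eq:dce} to $k=1$ and substituting the formula \eqref{lierepT} for the descendent representation $\theta$, one reads off $\dM_{\CE}^{B}x(u)=\theta(u)x=B(\phi(x)u)+[B(u),x]_{\g}$. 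Consequently $\hat{B}_{1}-\hat{B}_{2}=\dM_{\CE}^{B}x$, which is exactly the statement that $[\hat{B}_{1}]=[\hat{B}_{2}]$ in $H^{2}(B)$.

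The only delicate point is to verify that the second homomorphism condition \eqref{hom-rbo2} does not impose any further constraint on $x$. Its $t^{1}$ coefficient reads $\phi(x)\phi(y)v=\phi(y)\phi(x)v+\phi([x,y]_{\g})v$ for all $y\in\g$ and $v\in\h$, which is automatic from the assumption that $\phi:\g\to\Der(\h)$ is a Lie algebra morphism; thus no cocycle condition on $x$ is created. There is no serious obstacle in this argument: the main content is the recognition of the $t^{1}$-coefficient of the equivalence with the coboundary formula, and this identification is immediate from \eqref{lierepT} together with the $k=1$ instance of \eqref{eq:dce}.
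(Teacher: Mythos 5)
Your proposal is correct and follows exactly the paper's own argument: the paper likewise expands the intertwining relation $(\Id_\g+t\ad_x)\circ B_t^1=B_t^2\circ(\Id_\h+t\phi(x))$, extracts the $t^1$-coefficient to obtain $\hat{B}_1(u)-\hat{B}_2(u)=B(\phi(x)u)+[B(u),x]_\g$, and identifies the right-hand side with $\theta(u)x=\dM_{\CE}^{B}x(u)$ via \eqref{lierepT} and the $k=1$ case of \eqref{eq:dce}. Your additional check that \eqref{hom-rbo2} imposes no extra condition on $x$ is a harmless (and correct) bonus the paper omits.
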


Since modified $r$-matrices and Rota-Baxter operators of weight $1$
are related via the formula $R=\Id+2B$ (see Remark \ref{rmk:mybe}), it
follows that our cohomology theory can also control the infinitesimal
deformations of modified $r$-matrices. More details will be provided
in a future work.
\begin{defi}
Given a Lie algebra $\g$, let $R$ be a modified $r$-matrix satisfying
Eq. \eqref{eq:mybe}. If $R_t=R+t\hat{R}$ is still a modified $r$-matrix for
all $t$, we say that $\hat{R}$ generates a {\bf one-parameter
  infinitesimal deformation} of $R$.
\end{defi}

\begin{defi}
Let   $R_t^1=R+t\hat{R}_1$ and $R_t^2=R+t\hat{R}_2$ be two infinitesimal deformations of a modified $r$-matrix $R$. They are said to be {\bf equivalent} if there exists an $x\in\g$ such that $\Id_\g+t\ad_x$ is a homomorphism from $R_t^1$ to $R_t^2$, i.e. $\Id_\g+t\ad_x$ is a Lie algebra endomorphism and $$(\Id_\g+t\ad_x)\circ R_t^1=R_t^2\circ(\Id_\g+t\ad_x).$$
\end{defi}

\begin{thm}
 Let   $R_t^1=R+t\hat{R}_1$ and $R_t^2=R+t\hat{R}_2$ be two equivalent infinitesimal deformations of a modified $r$-matrix $R$. Then $\hat{R}_1$ and $\hat{R}_2$ are in the same cohomology class of $H^{2}(B)$ defined in Definition \ref{deficorbal} for $B=\frac{1}{2}(R-\Id)$.
\end{thm}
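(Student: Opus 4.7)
The plan is to reduce the statement to Theorem \ref{thm:rb-deform} via the bijection $R = \Id + 2B$ between modified $r$-matrices and Rota-Baxter operators of weight $1$ recalled in Remark \ref{rmk:mybe}. Setting $\hat{B}_i := \tfrac{1}{2}\hat{R}_i$, I observe that $B_t^i := B + t\hat{B}_i = \tfrac{1}{2}(R_t^i - \Id)$ is, for each $t$, a Rota-Baxter operator of weight $1$ on $\g$ (taking $\h = \g$ and $\phi = \ad$), hence constitutes a one-parameter infinitesimal deformation of $B$.

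Next I would translate the equivalence of $R_t^1$ and $R_t^2$ into an equivalence of $B_t^1$ and $B_t^2$. Given $x \in \g$ witnessing the equivalence on the $R$-side, substitute $R_t^i = \Id + 2 B_t^i$ into $(\Id_\g + t\ad_x) \circ R_t^1 = R_t^2 \circ (\Id_\g + t\ad_x)$, cancel the common $\Id_\g + t\ad_x$ terms, and divide by $2$ to obtain $(\Id_\g + t\ad_x) \circ B_t^1 = B_t^2 \circ (\Id_\g + t\ad_x)$, which is precisely \eqref{hom-rbo1} with $\psi_\g = \psi_\h = \Id_\g + t\ad_x$. Because $\phi = \ad$, the compatibility \eqref{hom-rbo2} amounts to the statement that $\Id_\g + t\ad_x$ is a Lie algebra endomorphism, which is part of the hypothesis. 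Hence $(\Id_\g + t\ad_x, \Id_\g + t\ad_x)$ is a homomorphism from $B_t^1$ to $B_t^2$ in the sense of the previous subsection.

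At this point I would invoke Theorem \ref{thm:rb-deform} to conclude that $\hat{B}_1 - \hat{B}_2 = \dM_{\CE}^{B} y$ for some $y \in C^1(B) = \g$ (tracing through the proof, in fact $y = x$). Multiplying by $2$ yields $\hat{R}_1 - \hat{R}_2 = \dM_{\CE}^{B}(2y)$, so the two $2$-cochains differ by a coboundary and represent the same class in $H^2(B)$. As a sanity check, one may verify directly that $\hat{R}_1 - \hat{R}_2 = R\ad_x - \ad_x R$ (extracted as the $t^1$-coefficient of the $R$-equivalence) coincides with $\dM_{\CE}^{B}(2x)$, using $\dM_{\CE}^{B}(2x)(u) = 2B(\ad_x u) + 2[B(u),x]_\g = R[x,u] - [x,Ru]$ after substituting $R = \Id + 2B$.

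The only mild obstacle is the bookkeeping: one must be careful whether the equivalence is interpreted as a polynomial identity in $t$ (working modulo $t^2$) or as an honest identity for all $t$, since $\Id_\g + t\ad_x$ is not in general a Lie algebra endomorphism to all orders. The preceding subsection implicitly takes the infinitesimal (first-order) interpretation, as is seen from how \eqref{sameclass} is extracted, and the translation between the $R$-side and the $B$-side respects this interpretation order by order; once this convention is fixed, the argument above is routine.
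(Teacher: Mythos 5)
Your proposal is correct and follows essentially the same route as the paper: both arguments reduce the statement to Theorem \ref{thm:rb-deform} by passing through the correspondence $R=\Id+2B$, translating the equivalence of the $R$-deformations into an equivalence of the corresponding Rota-Baxter deformations $B_t^i=B+\tfrac{t}{2}\hat{R}_i$. Your write-up is merely more explicit than the paper's (which asserts the translation as obvious), and your direct verification that $\hat R_1-\hat R_2=R\ad_x-\ad_xR=\dM_{\CE}^{B}(2x)$ is a correct consistency check.
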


\begin{proof}
  It is obvious that  an infinitesimal deformation of a Rota-Baxter
  operator $B_t=B+t\hat{B}$ gives rise to an infinitesimal deformation
  of a modified $r$-matrix $R_t=R+t(2\hat{B})$, with
  $R=\Id+2B$. Conversely, an infinitesimal deformation of a modified
  $r$-matrix $R_t=R+t\hat{R}$ gives rise to an infinitesimal
  deformation of a Rota-Baxter operator $B_t=B+t(\frac{\hat{R}}{2})$,
  with $B=\frac{1}{2}(R-\Id)$. Therefore $\hat{R}$ is a
  2-cocycle in $(C^*(B), \dM^B_{\CE})$ by Theorem \ref{thm:rb-deform}. Similarly,
  $R_t^1=R+t\hat{R}_1$ and $R_t^2=R+t\hat{R}_2$ are equivalent
  infinitesimal deformations if and only if their corresponding
  Rota-Baxter operators are equivalent. Therefore by Theorem \ref{thm:rb-deform}, both $\hat{R}_1$ and
  $\hat{R}_2$ are cocycles in  $(C^*(B), \dM^B_{\CE})$ and they are differed by
  a coboundary therein.
\end{proof}

\emptycomment{
1. Calculation: $\hat{R}$ must be a usual $r$-matrix, and it corresponds
to a weight 0 RB operator $\hat{B}$ which is a 2-cocycle in $H^2(B)$,
where $R=\Id + 2B$.  \\
2. the second cohomology group given in Definition \ref{deficorbal} for a Rota-Baxter operator $B$ of weight $1$ can also control infinitesimal deformations of the  modified $r$-matrix  $R=\Id+2B$.}

\section{Relative Rota-Baxter operators on Lie groups and their
  cohomology theory}\label{sec:grp}

In this section, we introduce the cohomology theory of relative Rota-Baxter operators on Lie groups. In the sequel, $(G, e_G, \cdot_G)$ and $(H, e_H, \cdot_H)$ are always Lie groups.

\subsection{The category $\RBGH$ of relative Rota-Baxter operators on
  Lie groups}
\begin{defi} \label{defi:rb-lie-group}
Let  $\Phi: G\rightarrow \Aut(H)$ be an action of $G$ on $H$. A smooth map $\huaB: H\rightarrow G$ is called a {\bf relative Rota-Baxter operator} if
\begin{equation}\label{rRBg}
\huaB(h_1)\cdot_{G}\huaB(h_2)=\huaB(h_1\cdot_{H}\Phi(\huaB(h_1))h_2), \quad \forall h_1, h_2\in H.
\end{equation}
\end{defi}

In particular if the action $\Phi$ is the adjoint action $\Ad$ of the Lie group $G$ on itself,   it reduces to a {\bf   Rota-Baxter operator} introduced in \cite{GLS}.

%We first give a couple of examples of Rota-Baxter operators on a Lie group $G$.
%\begin{ex}{\rm (\cite{GLS})}
%Let $G$ be a Lie group. Then the operator $g\longmapsto g^{-1}$ is a  Rota-Baxter operator on $G$.
%\end{ex}

\begin{ex}\label{lem:RBPG}{\rm (\cite{GLS})}
Let $G$ be a Lie group and $G_{+},G_{-}$ be two subgroups such that $G=G_{+} G_{-}$ and $G_+\cap G_-=\{e\}$. Define $\huaB:G\to G$ by
$$\huaB(g)=g_{-}^{-1},\quad \forall~ g=g_{+}g_{-},\quad \mbox{where } g_{+}\in G_{+},g_-\in G_-.$$ Then $\huaB$ is a Rota-Baxter operator.
\end{ex}

%We refer to \cite{GLS} for more such examples, and now we turn to  examples of relative Rota-Baxter operators which are not Rota-Baxter operators.

Let $G$ be a connected compact real Lie group and $(\mathbb{R}, \Phi)$ be a one-dimensional representation. Thus for any $g\in G$, it follows that $|\Phi(g)|=1$. Moreover, $G$ is a connected Lie group, thus $\Phi(g)=1$ for any $g\in G$. Thus a smooth map $\huaB: \mathbb{R}\lon G$ is a relative Rota-Baxter operator with respect to the action $\Phi$ if and only if $\huaB$ is a group homomorphism from $\mathbb{R}$ to $G$.
This sort of phenomenon also extends to the non-compact case in the following example:
\begin{ex}
Let $(G, e_G, \cdot_G)=(\mathrm{UP}(2;\mathbb{R}), I, \cdot)$, where $\mathrm{UP}(2;\mathbb{R})$ is the set of invertible upper triangular matrices and
$H=\mathbb R$.
Define $\huaB: \mathbb R\lon \mathrm{UP}(2;\mathbb{R})$ and $\Phi: \mathrm{UP}(2;\mathbb{R})\lon \Aut(\mathbb R)$ by
\begin{equation*}
\huaB(r)
=\left(
    \begin{array}{cc}
      1 & r \\
      0 & 1 \\
    \end{array}
  \right), \quad \Phi(\left(
    \begin{array}{cc}
      a & b \\
      0 & c \\
    \end{array}
  \right))r=ar, \quad \forall \left(
    \begin{array}{cc}
      a & b \\
      0 & c \\
    \end{array}
  \right)\in\mathrm{UP}(2;\mathbb{R}), r\in \mathbb R.
\end{equation*}
Then $\huaB$ is a relative Rota-Baxter operator on the Lie group $G$ with respect to the action $(\mathbb R; \Phi)$.
\end{ex}

Given an action $\Phi:G\to \Aut(H)$, we have the semi-direct product Lie group $G\ltimes_\Phi H$, with
multiplication $\cdot_\Phi$ given by
\begin{equation*}
(g_1,h_1)\cdot_{\Phi}(g_2,h_2)=(g_1\cdot_Gg_2, h_1\cdot_H\Phi(g_1)h_2), \quad \forall g_i\in G, h_i\in H, i=1,2.
\end{equation*}

\begin{pro}\label{graphgroup}
Let  $\Phi: G\to\Aut(H)$ be an action of a Lie group $G$ on a Lie group $ H$. Then a smooth map $\huaB: H\rightarrow G$ is a relative Rota-Baxter operator if and only if the graph   $Gr(\huaB)=\{(\huaB(h),h)|h\in H\}$ is a Lie subgroup of the  Lie group $G\ltimes_\Phi H$.
\end{pro}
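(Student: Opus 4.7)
The plan is to mirror the strategy of Proposition \ref{graphro} in the group setting. The core computation is direct: for any $h_1, h_2\in H$, using the definition of the semi-direct product multiplication on $G\ltimes_\Phi H$,
\begin{equation*}
(\huaB(h_1), h_1)\cdot_\Phi (\huaB(h_2), h_2)=\bigl(\huaB(h_1)\cdot_G \huaB(h_2),\; h_1\cdot_H \Phi(\huaB(h_1))h_2\bigr).
\end{equation*}
The right hand side lies in $Gr(\huaB)$ precisely when its first component equals $\huaB$ applied to its second component, i.e.\ exactly when \eqref{rRBg} holds. This single identity simultaneously proves the forward implication (closure under multiplication of $Gr(\huaB)$ forces \eqref{rRBg}) and is the starting point for the converse.

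For the converse, I would check the three Lie subgroup axioms in turn. First, $Gr(\huaB)$ is smooth because $\huaB$ is smooth, so the graph is an embedded submanifold diffeomorphic to $H$. Second, setting $h_1=h_2=e_H$ in \eqref{rRBg} yields $\huaB(e_H)\cdot_G \huaB(e_H)=\huaB(e_H)$, hence $\huaB(e_H)=e_G$ and the identity of $G\ltimes_\Phi H$ belongs to $Gr(\huaB)$. Third, closure under inversion: the inverse of $(\huaB(h),h)$ in $G\ltimes_\Phi H$ is $(\huaB(h)^{-1},\Phi(\huaB(h)^{-1})h^{-1})$, so one must check $\huaB\bigl(\Phi(\huaB(h)^{-1})h^{-1}\bigr)=\huaB(h)^{-1}$.

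The only step requiring a small computation is verifying this inversion identity. Applying \eqref{rRBg} with $h_1=h$ and $h_2=\Phi(\huaB(h)^{-1})h^{-1}$ gives
\begin{equation*}
\huaB(h)\cdot_G \huaB\bigl(\Phi(\huaB(h)^{-1})h^{-1}\bigr)=\huaB\bigl(h\cdot_H \Phi(\huaB(h))\Phi(\huaB(h)^{-1})h^{-1}\bigr)=\huaB(e_H)=e_G,
\end{equation*}
which gives the desired formula. Combined with closure under multiplication from the main computation, this makes $Gr(\huaB)$ a Lie subgroup of $G\ltimes_\Phi H$.

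I expect no serious obstacle: both directions reduce to unwinding the semi-direct product law, exactly as in the Lie algebra proof. The one subtlety absent from the algebraic version is the inversion check (since groups, unlike Lie algebras, have a separate inversion axiom), but as shown it follows cleanly from a single application of \eqref{rRBg}. A pleasant byproduct of this proposition will be, by transporting the group structure on $Gr(\huaB)$ along the diffeomorphism $h\mapsto (\huaB(h),h)$, the descendent Lie group structure $\star$ on $H$ with $h_1\star h_2=h_1\cdot_H \Phi(\huaB(h_1))h_2$, which parallels Corollary \ref{newliealg} and will presumably be formalized immediately afterwards.
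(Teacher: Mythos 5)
Your proof is correct and follows essentially the same route as the paper's: the single computation of $(\huaB(h_1),h_1)\cdot_\Phi(\huaB(h_2),h_2)$ unwinding the semi-direct product law is exactly the paper's argument. The only difference is that you additionally verify the identity element and closure under inversion, which the paper leaves implicit at this point (the inversion formula $h^\dag=\Phi(\huaB(h)^{-1})h^{-1}$ is only derived later, in Proposition \ref{pro:dec-gp}); your extra checks are correct and make the argument more complete.
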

\begin{proof}
Let $\huaB: H\rightarrow G$ be a smooth map.
For all $h_1,h_2\in H,$  we have
\begin{eqnarray*}
(\huaB(h_1),h_1)\cdot_{\Phi}(\huaB(h_2),h_2)=(\huaB(h_1)\cdot_G\huaB(h_2), h_1\cdot_H\Phi(\huaB(h_1))h_2),
\end{eqnarray*}
which implies that the graph $Gr(\huaB)$ is a Lie subgroup of the  Lie group $G\ltimes_\Phi H$ if and only if $\huaB$ satisfies
\begin{eqnarray*}
\huaB(h_1)\cdot_G\huaB(h_2)=\huaB( h_1\cdot_H\Phi(\huaB(h_1))h_2),
\end{eqnarray*}
which means that $\huaB$ is a relative Rota-Baxter operator.
\end{proof}

For any $g\in G$, we have $\Phi(g)e_{H}=\Phi(g)(e_{H}\cdot_{H}e_{H})=\Phi(g)e_{H}\cdot_{H}\Phi(g)e_{H}$, thus $\Phi(g)e_{H}=e_{H}$.

\begin{pro}\label{pro:dec-gp}
Let $\huaB: H\rightarrow G$ be  a relative Rota-Baxter operator on $G$ with respect to an action $(H;\Phi)$.  Then $(H, e_H, \star)$ is a Lie group, called {\bf the descendent Lie group of $\huaB$} and denoted by $H_\huaB$, where
\begin{equation}\label{group}
h_1\star h_2=h_1\cdot_H\Phi(\huaB(h_1))h_2, \quad \forall h_1, h_2\in H.
\end{equation}
For $h\in H$, its inverse $h^\dag$ with respect to the multiplication $\star$ is given by
\begin{equation} \label{eq:dag}
h^\dag=\Phi(\huaB(h)^{-1})h^{-1},
\end{equation}
where $h^{-1}$ is the inverse element of $h$ in $(H, e_H, \cdot_H)$.

Moreover, $\huaB: (H, e_H, \star)\rightarrow (G, e_G, \cdot_{G})$ is a Lie group homomorphism.
\end{pro}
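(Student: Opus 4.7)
The plan is to transport the Lie group structure from the graph $Gr(\huaB)$ to $H$ and then read off the explicit formulas for the identity, inverse and homomorphism property. First I would invoke Proposition \ref{graphgroup} to conclude that $Gr(\huaB)$ is a Lie subgroup of $G\ltimes_{\Phi}H$. Since $\huaB$ is smooth, the map $\iota:H\to Gr(\huaB)$, $h\mapsto(\huaB(h),h)$, is a diffeomorphism with smooth inverse given by the projection $\pi_H:Gr(\huaB)\to H$. Transporting the Lie group structure from $Gr(\huaB)$ through $\iota$ equips $H$ with a Lie group structure whose multiplication, read off from
$$(\huaB(h_1),h_1)\cdot_{\Phi}(\huaB(h_2),h_2)=\bigl(\huaB(h_1)\cdot_G\huaB(h_2),\,h_1\cdot_H\Phi(\huaB(h_1))h_2\bigr)$$
by projecting onto the second factor, coincides with the operation $\star$ of \eqref{group}.

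Next I would pin down the identity and the inverse. Setting $h_1=h_2=e_H$ in \eqref{rRBg} and using $\Phi(g)e_H=e_H$ noted just before the statement, we obtain $\huaB(e_H)\cdot_G\huaB(e_H)=\huaB(e_H)$, so $\huaB(e_H)=e_G$; consequently $e_H\star h=\Phi(e_G)h=h$ and $h\star e_H=h\cdot_H e_H=h$, confirming $e_H$ is the $\star$-identity. For the inverse, the standard formula in a semi-direct product gives
$$(\huaB(h),h)^{-1}=\bigl(\huaB(h)^{-1},\,\Phi(\huaB(h)^{-1})h^{-1}\bigr),$$
and since $Gr(\huaB)$ is closed under inversion this element again lies in $Gr(\huaB)$. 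Projecting to $H$ then yields $h^{\dag}=\Phi(\huaB(h)^{-1})h^{-1}$, while reading the first coordinate gives, as a byproduct, the identity $\huaB(h^{\dag})=\huaB(h)^{-1}$.

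Finally, the homomorphism claim is essentially a restatement of \eqref{rRBg}: by the definition of $\star$ and the relative Rota-Baxter condition,
$$\huaB(h_1\star h_2)=\huaB\bigl(h_1\cdot_H\Phi(\huaB(h_1))h_2\bigr)=\huaB(h_1)\cdot_G\huaB(h_2).$$
I do not anticipate a serious obstacle. The only point that deserves care is verifying that the transported multiplication really is $\star$ (and not some conjugate variant), but this is settled by the direct semi-direct product computation above. A pedestrian alternative would verify associativity of $\star$ by hand, which would require the cocycle-type relation $\Phi(\huaB(h_1\star h_2))=\Phi(\huaB(h_1))\Phi(\huaB(h_2))$ obtained from the homomorphism property; the graph route bypasses this and is the one I would follow.
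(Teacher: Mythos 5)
Your proposal is correct, but it takes a genuinely different route from the paper's proof. The paper verifies the group axioms for $\star$ by hand: it checks the unit directly, establishes $\huaB(h)^{-1}=\huaB(\Phi(\huaB(h)^{-1})h^{-1})$ from \eqref{rRBg} and then computes $h^{\dag}\star h=e_H$ explicitly (the computation \eqref{eq:proof-inv}), and proves associativity by the direct calculation \eqref{eq:proof-ass} — exactly the ``pedestrian alternative'' you mention, except that associativity there does not need the cocycle relation as a separate lemma, since \eqref{rRBg} is invoked inside the computation. Your transport-of-structure argument via $\iota:h\mapsto(\huaB(h),h)$ is cleaner: associativity and smoothness come for free from $G\ltimes_{\Phi}H$, and you get $\huaB(h^{\dag})=\huaB(h)^{-1}$ as a byproduct, a fact the paper has to rederive later. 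What the paper's hands-on computations buy is reusability: \eqref{eq:proof-ass} and \eqref{eq:proof-inv} are cited verbatim in the proof of Theorem \ref{thm:local-des} for \emph{local} Rota-Baxter operators, where the graph is only locally a group and the transport argument would need extra care. One small caveat about your version: you lean on $Gr(\huaB)$ being closed under inversion, which is part of the \emph{statement} of Proposition \ref{graphgroup}, but the proof given there only checks closure under multiplication. This is easily patched without circularity — taking $h_1=h$ and $h_2=\Phi(\huaB(h)^{-1})h^{-1}$ in \eqref{rRBg} gives $h_1\cdot_H\Phi(\huaB(h_1))h_2=e_H$ and hence $\huaB(h)\cdot_G\huaB(\Phi(\huaB(h)^{-1})h^{-1})=e_G$, which shows the candidate inverse does lie in the graph — but you should say so rather than cite subgroup closure outright.
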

\begin{proof}
It is obvious that $e_H\star h=\Phi(e_H)h=h$ and $h\star e_H=h\cdot_H\Phi(\huaB(h))e_H=h$. Thus $e_{H}$ is the unit   in $(H, e_{H}, \star)$.

For any $h\in H$, by \eqref{rRBg}, we have $\huaB(h)^{-1}=\huaB(\Phi(\huaB(h)^{-1})h^{-1})$. By \eqref{group}, we have
\begin{eqnarray}
\label{eq:proof-inv}\Phi(\huaB(h)^{-1})h^{-1}\star h&=&\Phi(\huaB(h)^{-1})h^{-1}\cdot_H\Phi(\huaB(\Phi(\huaB(h)^{-1})h^{-1}))h\\
\nonumber&=&\Phi(\huaB(\Phi(\huaB(h)^{-1})h^{-1}))h^{-1}\cdot_H\Phi(\huaB(\Phi(\huaB(h)^{-1})h^{-1}))h\\
\nonumber&=&\Phi(\huaB(\Phi(\huaB(h)^{-1})h^{-1}))(h^{-1}\cdot_{H}h)\\
\nonumber&=&e_{H}.
\end{eqnarray}
In a similar way, we have $h\star\Phi(\huaB(h)^{-1})h^{-1}=e_H.$ Thus $\Phi(\huaB(h)^{-1})h^{-1}$ is the inverse element of $h$ in $(H, e_H, \star)$.

Finally, we prove that $\star$ is associative. For all $h_1, h_2$ and $h_3\in H$, by \eqref{rRBg} and \eqref{group}, we have
\begin{eqnarray}
\label{eq:proof-ass}(h_{1}\star h_{2})\star h_{3}&=&(h_{1}\cdot_H\Phi(\huaB(h_{1}))h_{2})\star h_3\\
\nonumber&=&h_{1}\cdot_H\Phi(\huaB(h_{1}))h_{2}\cdot_H\Phi(\huaB(h_{1}\cdot_H\Phi(\huaB(h_{1}))h_{2}))h_3\\
\nonumber&=&h_{1}\cdot_H\Phi(\huaB(h_{1}))h_{2}\cdot_H\Phi(\huaB(h_1)\cdot_G\huaB(h_2))h_3\\
\nonumber&=&h_{1}\cdot_H\Phi(\huaB(h_1))(h_{2}\cdot_H\Phi(\huaB(h_2))h_3)\\
\nonumber&=&h_1\star(h_2\star h_3).
\end{eqnarray}
Thus $(H, e_H, \star)$ is a Lie group. By \eqref{rRBg} and $\huaB(e_H)=e_G$, $\huaB$ is a Lie group homomorphism from $(H, e_H, \star)$ to $(G, e_G, \cdot_G)$.
\end{proof}

\begin{defi}
Let $\huaB$ and $\huaB'$ be relative Rota-Baxter operators on a Lie group $G$ with respect to an action $(H;\Phi)$. A {\bf homomorphism} from $\huaB'$ to $\huaB$ consists of a Lie group homomorphism $\Psi_G: G\lon G$ and a Lie group homomorphism $\Psi_H: H\lon H$ such that
\begin{eqnarray}
\label{hom-rbo-gp1}\huaB\circ\Psi_H&=&\Psi_G\circ\huaB',\\
\label{hom-rbo-gp2}\Psi_H(\Phi(g)h)&=&\Phi(\Psi_G(g))\Psi_H(h), \quad \forall g\in G, h\in H.
\end{eqnarray}
In particular, if both $\Psi_G$ and $\Psi_H$ are invertible, $(\Psi_G, \Psi_H)$ is called an isomorphism from $\huaB'$ to $\huaB$.
\end{defi}
In fact, \eqref{hom-rbo-gp2} is equivalent to the fact that $(\Psi_G,\Psi_H)$ is an endomorphism of the Lie group $G\ltimes_\Phi H$.

It is clear that relative Rota-Baxter operators on a Lie group $G$
with respect to an action $(H;\Phi)$ together with
homomorphisms between them form a category, which we denote by
$\RBGH$.

\subsection{A cohomology theory in $\RBGH$}

To establish a parallel cohomology theory in $\RBGH$, we need to  first
find an action of the descendent Lie group on the vector space $\g$.

\begin{lem}\label{actionlem}
Let  $\Phi: G\rightarrow \Aut(H)$ be an action of $G$ on $H$. Then for all $h_1, h_2\in H$, we have
\begin{equation*}
\Phi(g)(h_1\star h_2)=\Phi(g)h_1\cdot_{H}\Phi(g\cdot_G\huaB(h_1))h_2.
\end{equation*}
\end{lem}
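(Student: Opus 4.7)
The plan is to chase the definitions: unfold $\star$, then use the two structural facts about $\Phi$, namely that each $\Phi(g)$ is an automorphism of $H$ (so it distributes over $\cdot_H$) and that $\Phi$ itself is a group homomorphism from $G$ to $\Aut(H)$ (so it converts $\cdot_G$ into composition in $\Aut(H)$).

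More precisely, first I would rewrite the left-hand side using \eqref{group} as
\[
\Phi(g)(h_1 \star h_2) \;=\; \Phi(g)\bigl(h_1 \cdot_H \Phi(\huaB(h_1))h_2\bigr).
\]
Next, since $\Phi(g) \in \Aut(H)$ is in particular a group homomorphism of $H$, it distributes across $\cdot_H$, giving
\[
\Phi(g)(h_1 \star h_2) \;=\; \Phi(g)h_1 \;\cdot_H\; \Phi(g)\bigl(\Phi(\huaB(h_1))h_2\bigr).
\]
Finally, because $\Phi \colon G \to \Aut(H)$ is a homomorphism of Lie groups, we have $\Phi(g) \circ \Phi(\huaB(h_1)) = \Phi(g \cdot_G \huaB(h_1))$, which yields the desired identity
\[
\Phi(g)(h_1 \star h_2) \;=\; \Phi(g)h_1 \;\cdot_H\; \Phi(g \cdot_G \huaB(h_1))h_2.
\]

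There is no real obstacle here: the statement is a purely formal consequence of $\Phi$ being an action. The only mild subtlety is recognizing that one needs both aspects of the word \emph{action}, the homomorphism property in the $G$-variable and the automorphism property in the $H$-variable; neither fact about $\huaB$ (such as the Rota-Baxter identity) is actually used in this step. The lemma plays its role later, when one compares the way $\Phi$ interacts with the descendent multiplication $\star$ and thereby produces a well-defined action of the descendent Lie group on $\g$ by differentiation.
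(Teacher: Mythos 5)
Your proof is correct and is exactly the ``straightforward'' computation the paper has in mind (the paper's own proof consists solely of the sentence ``It is straightforward''): unfold $\star$ via \eqref{group}, use that $\Phi(g)\in\Aut(H)$ distributes over $\cdot_H$, and use that $\Phi$ is a homomorphism to combine $\Phi(g)\circ\Phi(\huaB(h_1))$ into $\Phi(g\cdot_G\huaB(h_1))$. Your closing remark that only the definition of $\star$ is used, and not the Rota--Baxter identity itself, is accurate.
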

\begin{proof}
It is straightforward.
\end{proof}

\begin{thm}\label{groupact}
Let  $\huaB$ be a relative Rota-Baxter operator on $G$ with respect to an action $(H;\Phi)$. Define $\Theta: H\rightarrow \text{Diff}(G)$ by
\begin{equation}\label{eqactionH}
\Theta(h)g=(\huaB(\Phi(g)h^{\dag}))^{-1}\cdot_{G}g\cdot_{G}\huaB(h^{\dag}), \quad \forall g\in G, h\in H.
\end{equation}
Then $\Theta$ is an action of the descendent Lie group $(H, e_{H}, \star)$ on the manifold $G$.
\end{thm}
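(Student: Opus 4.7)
The plan is to verify the two axioms of a left action: (i) $\Theta(e_H) = \Id_G$ and (ii) $\Theta(h_1 \star h_2) = \Theta(h_1) \circ \Theta(h_2)$ for all $h_1, h_2 \in H$. The smoothness of $g \mapsto \Theta(h)g$ is built into the formula \eqref{eqactionH} since $\huaB$ is smooth and $\Phi$ is a smooth action, and once (ii) holds each $\Theta(h)$ will automatically be a diffeomorphism of $G$ with inverse $\Theta(h^\dag)$. Axiom (i) is immediate from $e_H^\dag = e_H$, $\Phi(g)e_H = e_H$ and $\huaB(e_H) = e_G$, which reduce \eqref{eqactionH} to $\Theta(e_H) g = e_G^{-1}\cdot_G g \cdot_G e_G = g$.

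For (ii) I first collect three ingredients. By Proposition \ref{pro:dec-gp}, $\huaB$ is a group homomorphism from $(H,\star)$ to $(G,\cdot_G)$, so $(h_1\star h_2)^\dag = h_2^\dag \star h_1^\dag$ and $\huaB\bigl((h_1\star h_2)^\dag\bigr) = \huaB(h_2^\dag)\cdot_G \huaB(h_1^\dag)$. In addition, Lemma \ref{actionlem} gives
$$
\Phi(g)(h_2^\dag \star h_1^\dag) \;=\; \Phi(g)h_2^\dag \,\cdot_H\, \Phi\bigl(g\cdot_G \huaB(h_2^\dag)\bigr)h_1^\dag.
$$

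The key technical step is to recast this identity so that the defining relation \eqref{rRBg} can be applied to the two factors on the right. To this end I rewrite
$$
g\cdot_G \huaB(h_2^\dag) \;=\; \huaB(\Phi(g)h_2^\dag)\cdot_G \bigl(\huaB(\Phi(g)h_2^\dag)^{-1}\cdot_G g\cdot_G \huaB(h_2^\dag)\bigr) \;=\; \huaB(\Phi(g)h_2^\dag)\cdot_G \Theta(h_2)g,
$$
where the second equality is exactly the definition of $\Theta(h_2)g$. Applying $\Phi$ then identifies the second factor above as $\Phi\bigl(\huaB(\Phi(g)h_2^\dag)\bigr)\,\Phi(\Theta(h_2)g)h_1^\dag$, so \eqref{rRBg} applied to the pair $\Phi(g)h_2^\dag$ and $\Phi(\Theta(h_2)g)h_1^\dag$ yields the clean factorisation
$$
\huaB\bigl(\Phi(g)(h_2^\dag \star h_1^\dag)\bigr) \;=\; \huaB(\Phi(g)h_2^\dag)\cdot_G \huaB\bigl(\Phi(\Theta(h_2)g)h_1^\dag\bigr).
$$

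Substituting this together with $\huaB\bigl((h_1\star h_2)^\dag\bigr) = \huaB(h_2^\dag)\cdot_G \huaB(h_1^\dag)$ into \eqref{eqactionH} for $\Theta(h_1\star h_2)g$, the middle block $\huaB(\Phi(g)h_2^\dag)^{-1}\cdot_G g \cdot_G \huaB(h_2^\dag)$ again collapses to $\Theta(h_2)g$, and the outer factors assemble into $\Theta(h_1)(\Theta(h_2)g)$, establishing (ii). The only genuine obstacle is locating the regrouping of $g\cdot_G \huaB(h_2^\dag)$ above; this is the unique place where the Rota-Baxter identity \eqref{rRBg} is used, and everything else is routine bookkeeping.
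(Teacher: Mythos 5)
Your proposal is correct and uses exactly the same ingredients as the paper's proof (Lemma \ref{actionlem}, the defining identity \eqref{rRBg} applied to the pair $\Phi(g)h_2^\dag$ and $\Phi(\Theta(h_2)g)h_1^\dag$, and the fact that $\huaB$ is a homomorphism for $\star$); the only difference is that you expand $\Theta(h_1\star h_2)g$ and regroup towards $\Theta(h_1)(\Theta(h_2)g)$, whereas the paper runs the identical chain of equalities in the opposite direction starting from $\Theta(h_1)\Theta(h_2)g$. This is essentially the same approach.
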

\begin{proof}
For all $h_1, h_2\in H$ and $g\in G$, by Lemma \ref{actionlem}, we have
\begin{eqnarray*}
&&\Theta(h_1)\Theta(h_2)g\\
&=&\Theta(h_{1})\Big(\huaB(\Phi(g)h_2^{\dag})^{-1}\cdot_Gg\cdot_G\huaB(h_2^{\dag})\Big)\\
&=&\huaB(\Phi\Big(\huaB(\Phi(g)h_2^{\dag})^{-1}\cdot_Gg\cdot_G\huaB(h_2^{\dag})\Big)h_1^{\dag})^{-1}\cdot_G\huaB(\Phi(g)h_2^{\dag})^{-1}\cdot_Gg\cdot_G\huaB(h_2^{\dag})\cdot_G\huaB(h_1^{\dag})\\
&=&\Big(\huaB(\Phi(g)h_2^{\dag})\cdot_G\huaB(\Phi\Big(\huaB(\Phi(g)h_2^{\dag})^{-1}\cdot_Gg\cdot_G\huaB(h_2^{\dag})\Big)h_1^{\dag})\Big)^{-1}\cdot_Gg\cdot_G\huaB(h_2^{\dag}\star h_1^{\dag})\\
&=&\huaB\Big(\Phi(g)h_2^{\dag}\cdot_{H}\Phi(\huaB(\Phi(g)h_2^{\dag}))\Phi\Big(\huaB(\Phi(g)h_2^{\dag})^{-1}\cdot_Gg\cdot_G\huaB(h_2^{\dag})\Big)h_{1}^{\dag}\Big)^{-1}\cdot_Gg\cdot_G\huaB(h_2^{\dag}\star h_1^{\dag})\\
&=&\huaB\Big(\Phi(g)h_2^{\dag}\cdot_{H}\Phi\Big(g\cdot_G\huaB(h_2^{\dag})\Big)h_{1}^{\dag}\Big)^{-1}\cdot_Gg\cdot_G\huaB(h_2^{\dag}\star h_1^{\dag})\\
&=&\Big(\huaB(\Phi(g)(h_2^{\dag}\star h_{1}^{\dag}))\Big)^{-1}\cdot_Gg\cdot_G\huaB(h_2^{\dag}\star h_1^{\dag})\\
&=&\Theta(h_{1}\star h_{2})g.
\end{eqnarray*}
Thus, $\Theta$ is an action of the descendent Lie group $(H, e_{H}, \star)$ on the manifold $G$.
\end{proof}

  Since $\Theta(h)\in\text{Diff}(G)$ for all $h\in H$ and $\Theta(h)e_G=e_G$, then $\Theta(h)_{*e_G}: \g\rightarrow\g$ is an isomorphism of vector spaces. By $\Theta(h_1\star h_2)=\Theta(h_1)\Theta(h_2)$, we have $\Theta(h_1\star h_2)_{*e_G}=\Theta(h_1)_{*e_G}\Theta(h_2)_{*e_G}$. Thus we obtain a Lie group homomorphism from the descendent Lie group $(H, e_H, \star)$ to $GL(\g)$, which is also denoted by $\Theta:H\lon GL(\g)$. Consequently, we have the following result which plays important roles in our definition of the cohomology of relative Rota-Baxter operators on Lie groups.

  \begin{lem}\label{lem:rep-on-g}
With the above notations,  $\Theta:H\lon GL(\g)$ is an action of the descendent Lie group $(H, e_H, \star)$ on the vector space $\g$.
 \end{lem}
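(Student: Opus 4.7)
The plan is to realize $\Theta$ as the differential at $e_G$ of the $\mathrm{Diff}(G)$-valued action already constructed in Theorem~\ref{groupact}. I see three things to verify: (i) $\Theta(h)$ fixes the unit $e_G$ of $G$ for every $h\in H$; (ii) the induced map $h\mapsto\Theta(h)_{*e_G}$ is a group homomorphism from $H_\huaB$ to $GL(\g)$; and (iii) this map is smooth. The substantive work has already been done in Theorem~\ref{groupact}, so all three should be essentially mechanical.

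For (i), setting $g=e_G$ in the defining formula \eqref{eqactionH} and using the fact that $\Phi(e_G)=\Id_H$ (which holds because $\Phi$ is a group homomorphism into $\Aut(H)$), I obtain $\Theta(h)e_G=\huaB(h^\dag)^{-1}\cdot_G\huaB(h^\dag)=e_G$. Since $\Theta(h)$ is a diffeomorphism of $G$ fixing the point $e_G$, its differential $\Theta(h)_{*e_G}\colon \g\to\g$ is a well-defined linear isomorphism. For (ii), I would invoke Theorem~\ref{groupact} directly: it gives $\Theta(h_1\star h_2)=\Theta(h_1)\circ\Theta(h_2)$ as diffeomorphisms of $G$, and since each $\Theta(h_i)$ fixes $e_G$, the chain rule yields
\[
\Theta(h_1\star h_2)_{*e_G}=\Theta(h_1)_{*e_G}\circ\Theta(h_2)_{*e_G}.
\]
The unit axiom $\Theta(e_H)_{*e_G}=\Id_\g$ follows from $e_H^\dag=e_H$ (a direct consequence of \eqref{eq:dag} together with $\huaB(e_H)=e_G$), which forces $\Theta(e_H)$ itself to be the identity map of $G$.

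For (iii), I would observe that the joint evaluation map $H\times G\to G$, $(h,g)\mapsto\Theta(h)g$, is smooth because $\huaB$, $\Phi$, inversion, and multiplication are all smooth; differentiating in the $g$-variable at $e_G$ then produces a smooth family of linear isomorphisms of $\g$ depending smoothly on $h$, which is precisely what smoothness of $\Theta\colon H\to GL(\g)$ demands. I do not foresee any genuine obstacle here: the multiplicativity of $\Theta$ on the level of diffeomorphisms was already secured in Theorem~\ref{groupact}, and the present lemma is a clean transfer of that identity to differentials at the common fixed point $e_G$. The only mildly delicate bookkeeping is checking $\Phi(e_G)=\Id_H$ and $e_H^\dag=e_H$ to make the fixed-point and unit statements go through.
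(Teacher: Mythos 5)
Your proposal is correct and follows the same route as the paper: the paper likewise obtains $\Theta:H\to GL(\g)$ by differentiating the $\mathrm{Diff}(G)$-valued action of Theorem~\ref{groupact} at the fixed point $e_G$ and transferring the identity $\Theta(h_1\star h_2)=\Theta(h_1)\Theta(h_2)$ to the differentials via the chain rule. The extra details you supply (the explicit check that $\Theta(h)e_G=e_G$, the unit axiom via $e_H^\dag=e_H$, and smoothness) are exactly the points the paper leaves implicit, and they all go through.
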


  Now we are ready to define a cohomology theory for relative
 Rota-Baxter operators on Lie groups. First we  recall a standard
 version of the smooth  cohomology
 of a Lie group $G$ with coefficients in a $G$-module $A$ with a $G$-action
 $\Pi$ (see e.g. \cite{Wei}). %Let $(G, e_G, \cdot_G)$ be a Lie
                                %group and $(A, e_A, +)$ be an abelian
                                %Lie group. Let $\Pi$ be an action of
                                %$G$ on $A$.
An $n$-cochain is a smooth map $$\alpha_n:
\underbrace{G\times\cdots\times G}_n\lon A.$$  The set of $n$-cochains
forms an abelian group, which will be denoted by $C^{n}(G, A)$. The differential $\dM: C^{n}(G, A)\rightarrow C^{n+1}(G, A)$ is defined by
\begin{eqnarray*}
\dM(\alpha_n)(g_1,\cdots,g_n,g_{n+1})&=&\Pi(g_1)\alpha_n(g_2, \cdots, g_n, g_{n+1})+(-1)^{n+1}\alpha_{n}(g_1,\cdots,g_n)\\
&&+\sum_{i=1}^{n}(-1)^{i}\alpha_n(g_1,\cdots,g_{i-1}, g_i\cdot_{G}g_{i+1}, g_{i+2}, \cdots g_{n+1}).
\end{eqnarray*}
%It was proved in \cite{AZC} that $\dM^2=0$.
%Then we have the following sequence of abelian groups
%\begin{equation*}
%0\rightarrow C^{0}\stackrel{\dM_0}\lon C^{1}\stackrel{\dM_1}\lon C^{2}\stackrel{\dM_2}\lon\cdots\stackrel{\dM_n}\lon C^{n+1}\lon\cdots.
%\end{equation*}
%Denote by $Z^{n}(G,A)=\ker(\dM) $ and $B^{n}(G,A)=\Img(\dM)$, the
%quotient group $$H^{n}(G,A)=Z^{n}(G,A)/B^{n}(G,A)$$ is defined to be the $n$-th cohomology group of $G$ with coefficients in $A$.

We consider the Lie group  $(H, e_H, \star)$ and its module $\g$  via the
action $\Theta:H\lon GL(\g)$ given in Lemma \ref{lem:rep-on-g}.
Denote by $C^k(\huaB)=C^{k-1}(H, \g)$ and by  $\dM^\huaB$ the group cohomology differential. Parallel to Definition
\ref{deficorbal}, we have
\begin{defi}
Let $\huaB$ be a relative Rota-Baxter operator on $(G, e_G, \cdot_G)$ with respect to an action $(H; \Phi)$. The cohomology of the cochain complex  $(C^*(\huaB)=\oplus_{k=1}^{+\infty}C^k(\huaB), \dM^\huaB)$  is defined to be  the {\bf  cohomology   for the relative Rota-Baxter operator $\huaB$}.
\end{defi}

Denote by $\huaH^{k}({\huaB})$ the $k$-th cohomology group.

\begin{rmk}
Our cohomology theory in $\RBgh$ is well rooted
via the theory of controlling algebras and is tested and justified
via the deformation theory. The above cohomology theory in $\RBGH$ on the level of
Lie groups will be justified via the Van-Est theory, which is the content of
the next section.
\end{rmk}

The following statements are important for the functoriality of our
cohomology theory.
\begin{pro}\label{HomomH}
Let $\huaB$ and $\huaB'$ be relative Rota-Baxter operators on a Lie group $G$ with respect to an action $(H;\Phi)$ and $(\Psi_G, \Psi_H)$ be a homomorphism from $\huaB'$ to $\huaB$. Then $\Psi_H$ is a Lie group homomorphism from
the descendent Lie group $(H, e_H, \star_{\huaB'})$ of $\huaB'$ to the descendent Lie group $(H, e_H, \star_{\huaB})$ of $\huaB$.
\end{pro}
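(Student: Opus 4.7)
The plan is to unfold the definition of the descendent multiplication given in Proposition \ref{pro:dec-gp}, namely $h_1 \star_{\huaB'} h_2 = h_1 \cdot_H \Phi(\huaB'(h_1))h_2$ and $h_1 \star_{\huaB} h_2 = h_1 \cdot_H \Phi(\huaB(h_1))h_2$, and then chain together the three facts at hand: (i) $\Psi_H$ is already a Lie group homomorphism with respect to the original multiplication $\cdot_H$, (ii) the intertwining identity \eqref{hom-rbo-gp2}, and (iii) the compatibility with the operators \eqref{hom-rbo-gp1}. Smoothness of the resulting map is automatic since $\Psi_H$ is itself smooth.

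Concretely, for $h_1,h_2\in H$ I would compute
\begin{eqnarray*}
\Psi_H(h_1 \star_{\huaB'} h_2)
&=& \Psi_H\bigl(h_1 \cdot_H \Phi(\huaB'(h_1))h_2\bigr)\\
&=& \Psi_H(h_1) \cdot_H \Psi_H\bigl(\Phi(\huaB'(h_1))h_2\bigr)\\
&=& \Psi_H(h_1) \cdot_H \Phi\bigl(\Psi_G(\huaB'(h_1))\bigr)\Psi_H(h_2)\\
&=& \Psi_H(h_1) \cdot_H \Phi\bigl(\huaB(\Psi_H(h_1))\bigr)\Psi_H(h_2)\\
&=& \Psi_H(h_1) \star_{\huaB} \Psi_H(h_2),
\end{eqnarray*}
where the second equality uses (i), the third uses (ii), and the fourth uses (iii). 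That $\Psi_H$ sends $e_H$ to $e_H$ is built into (i), so the unit is preserved.

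There is really no main obstacle: the argument is a direct four-line unwinding of the definitions. The only conceptual point worth emphasising is that the two compatibility conditions \eqref{hom-rbo-gp1} and \eqref{hom-rbo-gp2} are exactly the minimal ingredients needed — the first transports $\huaB'$ into $\huaB$ across the pair $(\Psi_G,\Psi_H)$, while the second transports the action $\Phi$ in the correct variance — and combining them converts the $\huaB'$-twist in $\star_{\huaB'}$ into the $\huaB$-twist in $\star_{\huaB}$. This neatly reflects the graph-subalgebra interpretation of Proposition \ref{graphgroup}: the pair $(\Psi_G,\Psi_H)$ is an endomorphism of $G\ltimes_\Phi H$ restricting to $Gr(\huaB')\to Gr(\huaB)$, and the descendent group structure is nothing but the transferred structure on $H$ via the graph projection, so the statement is essentially a restatement of this functoriality.
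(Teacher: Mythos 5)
Your proposal is correct and follows essentially the same route as the paper: both unwind the definition of $\star_{\huaB'}$ and chain together the homomorphism property of $\Psi_H$ with \eqref{hom-rbo-gp2} and \eqref{hom-rbo-gp1} to convert the $\huaB'$-twist into the $\huaB$-twist. Your write-up is in fact cleaner, since the paper's displayed computation contains a typographical slip (writing $\huaB'(\Phi(h_1))h_2$ where $\Phi(\huaB'(h_1))h_2$ is meant) that you avoid.
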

\begin{proof}
By  \eqref{hom-rbo-gp1}, \eqref{hom-rbo-gp2} and the fact that $\Psi_H$ is a Lie group homomorphism, we have
\begin{eqnarray*}
\Psi_H(h_1\star_{\huaB'} h_2)&=&\Psi_H(h_1\cdot_H\huaB'(\Phi(h_1))h_2)=\Psi_H(h_1)\cdot_H\Psi(\huaB'(\Phi(h_1))h_2)\\
&=&\Psi_H(h_1)\cdot_H\Phi\Big(\huaB(\Psi_H(h_1))\Big)\Psi_H(h_2)=\Psi_H(h_1)\star_{\huaB}\Psi_H(h_2),
\end{eqnarray*}
which implies that $\Psi_H$ is a homomorphism between the descendent Lie groups.
\end{proof}

\begin{pro}\label{pro-homactgro}
Let $\huaB$ and $\huaB'$ be relative Rota-Baxter operators on $G$ with respect to an action $(H;\Phi)$ and $(\Psi_G, \Psi_H)$ be a homomorphism from $\huaB'$ to $\huaB$. Then the induced action $\Theta'$ of the Lie group $(H, \star_{\huaB'})$ on $G$ and the induced action $\Theta$ of the Lie group $(H, \star_{\huaB})$ on $G$ satisfy the following relation:
$$
\Psi_G\circ\Theta'(h)=\Theta(\Psi_H(h))\circ\Psi_G,\quad \forall h\in H.
$$
That is for all $h\in H$, the following diagram commutes:
$$
\xymatrix{
  G \ar[d]_{\Theta'(h)} \ar[r]^{\Psi_G}
                & G \ar[d]^{\Theta(\Psi_{H}(h))}  \\
    G \ar[r]_{\Psi_G}
                & G            .}
$$
\end{pro}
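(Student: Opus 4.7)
The plan is to verify the identity $\Psi_G\circ\Theta'(h)=\Theta(\Psi_H(h))\circ\Psi_G$ pointwise by expanding both sides using the explicit formula \eqref{eqactionH} for the descendent action, and then pushing $\Psi_G$ through using the defining homomorphism conditions \eqref{hom-rbo-gp1} and \eqref{hom-rbo-gp2}.

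First I would start with the left-hand side. By \eqref{eqactionH} applied to $\huaB'$,
\[
\Theta'(h)g=\bigl(\huaB'(\Phi(g)h^{\dag'})\bigr)^{-1}\cdot_G g\cdot_G \huaB'(h^{\dag'}),
\]
where $h^{\dag'}$ denotes the inverse of $h$ in the descendent group $(H,\star_{\huaB'})$. Applying the Lie group homomorphism $\Psi_G$ and distributing it over $\cdot_G$ and inversion, then using \eqref{hom-rbo-gp1} to replace $\Psi_G\circ\huaB'$ by $\huaB\circ\Psi_H$, I obtain an expression of the form
\[
\Psi_G\bigl(\Theta'(h)g\bigr)=\bigl(\huaB(\Psi_H(\Phi(g)h^{\dag'}))\bigr)^{-1}\cdot_G \Psi_G(g)\cdot_G \huaB(\Psi_H(h^{\dag'})).
\]

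Next I would invoke the equivariance relation \eqref{hom-rbo-gp2} to rewrite $\Psi_H(\Phi(g)h^{\dag'})=\Phi(\Psi_G(g))\Psi_H(h^{\dag'})$. The remaining task is to compare $\Psi_H(h^{\dag'})$ with $(\Psi_H(h))^{\dag}$, where $\dag$ is the inverse in the descendent group $(H,\star_{\huaB})$. This is precisely where Proposition \ref{HomomH} enters: since $\Psi_H$ is already known to be a Lie group homomorphism from $(H,\star_{\huaB'})$ to $(H,\star_{\huaB})$, it must send $\star_{\huaB'}$-inverses to $\star_{\huaB}$-inverses, so $\Psi_H(h^{\dag'})=(\Psi_H(h))^{\dag}$.

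Substituting this identification, the expression becomes
\[
\bigl(\huaB(\Phi(\Psi_G(g))(\Psi_H(h))^{\dag})\bigr)^{-1}\cdot_G \Psi_G(g)\cdot_G \huaB((\Psi_H(h))^{\dag}),
\]
which is exactly $\Theta(\Psi_H(h))\Psi_G(g)$ by \eqref{eqactionH} applied to $\huaB$ with group element $\Psi_H(h)\in H$ and manifold point $\Psi_G(g)\in G$. This completes the proof. I do not anticipate any genuine obstacle; the only subtle ingredient is the identity $\Psi_H(h^{\dag'})=(\Psi_H(h))^{\dag}$, but this is an immediate consequence of the preceding proposition and the general fact that a group homomorphism commutes with inversion. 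The remainder is purely formal manipulation with the homomorphism axioms \eqref{hom-rbo-gp1}--\eqref{hom-rbo-gp2}.
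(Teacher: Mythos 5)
Your proposal is correct and follows essentially the same route as the paper's own proof: expand $\Theta'(h)g$ via \eqref{eqactionH}, push $\Psi_G$ through using \eqref{hom-rbo-gp1} and \eqref{hom-rbo-gp2}, and identify $\Psi_H(h^{\dag'})=(\Psi_H(h))^{\dag}$ via Proposition \ref{HomomH}. The only difference is the (immaterial) order in which the equivariance relation and the inverse identification are applied.
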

\begin{proof}
 By \eqref{hom-rbo-gp1}, \eqref{hom-rbo-gp2}, \eqref{eqactionH} and Proposition  \ref{HomomH}, for all $g\in G, h\in H$, we have
\begin{eqnarray*}
\Psi_G(\Theta'(h)g)&=&\Psi_G((\huaB'(\Phi(g)h^{\dag'}))^{-1}\cdot_{G}g\cdot_{G}\huaB'(h^{\dag'}))\\
&=&\Psi_G((\huaB'(\Phi(g)h^{\dag'}))^{-1})\cdot_{G}\Psi_G(g)\cdot_{G}\Psi_G(\huaB'(h^{\dag'}))\\
&=&\Psi_G(\huaB'(\Phi(g)h^{\dag'}))^{-1}\cdot_{G}\Psi_G(g)\cdot_{G}\huaB(\Psi_H(h^{\dag'}))\\
&=&\huaB(\Psi_H(\Phi(g)h^{\dag}))^{-1}\cdot_{G}\Psi_G(g)\cdot_{G}\huaB((\Psi_H(h))^{\dag})\\
&=&\huaB(\Phi(\Psi_G(g))(\Psi_H(h))^{\dag})^{-1}\cdot_G\Psi_G(g)\cdot_G\huaB((\Psi_H(h))^{\dag})\\
&=&\Theta(\Psi_{H}(h))\Psi_{G}(g).
\end{eqnarray*}
We finish the proof.
\end{proof}

Taking differentiation, we get
\begin{cor}\label{prohomactgro}
Let   $\huaB$ and $\huaB'$ be relative Rota-Baxter operators on $G$ with respect to an action $(H;\Phi)$, and $(\Psi_G, \Psi_H)$ be a homomorphism from $\huaB'$ to $\huaB$.  Then the action $\Theta'$ of the descendent Lie group $(H, e_H, \star_{\huaB'})$ on $\g$ and the action $\Theta$ of the descendent Lie group $(H, e_H, \star_{\huaB})$ on $\g$ satisfy the following relation:
\begin{equation*}
\Theta(\Psi_H(h))\circ(\Psi_G)_{*e_G}=(\Psi_G)_{*e_G}\circ\Theta'(h),\quad \forall h\in H.
\end{equation*}
%That is for all $h\in H, x\in\g$, we have the following diagram
%$$\xymatrix{\g \ar[d]_{\Theta'(h)} \ar[r]^{(\Psi_G)_{*e_G}}          & \g \ar[d]^{\Theta(\Psi_{H}(h))}  \\\g \ar[r]_{(\Psi_G)_{*e_G}}& \g           .}$$
\end{cor}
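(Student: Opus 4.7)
The plan is to derive this identity by simply differentiating the statement of Proposition \ref{pro-homactgro} at the identity $e_G$. All the conceptual content already lives in that proposition; the corollary is its infinitesimal shadow.

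First, I would record that both actions $\Theta'(h)$ and $\Theta(\Psi_H(h))$ are diffeomorphisms of $G$ fixing $e_G$ (this was observed just before Lemma \ref{lem:rep-on-g}), and that the Lie group homomorphism $\Psi_G \colon G \to G$ also fixes $e_G$. Therefore every map appearing in the equality
\[
\Psi_G \circ \Theta'(h) \;=\; \Theta(\Psi_H(h)) \circ \Psi_G
\]
from Proposition \ref{pro-homactgro} is a smooth self-map of $G$ sending $e_G$ to $e_G$. Hence we may apply the differential $(-)_{*e_G}\colon \g \to \g$ to each side.

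Next I would invoke the chain rule on both sides. Since $\Theta'(h)(e_G)=e_G$ and $\Psi_G(e_G)=e_G$, the chain rule yields
\[
\bigl(\Psi_G \circ \Theta'(h)\bigr)_{*e_G} \;=\; (\Psi_G)_{*e_G} \circ \Theta'(h)_{*e_G},
\]
\[
\bigl(\Theta(\Psi_H(h)) \circ \Psi_G\bigr)_{*e_G} \;=\; \Theta(\Psi_H(h))_{*e_G} \circ (\Psi_G)_{*e_G}.
\]
Recalling that by definition (cf. the discussion preceding Lemma \ref{lem:rep-on-g}) we write $\Theta'(h)$ and $\Theta(\Psi_H(h))$ on $\g$ to mean precisely $\Theta'(h)_{*e_G}$ and $\Theta(\Psi_H(h))_{*e_G}$, equating the two differentials gives exactly
\[
(\Psi_G)_{*e_G}\circ \Theta'(h) \;=\; \Theta(\Psi_H(h))\circ (\Psi_G)_{*e_G},
\]
which is the claim.

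There is no real obstacle here: the nontrivial group-theoretic identity was already secured in Proposition \ref{pro-homactgro}, and this corollary is a one-line consequence via the chain rule. The only small point to be careful about is a notational one, namely to distinguish $\Theta(\cdot)$ as a diffeomorphism of $G$ from its tangent map on $\g$, and to verify that all maps in sight fix $e_G$ so that the differentials compose correctly; both were already addressed in the paragraph preceding Lemma \ref{lem:rep-on-g}.
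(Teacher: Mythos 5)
Your proposal is correct and matches the paper's own derivation: the paper obtains this corollary from Proposition \ref{pro-homactgro} with the single phrase ``Taking differentiation, we get,'' which is exactly the chain-rule argument you spell out. Your added care about all maps fixing $e_G$ and the notational identification of $\Theta(h)$ with $\Theta(h)_{*e_G}$ is a faithful elaboration of the same one-line step.
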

%\begin{proof}
%By Proposition \ref{pro-homactgro} and for all $h\in H, x\in\g$, we have
%\begin{eqnarray*}
%\Theta(\Psi_H(h))(\Psi_G)_{*e_G}(x)&=&\frac{d}{dt}|_{t=0}\Theta(\Psi_{H}(h))\Psi_{G}(\exp(tx))\\
%&=&\frac{d}{dt}|_{t=0}\Psi_G(\Theta'(h)\exp(tx))\\&=&(\Psi_G)_{*e_G}\Theta'(h)x.
%\end{eqnarray*}
%We finish the proof.
%\end{proof}

%Denote the set of $k$-cocycles by $Z_{\huaB}^{k}(H, \g)$ and the set of $k$-coboundaries by $B_{\huaB}^{k}(H, \g)$.
%\begin{equation*}
%\huaH_{\huaB}^{k}(H, \g)=Z_{\huaB}^{k}(H, \g)/B_{\huaB}^{k}(H, \g), \quad k\geq 0,
%\end{equation*}
Now we
prove the functoriality of this cohomology theory.  Let  $\huaB$ and $\huaB'$ be relative Rota-Baxter operators on $G$ with respect to an action $(H;\Phi)$. Let $(\Psi_G, \Psi_H)$ be a homomorphism from $\huaB'$ to $\huaB$ in which $\Psi_H$ is invertible. Define a map $P: C^{k}(\huaB')\lon C^{k}(\huaB)$ by
\begin{equation*}
P(\omega)(h_1,\cdots,h_{k-1})=(\Psi_{G})_{*e_G}\omega(\Psi_H^{-1}(h_1),\cdots,\Psi_H^{-1}(h_{k-1})), \quad \forall h_i\in H.
\end{equation*}

\begin{thm}
With the above notations, $P$ is cochain map from the cochain complex $(C^*(\huaB'), \dM^{\huaB'})$ to $(C^*(\huaB), \dM^\huaB)$, which induces  a homomorphism $P_*$ from  the cohomology group $\huaH^{k}(\huaB')$   to $\huaH^{k}(\huaB)$ for all $k\geq 1$.
\end{thm}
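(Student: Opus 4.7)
The plan is to verify the cochain-map identity $\dM^{\huaB} \circ P = P \circ \dM^{\huaB'}$ pointwise on $C^{k}(\huaB') = C^{k-1}(H,\g)$, and then observe that the induced map on cohomology is obtained formally. The argument is a direct expansion parallel to the Lie-algebra analogue proven just above in Section~\ref{sec:alg}, with Proposition~\ref{HomomH} replacing the compatibility with descendent brackets and Corollary~\ref{prohomactgro} replacing Proposition~\ref{prohomrepalg} on compatibility of representations.

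First I would write out $\dM^{\huaB}(P(\omega))(h_{1},\dots,h_{k})$ using the standard smooth group-cohomology formula with the $H_{\huaB}$-module $\g$ given by the action $\Theta$ from Lemma~\ref{lem:rep-on-g}. Substituting the definition of $P$ produces three types of terms: the leading term $\Theta(h_{1})(\Psi_{G})_{*e_{G}}\omega(\Psi_{H}^{-1}(h_{2}),\dots,\Psi_{H}^{-1}(h_{k}))$, the middle terms involving $\Psi_{H}^{-1}(h_{i}\star_{\huaB}h_{i+1})$, and the trailing term $(-1)^{k}(\Psi_{G})_{*e_{G}}\omega(\Psi_{H}^{-1}(h_{1}),\dots,\Psi_{H}^{-1}(h_{k-1}))$.

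For the leading term I would apply Corollary~\ref{prohomactgro}, rewriting it as
$(\Psi_{G})_{*e_{G}}\bigl(\Theta'(\Psi_{H}^{-1}(h_{1}))\,\omega(\Psi_{H}^{-1}(h_{2}),\dots,\Psi_{H}^{-1}(h_{k}))\bigr)$. For the middle terms, Proposition~\ref{HomomH} gives $\Psi_{H}^{-1}(h_{i}\star_{\huaB}h_{i+1}) = \Psi_{H}^{-1}(h_{i})\star_{\huaB'}\Psi_{H}^{-1}(h_{i+1})$, so that the arguments of $\omega$ are exactly those appearing in $\dM^{\huaB'}\omega$. The trailing term is already in the desired form. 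Because $(\Psi_{G})_{*e_{G}}$ is a linear map on $\g$, it can be pulled out of the entire alternating sum, and what remains inside is precisely $(\dM^{\huaB'}\omega)(\Psi_{H}^{-1}(h_{1}),\dots,\Psi_{H}^{-1}(h_{k}))$, which equals $P(\dM^{\huaB'}\omega)(h_{1},\dots,h_{k})$ by definition of $P$.

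This gives $\dM^{\huaB}\circ P = P\circ \dM^{\huaB'}$, so $P$ sends cocycles to cocycles and coboundaries to coboundaries, and therefore descends to a well-defined homomorphism $P_{*}\colon \huaH^{k}(\huaB')\to \huaH^{k}(\huaB)$ for every $k\geq 1$. I do not anticipate a genuine obstacle: the only care needed is bookkeeping the index shift $C^{k}(\huaB) = C^{k-1}(H,\g)$ and keeping $\star_{\huaB}$ versus $\star_{\huaB'}$, and the respective actions $\Theta$ and $\Theta'$, on the correct side of $(\Psi_{G})_{*e_{G}}$ at each step; the two results just cited are exactly the two naturality statements that make every swap legal.
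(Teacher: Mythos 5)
Your proposal is correct and follows essentially the same route as the paper's own proof: expand $\dM^{\huaB}(P\omega)$, use Corollary~\ref{prohomactgro} to commute $(\Psi_G)_{*e_G}$ past the action term, use the homomorphism property of $\Psi_H$ between descendent groups (Proposition~\ref{HomomH}) for the middle terms, and pull $(\Psi_G)_{*e_G}$ out of the alternating sum. Your explicit citation of Proposition~\ref{HomomH} for the step $\Psi_H^{-1}(h_i\star_{\huaB}h_{i+1})=\Psi_H^{-1}(h_i)\star_{\huaB'}\Psi_H^{-1}(h_{i+1})$ is a point the paper leaves implicit, but the argument is otherwise identical.
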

\begin{proof}
 For all $\omega\in C^{k}(\huaB')$, by Corollary \ref{prohomactgro}, we have
\begin{eqnarray*}
&&\dM^{\huaB}(P\omega)(h_1,h_2,\cdots,h_{k})\\
&=&\Theta(h_1)(\Psi_{G})_{*e_G}\omega(\Psi_H^{-1}(h_2),\cdots,\Psi_H^{-1}(h_{k}))\\
&&+\sum_{i=1}^{k-1}(-1)^{i}P(\omega)(h_1,\cdots,h_{i-1}, h_i\star_{\huaB}h_{i+1}, h_{i+2}, \cdots, h_{k})\\
&&+(-1)^{k}P(\omega)(h_1,\cdots,h_{k-1})\\
&=&(\Psi_{G})_{*e_G}\Theta'(\Psi_H^{-1}(h_1))\omega(\Psi_H^{-1}(h_2),\cdots,\Psi_H^{-1}(h_{k}))\\
&&+\sum_{i=1}^{k-1}(-1)^{i}(\Psi_{G})_{*e_G}\omega(\Psi_H^{-1}(h_1),\cdots,\Psi_H^{-1}(h_{i-1}), \Psi_H^{-1}(h_i)\star_{\huaB'}\Psi_H^{-1}(h_{i+1}), \Psi_H^{-1}(h_{i+2}), \cdots, \Psi_H^{-1}(h_{k}))\\
&&+(-1)^{k}(\Psi_{G})_{*e_G}\omega(\Psi_H^{-1}(h_1),\cdots,\Psi_H^{-1}(h_{k-1}))\\
&=&(\Psi_{G})_{*e_G}\dM^{\huaB'}\omega(\Psi_H^{-1}(h_1),\cdots,\Psi_H^{-1}(h_{k}))\\
&=&P(\dM^{\huaB'}\omega)(h_1,\cdots,h_{k}),\quad \forall h_i\in H.
\end{eqnarray*}
Thus $P$ is a cochain map. Consequently it  induces a homomorphism $P_*$ from   the cohomology group $\huaH^{k}(B')$  to  $\huaH^{k}(B)$ for all $k\geq 1$.
\end{proof}

\section{Differentiation and Van Est theory}\label{sec:ve}

In this section, we establish the differentiation functor for  relative
Rota-Baxter operators and the Van Est homomorphism on  the cohomology
level.  The classical Van Est isomorphism \cite{Van}  gives the
relation between the smooth cohomology of Lie groups and the
cohomology of Lie algebras. See \cite{AC,CD,Cr, Hou, Li,MS,Pfl,WX}
for various Van Est type theorems.

\subsection{The differentiation functor for  relative
Rota-Baxter operators}

Let $\Phi$ be an action of a Lie group $(G, e_G, \cdot_G)$ on a manifold $M$. Define    $\Delta: G\times M\rightarrow M$ by   $\Delta(g, m)=\Phi(g)m$. Then
 $\Delta(e_{G}, m)=m$ and $\Delta(g_1\cdot_G g_2, m)=\Delta(g_{1}, \Delta(g_2, m))$, for all $g_1, g_2\in G, m\in M$. Moreover,  we have the following Leibniz rule,
\begin{eqnarray*}
\frac{d}{dt}\bigg|_{t=0}\Delta(\gamma_1(t), \gamma_2(t))&=&\frac{d}{dt}\bigg|_{t=0}\Delta(\gamma_1(0), \gamma_2(t))+\frac{d}{dt}\bigg|_{t=0}\Delta(\gamma_1(t), \gamma_2(0))\\
&=&\frac{d}{dt}\bigg|_{t=0}\Phi(\gamma_1(t))\gamma_2(t)\\
&=&\frac{d}{dt}\bigg|_{t=0}\Phi(\gamma_1(0))\gamma_2(t)+\frac{d}{dt}\bigg|_{t=0}\Phi(\gamma_1(t))\gamma_2(0),
\end{eqnarray*}
where $\gamma_1(t)$ is a path in $G$ with $\gamma_1(0)=g$ and $\gamma_2(t)$ is a path in $M$ with $\gamma_{2}(0)=m$.

Let $(G, e_G, \cdot_G)$ and $(H, e_H, \cdot_H)$ be Lie groups and denote by $\exp_G$ and $\exp_H$ the exponential maps for the Lie groups $(G, e_G, \cdot_G)$ and $(H, e_H, \cdot_H)$ respectively.
Let $\Phi: G\rightarrow \Aut(H)$ be an action of $G$ on $H$. Since
$\Phi(g)\in\Aut(H)$ for all $g\in G$, then $\Phi(g)_{*e_H}:
\h\rightarrow\h$ is a Lie algebra isomorphism. By $\Phi(g_1\cdot_G
g_2)=\Phi(\g_1)\Phi(g_2)$, we have $\Phi(g_1\cdot_G
g_2)_{*e_H}=\Phi(g_1)_{*e_H}\Phi(g_2)_{*e_H}$. Thus we obtain a Lie
group homomorphism from $G$ to $\Aut(\h)$, which we denote by
$\tilde{\Phi}:G\lon\Aut(\h)$.
Then taking the differential, we obtain a Lie algebra homomorphism
$\phi:=\tilde{\Phi}_{*e_G}$ from the Lie algebra $\g$ to $\Der(\h)$. We call
$\phi$ the differentiated action of $\Phi$. In fact, Lie II theorem tells us that
$\Aut(H)\cong \Aut(\h)$, if $H$ is connected and simply
connected. Therefore when both $G$ and $H$
are connected and simply connected, given a Lie algebra action $\phi: \g \to
\Der(\h)$, there is a unique integrated action $\Phi: G\to \Aut(H)$
whose differentiation is $\phi$.  This procedure can be well explained by the following diagram:
\begin{equation}\label{eq:relation}
\small{ \xymatrix{G\ar@{=}[d]  \ar[rr]^{\Phi} & & \Aut(H) \ar[d]^{\text{differentiation}}\\
 G \ar[rr]^{\tilde{\Phi}}\ar[d]^{\text{differentiation}} &  &  \Aut(\h) \ar[d]^{\text{differentiation}}  \\
 \g \ar[rr]^{  \phi=\tilde{\Phi}_{*e_G} } & &\Der(\h).}
}
\end{equation}

\begin{thm}\label{thmhuaBb}
Let  $\huaB:H\lon G$ be a relative Rota-Baxter operator on $G$ with respect to an action $(H;\Phi)$. Define $B:\h\rightarrow\g$ by
\begin{equation}\label{eq:BB}
B=\huaB_{*e_{H}},
\end{equation}
which is the tangent map of $\huaB$ at the identity $e_{H}$. Then $B$
is a relative Rota-Baxter operator on $\g$ with respect to the
action $(\h;\phi)$, where $\phi$ is the differentiated action
of $\Phi$ defined in \eqref{eq:relation}.
\end{thm}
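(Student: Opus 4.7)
The natural strategy is to exploit the graph characterizations at both levels. By Proposition \ref{graphgroup}, since $\huaB$ is a relative Rota-Baxter operator, its graph $\Gamma := \{(\huaB(h),h) : h\in H\}$ is a Lie subgroup of the semidirect product Lie group $G\ltimes_\Phi H$. Dually, by Proposition \ref{graphro}, to prove $B = \huaB_{*e_H}$ is a relative Rota-Baxter operator on $\g$ it suffices to show that $Gr(B) = \{(B(u),u) : u\in\h\}$ is a Lie subalgebra of $\g\ltimes_\phi\h$. Thus the whole proof reduces to identifying $\Lie(\Gamma)$ with $Gr(B)$ inside the Lie algebra of $G\ltimes_\Phi H$.

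First I would verify the standard fact that the Lie algebra of $G\ltimes_\Phi H$ is precisely $\g\ltimes_\phi\h$, with $\phi$ being the differentiated action from \eqref{eq:relation}. Differentiating the semidirect product multiplication $(g_1,h_1)\cdot_\Phi(g_2,h_2) = (g_1\cdot_G g_2, h_1\cdot_H \Phi(g_1)h_2)$ via the Leibniz rule for the action $\Delta(g,h) = \Phi(g)h$ recalled at the beginning of this section produces exactly the bracket $[x+u,y+v]_\phi$ defining $\g\ltimes_\phi\h$. Next, since $\huaB$ is smooth with $\huaB(e_H)=e_G$, for each $u\in\h$ the curve $s\mapsto (\huaB(\exp_H(su)),\exp_H(su))$ lies in $\Gamma$ and has velocity $(\huaB_{*e_H}(u),u) = (B(u),u)$ at $s=0$. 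As $u$ ranges over $\h$ these tangent vectors exhaust $T_{(e_G,e_H)}\Gamma$, because $h\mapsto(\huaB(h),h)$ is a smooth embedding of $H$ onto $\Gamma$. Hence $\Lie(\Gamma) = Gr(B)$ as subspaces of $\g\oplus\h$.

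Since $\Gamma$ is a Lie subgroup, $Gr(B) = \Lie(\Gamma)$ is automatically closed under the bracket of $\g\ltimes_\phi\h$, and by Proposition \ref{graphro} this is equivalent to $B$ satisfying \eqref{eq:B}. The only point requiring genuine care is the matching of actions: one must check that the $\phi$ appearing in the bracket on $\g\ltimes_\phi\h$ coincides with the action built in \eqref{eq:relation}. I expect this bookkeeping to be the main, though entirely routine, obstacle; the diagram \eqref{eq:relation} is designed precisely so that both routes from $G$ to $\Der(\h)$ agree. A purely computational alternative would be to substitute $h_1 = \exp_H(su)$ and $h_2 = \exp_H(tv)$ into \eqref{rRBg} and take the mixed second derivative $\partial_s\partial_t|_{s=t=0}$, but this bulkier route obscures the elegant role played by the semidirect product.
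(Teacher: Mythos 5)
Your proposal is correct, but it is not the route the paper takes: the paper's proof is precisely the ``bulkier computational alternative'' you mention in your last sentence. There, one writes $[B(u),B(v)]_\g$ as the mixed second derivative $\frac{d^2}{dtds}\big|_{t,s=0}$ of $\exp_G(tB(u))\cdot_G\exp_G(sB(v))\cdot_G\exp_G(-tB(u))$, replaces each $\exp_G(tB(u))$ by $\huaB(\exp_H(tu))$ (legitimate since only first-order data in each variable enters the mixed derivative), and then repeatedly applies the group-level identity \eqref{rRBg} together with the Leibniz rule for the action until the right-hand side of \eqref{eq:B} emerges. Your argument instead pushes the whole statement through the two graph characterizations: $Gr(\huaB)$ is a Lie subgroup of $G\ltimes_\Phi H$ by Proposition \ref{graphgroup}, its Lie algebra is $Gr(\huaB_{*e_H})$ and hence a Lie subalgebra of $\Lie(G\ltimes_\Phi H)=\g\ltimes_\phi\h$, and Proposition \ref{graphro} then delivers the conclusion. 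The only substantive input your route needs beyond the two propositions is the identification of $\Lie(G\ltimes_\Phi H)$ with $\g\ltimes_\phi\h$ for the \emph{specific} $\phi=\tilde{\Phi}_{*e_G}$ of diagram \eqref{eq:relation}; you correctly flag this as the one point requiring care, and it is indeed standard. What each approach buys: yours is shorter, conceptually transparent, and foreshadows exactly the mechanism the paper later uses for integration in Theorem \ref{thm:obj} (where the Lie subalgebra $Gr(B)\subset\g\ltimes_\phi\h$ is integrated to a subgroup $E\subset G\ltimes_\Phi H$); the paper's computation is self-contained, avoids invoking the Lie functor on semidirect products, and produces along the way the explicit derivative identities that are reused almost verbatim in the proof of Proposition \ref{pro:descendent-diff}. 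Either proof is acceptable; there is no gap in yours.
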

\begin{proof}
%We have known that $\phi$ is a Lie algebra homomorphism from $\g$ to
%$\Der(\h)$. We only need to prove that $B=\huaB_{*e_H}$ is a relative
%Rota-Baxter operator on $\g$ with respect to the action $(\h;\phi)$.
 For all $u, v\in\h$, we have
\begin{eqnarray*}
&&[B(u), B(v)]_{\g}\\
&=&\frac{d^2}{dtds}\bigg|_{t,s=0}\exp_G(tB(u))\cdot_G\exp_G(sB(v))\cdot_G\exp_G(-tB(u))\\
&=&\frac{d^2}{dtds}\bigg|_{t,s=0}\huaB(\exp_H(tu))\cdot_G\huaB(\exp_H(sv))\cdot_G(\huaB(\exp_H(tu)))^{-1}\\
&=&\frac{d^2}{dtds}\bigg|_{t,s=0}\huaB(\exp_H(tu))\cdot_G\huaB(\exp_H(sv))\cdot_G\huaB\Big(\Phi(\huaB(\exp_H(tu))^{-1})\exp_H(-tu)\Big)\\
&=&\frac{d^2}{dtds}\bigg|_{t,s=0}\huaB\Big(\exp_H(tu)\cdot_H\Phi(\huaB(\exp_H(tu)))\exp_H(sv)\Big)\cdot_G\huaB\Big(\Phi(\huaB(\exp_H(tu))^{-1})\exp_H(-tu)\Big)\\
&=&\frac{d^2}{dtds}\bigg|_{t,s=0}\huaB\Big(\Ad_{\exp_H(tu)}(\Phi(\huaB(\exp_H(tu)))\exp_H(sv))\cdot_H(\exp_H(tu)\cdot_H\Phi(\Ad_{\huaB(\exp_H(tu))}\huaB(\exp_H(sv)))\exp_H(-tu))\Big)\\
&=&\huaB_{*e_G}\frac{d^2}{dtds}\bigg|_{t,s=0}\Ad_{\exp_H(tu)}(\Phi(\huaB(\exp_H(tu)))\exp_H(sv))\cdot_H(\exp_H(tu)\cdot_H\Phi(\Ad_{\huaB(\exp_H(tu))}\huaB(\exp_H(sv)))\exp_H(-tu))\\
&=&\huaB_{*e_G}\frac{d^2}{dtds}\bigg|_{t,s=0}\Big(\Ad_{\exp_H(tu)}\Phi(\huaB(\exp_H(tu)))\exp_H(sv)\Big)\\
&&+\huaB_{*e_G}\frac{d^2}{dtds}\bigg|_{t,s=0}\exp_H(tu)\cdot_H\Phi(\Ad_{\huaB(\exp_H(tu))}\huaB(\exp_H(sv)))\exp_H(-tu)\\
&=&\huaB_{*e_G}\Big(\frac{d^2}{dtds}\bigg|_{t,s=0}\Ad_{\exp_H(tu)}\exp_H(sv)+ \frac{d^2}{dtds}\bigg|_{t,s=0}\Phi(\huaB(\exp_H(tu)))\exp_H(sv)\Big)\\
&&+\huaB_{*e_G}\frac{d^2}{dsdt}\bigg|_{t,s=0}\exp_H(tu)\cdot_H\Phi(\Ad_{\huaB(\exp_H(tu))}\huaB(\exp_H(sv)))\exp_H(-tu)\\
&=&\huaB_{*e_G}\Big([u,v]_\h+ \phi(\huaB_{*e_G}(u))v\Big)+\huaB_{*e_G}\frac{d^2}{dsdt}\bigg|_{t,s=0}\Phi(\Ad_{\huaB(\exp_H(tu))}\huaB(\exp_H(sv)))\exp_H(-tu)\\
&=&\huaB_{*e_G}\Big([u,v]_\h+\phi(\huaB_{*e_G}(u))v\Big)+\huaB_{*e_G}\frac{d^2}{dsdt}\bigg|_{t,s=0}\Phi(\huaB(\exp_H(sv)))\exp_H(-tu)\\
&=&\huaB_{*e_G}\Big([u,v]_\h+ \phi(\huaB_{*e_G}(u))v\Big)-\huaB_{*e_G}(\phi(\huaB_{*e_G}(v))u)\\
&=&B([u,v]_\h+\phi(B(u))v-\phi(B(v))u).
\end{eqnarray*}
Thus $B$ is a relative Rota-Baxter operator.
\end{proof}

The above result extends to the level of morphisms:

\begin{pro}\label{pro:diff-functor}
Let $\huaB$ and $\huaB'$ be relative Rota-Baxter operators on a Lie group $G$ with respect to an action $(H;\Phi)$, and $(\Psi_G, \Psi_H)$ be a homomorphism from $\huaB'$ to $\huaB$. Then $((\Psi_{G})_{*e_G}, (\Psi_{H})_{*e_H})$ is a homomorphism from the relative Rota-Baxter operator $\huaB'_{*e_H}$ to $\huaB_{*e_H}$.
\end{pro}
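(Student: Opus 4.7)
The plan is to verify, at the Lie algebra level, the two conditions \eqref{hom-rbo1} and \eqref{hom-rbo2} required for $((\Psi_G)_{*e_G},(\Psi_H)_{*e_H})$ to be a homomorphism from $\huaB'_{*e_H}$ to $\huaB_{*e_H}$, plus the routine fact that the differentials of Lie group homomorphisms are Lie algebra homomorphisms (which is standard and will be cited rather than reproved). The key inputs are \eqref{hom-rbo-gp1}, \eqref{hom-rbo-gp2}, and the definition $B=\huaB_{*e_H}$ from Theorem \ref{thmhuaBb}, together with the chain rule applied at the identity.

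For the analog of \eqref{hom-rbo1}, I would simply differentiate \eqref{hom-rbo-gp1} at $e_H$. Writing $B=\huaB_{*e_H}$ and $B'=\huaB'_{*e_H}$, applying $\frac{d}{dt}\big|_{t=0}$ to the identity $\huaB(\Psi_H(\exp_H(tu)))=\Psi_G(\huaB'(\exp_H(tu)))$ and using the chain rule yields
\[
B\circ (\Psi_H)_{*e_H}(u)=(\Psi_G)_{*e_G}\circ B'(u),\qquad \forall u\in\h,
\]
which is exactly the desired compatibility. The definition of $\phi$ in \eqref{eq:relation} (and the analogous one coming from the differentiated action on the target side, which in our setting is the same $\phi$ because the action $\Phi$ is fixed) ensures that the statement makes sense in terms of the already-constructed relative Rota-Baxter operators on $\g$.

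For the analog of \eqref{hom-rbo-gp2} at the Lie algebra level, I would differentiate the group-level identity $\Psi_H(\Phi(g)h)=\Phi(\Psi_G(g))\Psi_H(h)$ in both variables. Substituting $g=\exp_G(sx)$ and $h=\exp_H(tv)$, differentiating first in $t$ at $t=0$ to bring out $v$ on both sides, and then differentiating in $s$ at $s=0$ to bring out $x$, gives
\[
(\Psi_H)_{*e_H}\bigl(\phi(x)v\bigr)=\phi\bigl((\Psi_G)_{*e_G}(x)\bigr)\,(\Psi_H)_{*e_H}(v),
\]
which is exactly the required compatibility. Here one uses the Leibniz rule recalled at the start of Section \ref{sec:ve} together with the fact that $\tilde{\Phi}_{*e_G}=\phi$ by construction.

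I do not expect a serious obstacle in this proposition: both conditions follow from a single application of the chain rule (for the first) and from a straightforward two-variable differentiation (for the second). The only point that requires a line of care is noting that $\Phi$ restricted to act through $\Aut(H)$ differentiates to $\phi$ as in diagram \eqref{eq:relation}, so that when one writes the differentiated compatibility one obtains the intrinsic bracket and action of $\g$ on $\h$ rather than some group-level remnants. Once this identification is made, the proof reduces to one display for each of \eqref{hom-rbo1} and \eqref{hom-rbo2}.
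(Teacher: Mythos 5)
Your proposal is correct and follows essentially the same route as the paper: both verify \eqref{hom-rbo1} by differentiating \eqref{hom-rbo-gp1} at the identity via the chain rule, and both obtain the Lie-algebra-level compatibility \eqref{hom-rbo2} by substituting $g=\exp_G(sx)$, $h=\exp_H(tu)$ into \eqref{hom-rbo-gp2} and differentiating in both parameters, using that $\tilde{\Phi}_{*e_G}=\phi$. No gaps.
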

\begin{proof}
We denote  $((\Psi_{G})_{*e_G}, (\Psi_{H})_{*e_H})$ by $(\psi_\g, \psi_\h)$ and $\huaB'_{*e_H}=B', \huaB_{*e_H}=B$. Since $\Psi_G: G\lon G$ and $\Psi_H: H\lon H$ are Lie group homomorphisms, thus $\psi_\g: \g\lon\g$ and $\psi_\h: \h\lon\h$ are Lie algebra homomorphisms. By \eqref{hom-rbo-gp1}, we have $B\circ\psi_{\h}=\psi_{\g}\circ B'$.  Then we have
\begin{eqnarray*}
\frac{d}{ds}\frac{d}{dt}\bigg|_{s,t=0}\Psi_{H}(\Phi(\exp_G(sx))\exp_H(tu))
=\psi_{\h}\frac{d}{ds}\frac{d}{dt}\bigg|_{t,s=0}\Phi(\exp_G(sx))\exp_H(tu)=\psi_{\h}(\phi(x)u)
\end{eqnarray*}
and
\begin{eqnarray*}
\frac{d}{ds}\frac{d}{dt}\bigg|_{t,s=0}\Phi(\Psi_G(\exp_{G}(sx)))\Psi_H(\exp_H(tu))
&=&\frac{d}{ds}\frac{d}{dt}\bigg|_{t,s=0}\Phi(\exp_{G}(s\psi_{\g}(x)))\exp_{H}(t\psi_{\h}(u))\\
&=&\phi(\psi_{\g}(x))\psi_{\h}(u), \quad \forall x\in\g, u\in\h.
\end{eqnarray*}
By \eqref{hom-rbo-gp2}, we have $\psi_{\h}(\phi(x)u)=\phi(\psi_{\g}(x))\psi_{\h}(u)$, which implies that $((\Psi_{G})_{*e_G}, (\Phi_{H})_{*e_H})$ is a homomorphism from $\huaB'_{*e_H}$ to $\huaB_{*e_H}$.
\end{proof}

Given Lie groups $G, H$ and their Lie algebras $\g, \h$ respectively,
then the above differentiation procedure gives us a functor \begin{equation}\label{eq:diff}\Diff:
\RBGH \to \RBgh. \end{equation}

Actually the set of homomorphisms between relative Rota-Baxter
operators on Lie groups and the set of homomorphisms between the
differentiated relative Rota-Baxter operators on the corresponding Lie
algebras are isomorphic. First we need a lemma:

\begin{lem}\label{pro-imporpair}
Let $G$ and $H$ be connected Lie groups whose Lie algebras are  $\g$ and $\h$ respectively. Let $\Phi: G\rightarrow\Aut(H)$ be an action
  $G$ on  $H$ and $\phi:\g\lon\Der(\h)$ be the induced  action of $\g$ on $\h$. Let  $\Psi_G:G\lon G$ and $\Psi_H: H\lon H$ be Lie groups homomorphisms and $\psi_\g:\g\lon\g$ and $\psi_\h:\h\lon\h$ be the induced Lie algebra homomorphisms. If $\psi_\g$ and $\psi_\h$  satisfies \eqref{hom-rbo2}.
Then $\Psi_G$ and $\Psi_H$ satisfy  \eqref{hom-rbo-gp2}, i.e.
\begin{equation*}
\Phi(\Psi_G(g))\Psi_H(h)=\Psi_H(\Phi(g)h), \quad \forall g\in G, h\in H.
\end{equation*}
\end{lem}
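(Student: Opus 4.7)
The plan is a two-step Lie II style argument: first differentiate in the $H$-variable (using connectedness of $H$) to reduce the desired group-level identity to an identity in $\End(\h)$ parametrized by $g\in G$; then handle the $G$-variable by verifying this reduced identity on $\exp_G(\g)$ and extending via connectedness of $G$. The two connectedness hypotheses are thus used at distinct stages.

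For the first step, fix $g\in G$. Since $\Phi(\Psi_G(g))$ and $\Phi(g)$ lie in $\Aut(H)$ and $\Psi_H$ is a Lie group homomorphism, both maps $h\mapsto \Phi(\Psi_G(g))\Psi_H(h)$ and $h\mapsto \Psi_H(\Phi(g)h)$ are Lie group homomorphisms $H\to H$. As $H$ is connected, two such homomorphisms agree if and only if their differentials at $e_H$ coincide. Using the notation of \eqref{eq:relation}, these differentials are $\tilde{\Phi}(\Psi_G(g))\circ \psi_\h$ and $\psi_\h\circ \tilde{\Phi}(g)$ respectively. Hence the lemma reduces to proving, for every $g\in G$, the identity $\tilde{\Phi}(\Psi_G(g))\circ \psi_\h = \psi_\h\circ \tilde{\Phi}(g)$ in $\End(\h)$.

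For the second step, I first establish this reduced identity on $\exp_G(\g)$. For $x\in\g$, the standard naturality $\Psi_G\circ \exp_G = \exp_G\circ \psi_\g$ together with \eqref{eq:relation} gives $\tilde{\Phi}(\Psi_G(\exp_G(x))) = \exp(\phi(\psi_\g(x)))$ and $\tilde{\Phi}(\exp_G(x)) = \exp(\phi(x))$, the operator exponentials being taken in $\End(\h)$. The hypothesis \eqref{hom-rbo2} reads $\psi_\h\circ \phi(x) = \phi(\psi_\g(x))\circ \psi_\h$. Iterating this intertwining relation yields $\psi_\h\circ \phi(x)^n = \phi(\psi_\g(x))^n\circ \psi_\h$ for every $n\geq 0$, and summing the power series delivers $\exp(\phi(\psi_\g(x)))\circ \psi_\h = \psi_\h\circ \exp(\phi(x))$, i.e., the required identity at $g=\exp_G(x)$.

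To finish, set $S := \{g\in G \mid \tilde{\Phi}(\Psi_G(g))\circ \psi_\h = \psi_\h\circ \tilde{\Phi}(g)\}$. Using multiplicativity of both $\tilde{\Phi}\circ \Psi_G$ and $\tilde{\Phi}$, if $g_1,g_2\in S$ then $\tilde{\Phi}(\Psi_G(g_1g_2))\circ \psi_\h = \tilde{\Phi}(\Psi_G(g_1))\circ \tilde{\Phi}(\Psi_G(g_2))\circ \psi_\h = \tilde{\Phi}(\Psi_G(g_1))\circ \psi_\h\circ \tilde{\Phi}(g_2) = \psi_\h\circ \tilde{\Phi}(g_1)\circ \tilde{\Phi}(g_2) = \psi_\h\circ \tilde{\Phi}(g_1g_2)$, so $S$ is closed under products; a similar calculation handles inverses. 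Thus $S$ is a subgroup containing $\exp_G(\g)$, and connectedness of $G$ forces $S=G$. The only real obstacle is bookkeeping: $\psi_\h$ need not be invertible, so one must phrase everything as an intertwining relation rather than a conjugation, and must carefully separate the use of connectedness of $H$ (Step 1) from that of $G$ (the final step); with that discipline in place, each individual step is a standard Lie-theoretic uniqueness argument.
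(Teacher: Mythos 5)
Your proof is correct and rests on the same two pillars as the paper's: the operator identity $\psi_\h\circ\exp_{\Aut(\h)}(\phi(x))=\exp_{\Aut(\h)}(\phi(\psi_\g(x)))\circ\psi_\h$ obtained from \eqref{hom-rbo2} by iterating and summing the power series, followed by an extension from exponentials to all of $G$ and $H$ via connectedness. Your bookkeeping is a clean reorganization of the paper's argument --- replacing its ``write $g$ and $h$ as products of small elements'' step by the uniqueness of a homomorphism of a connected group with prescribed differential (for the $H$-variable) and a subgroup argument (for the $G$-variable) --- but it is essentially the same proof.
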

\begin{proof}
We first show that  \eqref{hom-rbo-gp2} holds for $g$ and $h$ in a small neighborhood
 of identities. In fact,  by writing $g=\exp_G x$ and $h=\exp_H u$   and using
 \eqref{hom-rbo2} and Diagram \eqref{eq:relation}, we have
\begin{eqnarray*}
\Psi_H(\Phi(\exp_G(x))\exp_H(u))&=&\Psi_H(\exp_H(\tilde{\Phi}(\exp_G(x))u))=\Psi_H(\exp_H(\exp_{\Aut(\h)}(\phi(x))u))\\
&=&\exp_H(\psi_\h(\exp_{\Aut(\h)}(\phi(x))u))=\exp_H(\exp_{\Aut(\h)}(\phi(\psi_\g(x)))\psi_\h(u))\\
&=&\exp_H(\tilde{\Phi}(\exp_G(\psi_\g(x)))\psi_\h(u))=\Phi(\Psi_G(\exp_G(x)))\exp_H(\psi_\h(u))\\
&=&\Phi(\Psi_G(\exp_G(x)))\Psi_H(\exp_H(u)).
\end{eqnarray*} Since $G$ and $H$ are connected, we can
  write
$G=\cup_{n\in\mathbb{N}}U_{G}^{n}$ and
$H=\cup_{n\in\mathbb{N}}U_{H}^{n}$,  for some small open set $U_G\subseteq G$ and $U_H\subseteq H$
containing $e_G$ and $e_H$ respectively.  Then $\Psi_H$ being a Lie group homomorphism
 and $\Phi$ being an action of Lie groups imply that for $g=\exp_G x$, and $h=h_1
 \cdot \dots \cdot h_k$, but each $h_j$ very small,
 \eqref{hom-rbo-gp2} still holds; finally, the same facts imply that for
 $g=g_1 \cdot \dots \cdot g_l$, with each $g_j$ very small, and any
 $h\in H$,
  \eqref{hom-rbo-gp2} still holds. Thus we
 have shown that  \eqref{hom-rbo-gp2} holds in general.
\end{proof}

\begin{thm}
Let $G$ and $H$ be connected, simply connected Lie groups, $\huaB'$ and $\huaB$ be relative Rota-Baxter operators on $G$ with respect to the action $(H; \Phi)$. Then $$\Hom_\RBGH(\huaB',\huaB)\cong\Hom_\RBgh(\Diff(\huaB'),\Diff(\huaB)).$$
\end{thm}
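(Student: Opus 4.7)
The plan is to show that the functor $\Diff$ of \eqref{eq:diff} restricts to a bijection between $\Hom_{\RBGH}(\huaB',\huaB)$ and $\Hom_{\RBgh}(\Diff(\huaB'),\Diff(\huaB))$. Injectivity follows from Lie's second theorem: two Lie group homomorphisms on a connected Lie group that share the same differential at the identity must agree, so $(\Psi_G,\Psi_H)$ is determined by $((\Psi_G)_{*e_G},(\Psi_H)_{*e_H})$. For surjectivity, given $(\psi_\g,\psi_\h)\in\Hom_{\RBgh}(\Diff(\huaB'),\Diff(\huaB))$, I would first integrate $\psi_\g$ and $\psi_\h$ to unique Lie group homomorphisms $\Psi_G:G\to G$ and $\Psi_H:H\to H$ via Lie II, which is legitimate because $G$ and $H$ are connected and simply connected. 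Condition \eqref{hom-rbo-gp2} for $(\Psi_G,\Psi_H)$ is then immediate from Lemma \ref{pro-imporpair}, since $(\psi_\g,\psi_\h)$ satisfies the infinitesimal analogue \eqref{hom-rbo2}.

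The remaining and principal task is to verify \eqref{hom-rbo-gp1}, namely $\huaB\circ\Psi_H=\Psi_G\circ\huaB'$. The obstacle is that one cannot integrate the infinitesimal identity $\psi_\g\circ B'=B\circ\psi_\h$ directly, because neither $\huaB$ nor $\huaB'$ is a Lie group homomorphism between the ambient groups $(H,\cdot_H)$ and $G$. I would bypass this by passing to the semi-direct product and using the graph description of Proposition \ref{graphgroup}.

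Concretely, the two compatibilities satisfied by $(\psi_\g,\psi_\h)$ combine to make the direct sum map $\psi_\g\oplus\psi_\h$ a Lie algebra endomorphism of $\g\ltimes_\phi\h$, and the identity $\psi_\g\circ B'=B\circ\psi_\h$ says precisely that it sends the subalgebra $Gr(B')$ into $Gr(B)$. With \eqref{hom-rbo-gp2} established, the pair $(\Psi_G,\Psi_H)$ similarly assembles into a Lie group endomorphism of $G\ltimes_\Phi H$ whose differential at the identity is $\psi_\g\oplus\psi_\h$. Since $Gr(\huaB')\cong H$ is connected and simply connected with Lie algebra $Gr(B')$, Lie II produces a unique Lie group homomorphism $Gr(\huaB')\to Gr(\huaB)$ integrating the restriction $\psi_\g\oplus\psi_\h|_{Gr(B')}$; by uniqueness of integration this morphism must coincide with the restriction of $(\Psi_G,\Psi_H)$ to $Gr(\huaB')$. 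Hence $(\Psi_G(\huaB'(h)),\Psi_H(h))\in Gr(\huaB)$ for every $h\in H$, which reads off as $\Psi_G(\huaB'(h))=\huaB(\Psi_H(h))$, i.e.\ \eqref{hom-rbo-gp1}, completing the surjectivity argument.

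The hardest step is the last one: translating the Rota-Baxter compatibility into a subgroup statement inside the semi-direct product is what makes Lie II applicable, since the naive integration strategy is blocked by $\huaB,\huaB'$ failing to be group homomorphisms between $(H,\cdot_H)$ and $G$. The conceptual payoff is that the proof runs entirely at the level of ordinary Lie groups/algebras, with the Rota-Baxter structure encoded solely through the graph.
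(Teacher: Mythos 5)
Your proposal is correct and follows essentially the same route as the paper: both reduce the group-level identity $\Psi_G\circ\huaB'=\huaB\circ\Psi_H$ to a graph containment inside the semi-direct product $G\ltimes_\Phi H$, integrate $(\psi_\g,\psi_\h)$ via Lemma \ref{pro-imporpair}, and conclude by Lie II. The only cosmetic difference is at the final step, where the paper argues that the subgroup $(\Psi_G,\Psi_H)Gr(\huaB')$ has Lie algebra inside $Gr(B)$ and is therefore contained in $Gr(\huaB)$, while you invoke uniqueness of integration of homomorphisms out of the simply connected $Gr(\huaB')\cong H$ --- the same idea packaged slightly more cleanly.
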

\begin{proof}
For any $(\Psi_G, \Psi_H)\in\Hom_\RBGH(\huaB',\huaB)$, we obtain $((\Psi_G)_{*e_G},(\Psi_H)_{*e_H})\in\Hom_\RBgh(\Diff(\huaB'),\Diff(\huaB))$ via Proposition \ref{pro:diff-functor}.

Conversely, for any
$(\psi_\g,\psi_\h)\in\Hom_\RBgh(\Diff(\huaB'),\Diff(\huaB))$,
$(\psi_\g,\psi_\h)$ is a Lie algebra endomorphism of the semi-direct
product Lie algebra $\g\ltimes_{\phi}\h$ and
$(\psi_\g,\psi_\h)(Gr(B'))\subseteq Gr(B)$. By Lemma
\ref{pro-imporpair}, there exist  unique Lie group homomorphisms
$\Psi_G:G\lon G$ and $ \Psi_H: H\lon H$ such that $(\Psi_G, \Psi_H)$
is a Lie group endomorphism of the semi-direct product  Lie group
$G\ltimes_\Phi H$. Since the image $(\Psi_G,\Psi_H)Gr(\huaB')$ is a Lie subgroup
of $G\ltimes_\Phi H$ and its Lie algebra is $(\psi_\g,\psi_\h)(Gr(B'))
\subseteq Gr(B)$, it follows\footnote{It is obvious that the inclusion
happens locally near identity, the global statement follows again
using the fact that both $(\Psi_G,\Psi_H)Gr(\huaB')$ and $Gr(\huaB)$ are
connected and they can be written as products of elements near
identity. } that $(\Psi_G,\Psi_H)Gr(\huaB')\subseteq
Gr(\huaB)$.  This in turn implies that $\Psi_G\circ \huaB'=\huaB\circ \Psi_H$. Thus $(\Psi_G,\Psi_H)\in \Hom_\RBGH(\huaB',\huaB)$, and
$\Hom_\RBGH(\huaB',\huaB)\cong\Hom_\RBgh(\Diff(\huaB'),\Diff(\huaB)).$
\end{proof}

\subsection{The Van Est homomorphism for the cohomologies}
 Now we extend the above differentiation  to the level of cohomology. For
this, let us consider the relation between the descendent Lie group of the relative Rota-Baxter operator $\huaB$ and the descendent Lie algebra of the  relative Rota-Baxter operator $\Diff(\huaB)$.

\begin{pro}\label{pro:descendent-diff}
Let  $\huaB$ be a relative Rota-Baxter operator on $G$ with respect to
the action $(H;\Phi)$.   Then the Lie algebra
$(\h,[\cdot,\cdot]_\star)$ of the descendent Lie group $(H, e_H,
\star)$ is the descendent Lie algebra $(\h,
[\cdot,\cdot]_{\Diff(\huaB)})$ of the relative Rota-Baxter operator
$\Diff(\huaB)$ on the level of Lie algebras.
\emptycomment{ \begin{equation*}
[u, v]_B=\phi(B(u))v-\phi(B(v))u+[u,v]_\h, \quad\forall u, v\in\h.
\end{equation*}
This proposition can be well explained by the following diagram:
 $$
\xymatrix@!0{
  & G
      &  & H \ar[ll]_{\huaB}        \\
  G \ar@{=}[ru]
      &  & H \ar@{=}[ru]\ar[ll]_{\quad\huaB} \\
  & \g\ar@{.>}[uu]^{\exp} \ar@{=}[ld]
      &  & \h\ar@{.>}[ll]_{ B\quad}\ar[uu]_{\vspace{-1cm}\exp} \ar@{=}[ld]               \\
  \g \ar[uu]^{\exp}
      &  & \h .\ar[uu]_{\Exp} \ar[ll]_{B}      }
$$
}
\end{pro}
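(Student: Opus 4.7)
The plan is to compute the bracket $[\cdot,\cdot]_\star$ on $\h = T_{e_H}H$ coming from the group multiplication $\star$ and to show directly that it agrees with the descendent bracket
\[
[u,v]_B=\phi(B(u))v-\phi(B(v))u+[u,v]_\h,\qquad B=\huaB_{*e_H}.
\]
Since both Lie algebra structures live on the same underlying vector space $\h$ (the identity of $(H,\star)$ is still $e_H$, so the tangent spaces are canonically identified), it is enough to compare the brackets on pairs of vectors $u,v\in\h$ via the second-order Taylor expansion of $\star$ in the chart provided by $\exp_H$.

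First I recall the standard fact that for any Lie group $(H,\star)$ with identity $e_H$, if we write
\[
\exp_H(tu)\,\star\,\exp_H(sv)=\exp_H\!\bigl(tu+sv+ts\,S(u,v)+\text{(higher order in }t,s)\bigr),
\]
then $S$ is bilinear and $[u,v]_\star=S(u,v)-S(v,u)$. Thus I only need to extract the $ts$-coefficient on the right-hand side of
\[
\exp_H(tu)\,\star\,\exp_H(sv)\;=\;\exp_H(tu)\cdot_H\Phi\bigl(\huaB(\exp_H(tu))\bigr)\exp_H(sv).
\]

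Next I expand each factor. Using $B=\huaB_{*e_H}$, we have $\huaB(\exp_H(tu))=\exp_G\bigl(tB(u)+O(t^2)\bigr)$. Passing to the associated homomorphism $\tilde\Phi:G\to\Aut(\h)$ from Diagram \eqref{eq:relation}, whose differential is $\phi$, gives
\[
\tilde\Phi\bigl(\huaB(\exp_H(tu))\bigr)=\exp_{\Aut(\h)}\!\bigl(t\phi(B(u))+O(t^2)\bigr)=\Id_\h+t\,\phi(B(u))+O(t^2).
\]
Since $\Phi(g)\circ\exp_H=\exp_H\circ\tilde\Phi(g)$, this yields
\[
\Phi\bigl(\huaB(\exp_H(tu))\bigr)\exp_H(sv)=\exp_H\!\bigl(sv+ts\,\phi(B(u))v+O\bigr),
\]
where $O$ collects terms of order $\geq 3$ in $(t,s)$ or of the form $O(t^2 s)$. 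Now I apply the Baker--Campbell--Hausdorff formula for $\cdot_H$ to combine this with $\exp_H(tu)$; keeping only linear and bilinear terms in $(t,s)$ I obtain
\[
\exp_H(tu)\,\star\,\exp_H(sv)=\exp_H\!\Bigl(tu+sv+ts\bigl(\phi(B(u))v+\tfrac{1}{2}[u,v]_\h\bigr)+O\Bigr).
\]
Hence $S(u,v)=\phi(B(u))v+\tfrac{1}{2}[u,v]_\h$, and antisymmetrising gives
\[
[u,v]_\star=S(u,v)-S(v,u)=\phi(B(u))v-\phi(B(v))u+[u,v]_\h=[u,v]_B,
\]
which is exactly the descendent bracket associated to $\Diff(\huaB)=B$ from Corollary \ref{newliealg}.

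The only delicate step is the bookkeeping in the BCH expansion together with the use of the exponential-intertwining identity $\Phi(g)\circ\exp_H=\exp_H\circ\tilde\Phi(g)$; once these are in place the computation is forced. I expect no genuine obstacle beyond carefully justifying that all error terms are of order $t^2$, $s^2$, or higher in $ts$, so that they do not contribute to $S(u,v)$. As a sanity check, one could alternatively derive the bracket by differentiating the conjugation $h_1\mapsto h_1\star h_2\star h_1^{\dag}$ using \eqref{eq:dag}, which would give the same answer through a slightly longer calculation.
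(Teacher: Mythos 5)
Your proposal is correct, but it takes a genuinely different route from the paper. The paper computes $[u,v]_\star$ as the mixed second derivative of the commutator curve $\Exp(tu)\star\Exp(sv)\star\Exp(-tu)$, where $\Exp$ is the exponential of the descendent group $(H,\star)$; it then rewrites each $\star$-product in terms of $\cdot_H$ and $\Phi$, regroups the expression into an $\Ad_{\Exp(tu)}$-part and an action part, and evaluates the pieces using the Leibniz rule for two-parameter families --- exactly mirroring the computation in Theorem \ref{thmhuaBb}. You instead never touch $\Exp$: you use $\exp_H$ purely as a chart for the manifold $H$ near $e_H$, invoke the standard fact that the antisymmetrization of the bilinear second-order term of any group law in such a chart is the Lie bracket, and read off that term from the first-order expansion of $\tilde\Phi\circ\huaB\circ\exp_H$ together with BCH for $\cdot_H$. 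Your key identities ($\Phi(g)\circ\exp_H=\exp_H\circ\tilde\Phi(g)$ from $\Phi(g)\in\Aut(H)$, and $\huaB(\exp_H(tu))=\exp_G(tB(u)+O(t^2))$) are both sound, and the bookkeeping of error terms works out: the only $ts$-contributions are $\phi(B(u))v$ from the action and $\tfrac12[u,v]_\h$ from BCH, so $S(u,v)-S(v,u)=\phi(B(u))v-\phi(B(v))u+[u,v]_\h$ as required. What each approach buys: yours isolates the bracket computation from any knowledge of the descendent exponential and is conceptually lighter (one bilinear term to extract); the paper's is uniform with its other differentiation arguments (Theorem \ref{thmhuaBb} and Proposition \ref{pro:rep-diff}) and avoids having to justify the chart-expansion lemma and the decomposition of error orders, which is the one place in your argument that deserves the careful justification you yourself flag.
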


\begin{proof}
We denote  by $\Exp$ the exponential map for the Lie group $(H, e_H,
\star)$. For all $u, v\in\h$, with the help of Proposition \ref{pro:dec-gp}, we have
\begin{eqnarray*}
&&[u, v]_\star\\
&=&\frac{d^2}{dtds}\bigg|_{t,s=0}\Exp(tu)\star\Exp(sv)\star\Exp(-tu)\\
&=&\frac{d^2}{dtds}\bigg|_{t,s=0}\Big(\Exp(tu))\cdot_H\Phi(\huaB(\Exp(sv)))\Exp(sv)\Big)\star\Exp(-tu)\\
&=&\frac{d^2}{dtds}\bigg|_{t,s=0}\Big(\Exp(tu))\cdot_H\Phi(\huaB(\Exp(sv)))\Exp(sv)\Big)\star\Phi(\huaB(\Exp(tu))^{-1})(\Exp(tu))^{-1}\\
&=&\frac{d^2}{dtds}\bigg|_{t,s=0}\Big(\Exp(tu)\cdot_H\Phi(\huaB(\Exp(tu)))\Exp(sv)\Big)\cdot_H\Phi(\Ad_{\huaB(\Exp(tu))}\huaB(\Exp(sv)))(\Exp(tu))^{-1}\\
&=&\frac{d^2}{dtds}\bigg|_{t,s=0}\Big(\Ad_{\Exp(tu)}(\Phi(\huaB(\Exp(tu)))\Exp(sv))\Big)\cdot_H\Big(\Exp(tu)\cdot_H\Phi(\Ad_{\huaB(\Exp(tu))}\huaB(\Exp(sv)))(\Exp(tu))^{-1}\Big)\\
&=&\frac{d^2}{dtds}\bigg|_{t,s=0}\Big(\Ad_{\Exp(tu)}\Phi(\huaB(\Exp(tu)))\Exp(sv)\Big)\\
&&+\frac{d^2}{dtds}\bigg|_{t,s=0}\Exp(tu)\cdot_H\Phi(\Ad_{\huaB(\Exp(tu))}\huaB(\Exp(sv)))(\Exp(tu))^{-1}\\
&=&\frac{d^2}{dtds}\bigg|_{t,s=0}\Ad_{\Exp(tu)}\Exp(sv)+\frac{d^2}{dtds}\bigg|_{t,s=0}\Phi(\huaB(\Exp(tu)))\Exp(sv)\\
&&+\frac{d^2}{dsdt}\bigg|_{t,s=0}\Exp(tu)\cdot_H\Phi(\Ad_{\huaB(\Exp(tu))}\huaB(\Exp(sv)))(\Exp(tu))^{-1}\\
&=&[u,v]_\h+\phi(\huaB_{*e_H}(u))v+\frac{d^2}{dsdt}\bigg|_{t,s=0}\Phi(\Ad_{\huaB(\Exp(tu))}\huaB(\Exp(sv)))(\Exp(tu))^{-1}\\
&=&[u,v]_\h+\phi(\huaB_{*e_H}(u))v+\frac{d^2}{dsdt}\bigg|_{t,s=0}\Phi(\huaB(\Exp(sv)))(\Exp(tu))^{-1}\\
&=&[u,v]_\h+\phi(\huaB_{*e_H}(u))v-\phi(\huaB_{*e_H}(v))u
%&=&[u,v]_\h+\phi(\Diff(\huaB)(u))v-\phi(\Diff(\huaB)(v))u.
\end{eqnarray*}
Therefore, $[\cdot,\cdot]_\star=[\cdot,\cdot]_{\Diff(\huaB)}$, which implies that the Lie algebra  of the descendent Lie group $(H, e_H, \star)$ is the descendent Lie algebra   of the relative Rota-Baxter operator $\Diff(\huaB) $.
\end{proof}

By Theorem \ref{groupact}, we know that $\Theta$ is an action of the descendent Lie group $(H, e_H, \star)$ on $G$. By Lemma \ref{lem:rep-on-g}, $\Theta:H\lon GL(\g)$ is an action of the descendent Lie group $(H, e_H, \star)$ on the vector space $\g$. Taking the differentiation, we have the following result.

\begin{pro}\label{pro:rep-diff}
Let  $\huaB:H\lon G$ be a relative Rota-Baxter operator on $G$ with respect to an action $(H;\Phi)$.   Then the differentiation of the action $\Theta:H\lon GL(\g)$ of the descendent Lie group $(H, e_H, \star)$ on $\g$ is exactly the representation  $\theta: \h\lon\gl(\g)$  of the descendent Lie  algebra $(\h,[\cdot,\cdot]_{\Diff(\huaB) })$ on $\g$ given in Proposition \ref{pro:rep-on-g-alg}. That is
\begin{equation*}
\Theta_{*e_H}(u)x=\Diff(\huaB) (\phi(x)u)+[\Diff(\huaB) (u), x]_\g, \quad \forall x\in\g, u\in\h.
\end{equation*}
\end{pro}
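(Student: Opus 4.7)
I would prove this by interpreting $\Theta_{*e_H}(u)(x)$ as the mixed second derivative $\frac{d^2}{dt\,ds}\big|_{t=s=0}\Theta(\Exp(tu))(\exp_G(sx))$ (read as a tangent vector at $e_G$ via $\exp_G^{-1}$) and computing it directly from \eqref{eqactionH}. The crucial convenience is to use the exponential map $\Exp$ of the descendent Lie group $(H, e_H, \star)$ rather than $\exp_H$: indeed $\Exp(tu)^{\dag}=\Exp(-tu)$, which trivialises the dagger appearing in \eqref{eqactionH}, and since $\huaB\colon(H,\star)\to(G,\cdot_G)$ is a Lie group homomorphism (Proposition \ref{pro:dec-gp}) whose differential at $e_H$ is $B$ (combining Theorem \ref{thmhuaBb} and Proposition \ref{pro:descendent-diff}), we have $\huaB(\Exp(\pm tu))=\exp_G(\pm tB(u))$. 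With these substitutions \eqref{eqactionH} becomes
\begin{equation*}
\Theta(\Exp(tu))(\exp_G(sx))=A(t,s)\cdot_G \exp_G(sx)\cdot_G \exp_G(-tB(u)),
\end{equation*}
where $A(t,s):=\huaB\bigl(\Phi(\exp_G(sx))\,\Exp(-tu)\bigr)^{-1}$, and $\theta(u)x$ is the $ts$-coefficient of this triple product in the $\exp_G$-chart around $e_G$.

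The computation then splits into two steps. First, determine the $ts$-coefficient $a_{ts}$ of $A(t,s)$ in the chart. Because $\Phi(g)\,e_H=e_H$ for every $g$, one has $A(0,s)=e_G$ identically, so $A(t,s)=tB(u)+ts\,a_{ts}+(\text{higher order})$ with no pure-$s$ term. Applying the chain rule to $\huaB$ evaluated along the $H$-valued curve $E(t,s):=\Phi(\exp_G(sx))\,\Exp(-tu)$, the Hessian of $\huaB$ drops out because $\partial_s E\vert_{(0,0)}=0$, reducing the calculation to $B\bigl(\partial_t\partial_s E\vert_{(0,0)}\bigr)$. The Leibniz rule for the action map $\Delta(g,h)=\Phi(g)h$ yields $\partial_t\partial_s E\vert_{(0,0)}=-\phi(x)u$ (using that $\phi$ is the differentiated action of $\Phi$), and accounting for the group inversion produces $a_{ts}=B(\phi(x)u)$. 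Second, assemble the three factors by the Baker--Campbell--Hausdorff formula to second order. Writing $X,Y,Z$ for the $\exp_G$-coordinates of the three factors with leading terms $tB(u)$, $sx$, $-tB(u)$, the $ts$-coefficient of $X\cdot Y\cdot Z$ equals $a_{ts}$ plus the $ts$-contributions from the commutator corrections $\tfrac12[X,Y]$, $\tfrac12[Y,Z]$, $\tfrac12[X,Z]$. The cross term $\tfrac12[X,Z]$ vanishes at bilinear order since both $X$ and $Z$ are $t$-linear to leading order, while the other two each contribute $\tfrac12[B(u),x]_\g$. Summing gives $B(\phi(x)u)+[B(u),x]_\g=\theta(u)x$, as asserted.

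The principal technical obstacle is the careful bookkeeping of bilinear $ts$-terms through a non-abelian triple product together with the chain rule for $\huaB\circ\Phi\circ\Exp$. The choice of the descendent exponential $\Exp$ is precisely what makes the calculation manageable: it trivialises both the dagger and the third factor, reducing the whole computation to one chain-rule evaluation and one second-order BCH expansion.
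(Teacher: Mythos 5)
Your proof is correct and follows essentially the same route as the paper's: both substitute $\Exp(tu)^{\dag}=\Exp(-tu)$ into \eqref{eqactionH} and extract the mixed $ts$-derivative of the resulting triple product, obtaining the conjugation contribution $[B(u),x]_\g$ and the cross-term contribution $B(\phi(x)u)$ from exactly the same two sources. The only difference is bookkeeping — you expand in the $\exp_G$-chart via second-order BCH, while the paper splits the mixed partial directly with the Leibniz rule for group-valued products — and these are equivalent computations.
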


\begin{proof}
By the definition of $\Theta$, and use the above notations in
Proposition \ref{pro:diff-functor} and Proposition \ref{pro:descendent-diff},  we have
\begin{eqnarray*}
\Theta_{*e_H}(u)x
&=&\frac{d}{dt}\frac{d}{ds}\bigg|_{t,s=0}\Theta(\Exp(tu))\exp_G(sx)\\
&=&\frac{d}{dt}\frac{d}{ds}\bigg|_{t,s=0}\huaB(\Phi(\exp_G(sx))\Exp(-tu))^{-1}\cdot_{G}\exp_G(sx)\cdot_{G}\huaB(\Exp(-tu))\\
&=&\frac{d}{dt}\frac{d}{ds}\bigg|_{t,s=0}\huaB(\Exp(-tu))^{-1}\cdot_{G}\exp_G(sx)\cdot_{G}\huaB(\Exp(-tu))\\
&&+\frac{d}{dt}\frac{d}{ds}\bigg|_{t,s=0}\huaB(\Phi(\exp_G(sx))\Exp(-tu))^{-1}\cdot_{G}\huaB(\Exp(-tu))\\
&=&[\Diff(\huaB) (u),x]_\g+\frac{d}{ds}\frac{d}{dt}\bigg|_{t,s=0}\huaB(\Phi(\exp_G(sx))\Exp(-tu))^{-1}\\
&=&[\Diff(\huaB) (u),x]_\g+\Diff(\huaB) (\phi(x)u),
\end{eqnarray*}
which implies that $\Theta_{*e_H}=\theta$.
\end{proof}

Let  $\huaB:H\lon G$ be a relative Rota-Baxter operator on $G$ with
respect to an action $(H;\Phi)$, we define $\VE:C^k(\huaB)\lon C^{k}(\Diff(\huaB))$ by
\begin{eqnarray}\label{defieqVE}
&&\VE (F)(u_1,\cdots, u_{k-1})\\
\nonumber&=& \sum_{s\in \Sym(k-1)}(-1)^{|s|}\frac{d}{dt_{s(1)}}\cdots\frac{d}{dt_{s(k-1)}}\bigg|_{t_{s(1)}=t_{s(2)}=\cdots=t_{s(k-1)}=0}F(\Exp(t_{s(1)}u_{s(1)}),\cdots,\Exp(t_{s(k-1)}u_{s(k-1)})),
\end{eqnarray}
for all $F\in C^k(\huaB),~u_1,\cdots, u_{k-1}\in\h$.

\begin{thm}\label{thm:van-est1}
  With the above notations, $\VE$ is a cochain map, i.e. we have the following commutative diagram
\[
\small{ \xymatrix{
\cdots\longrightarrow C^{k-1}(\huaB) \ar[d]^{\VE} \ar[r]^{\quad\dM^{\huaB}}&
C^{k}(\huaB) \ar[d]^{\VE} \ar[r]^{\quad\dM^{\huaB}} & C^{k+1}(\huaB) \ar[d]^{\VE} \ar[r]  & \cdots  \\
\cdots\longrightarrow C^{k-1}(\Diff(\huaB) ) \ar[r]^{\quad\dM_{\CE}^{\Diff(\huaB)}} & C^{k}(\Diff(\huaB)) \ar[r]^{\quad \dM_{\CE}^{\Diff(\huaB)}} &C^{k+1}(\Diff(\huaB))\ar[r]& \cdots.}
}
\]
Consequently, $\VE$ induces a homomorphism $\VE_*$ from the cohomology
group $\huaH^{k}(\huaB)$ to $H^{k}(\Diff(\huaB))$. The map $\VE$ is called the {\bf Van Est map}.
\end{thm}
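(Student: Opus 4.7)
The plan is to recognize that $\VE$, as defined in \eqref{defieqVE}, is precisely the classical Van Est differentiation map from the smooth group cochain complex of the descendent Lie group $(H, e_H, \star)$ with coefficients in $(\g;\Theta)$ to the Chevalley-Eilenberg cochain complex of its Lie algebra with coefficients in the differentiated representation, and then to invoke the classical Van Est theorem \cite{Van}. Once we match up the two sides to the classical setting, the cochain-map property is immediate.

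First I would verify that $\VE$ really is the classical Van Est map for a matching (Lie group, Lie algebra)–module pair. By construction, $C^k(\huaB)=C^{k-1}(H,\g)$ is the smooth group cochain complex of $(H, e_H, \star)$ with coefficients in the $H$-module $(\g;\Theta)$, while $C^k(\Diff(\huaB))=\Hom(\wedge^{k-1}\h,\g)$ is the Chevalley-Eilenberg cochain complex of $(\h, [\cdot,\cdot]_{\Diff(\huaB)})$ with coefficients in the $\h$-module $(\g;\theta)$. Proposition \ref{pro:descendent-diff} identifies the Lie algebra of the descendent Lie group with the descendent Lie algebra, and Proposition \ref{pro:rep-diff} identifies the differentiated representation $\Theta_{*e_H}$ with $\theta$. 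Under these two identifications, formula \eqref{defieqVE} is precisely the standard Van Est differentiation formula, so that the classical Van Est theorem applies verbatim.

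For a self-contained verification one expands both $\dM_{\CE}^{\Diff(\huaB)}\VE(F)$ and $\VE(\dM^{\huaB}F)$ on arguments $u_1,\ldots,u_k$. The leading $\Theta(h_1)$-term of $\dM^{\huaB}$, after differentiation at $t=0$ and antisymmetrization over $\Sym(k-1)$, yields the $\theta(u_i)$-terms of $\dM_{\CE}^{\Diff(\huaB)}$; each product $h_i\star h_{i+1}$ appearing in $\dM^{\huaB}$ contributes, via the Leibniz rule and the identity
\begin{equation*}
\tfrac{\partial^2}{\partial t\,\partial s}\Big|_{0}\bigl(\Exp(tu)\star\Exp(sv)-\Exp(sv)\star\Exp(tu)\bigr)=[u,v]_{\Diff(\huaB)},
\end{equation*}
the Chevalley-Eilenberg bracket terms $f([u_i,u_j]_{\Diff(\huaB)},\ldots)$; and the extra summand $(-1)^{n+1}\alpha_n(g_1,\ldots,g_n)$ in $\dM^{\huaB}$ vanishes after antisymmetrization, since it does not depend on one of the arguments. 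The main obstacle is purely combinatorial bookkeeping: matching up signs and permutations between the antisymmetrized mixed partials and the alternating sums in $\dM_{\CE}^{\Diff(\huaB)}$. No new analytic input is required beyond the Leibniz rule and the identifications provided by Propositions \ref{pro:descendent-diff} and \ref{pro:rep-diff}.
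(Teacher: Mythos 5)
Your proposal is correct and follows essentially the same route as the paper: the paper's proof likewise uses Propositions \ref{pro:descendent-diff} and \ref{pro:rep-diff} to identify $C^*(\huaB)$ and $C^*(\Diff(\huaB))$ with the group and Chevalley--Eilenberg complexes of the descendent Lie group and Lie algebra, and then appeals to the classical Van Est argument. Your additional sketch of the direct term-by-term verification is a harmless elaboration of the same idea.
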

\begin{proof}
By Proposition \ref{pro:descendent-diff}, the differentiation of the
descendent Lie group $(H, e_H, \star)$ is the descendent Lie algebra
$(\h, [\cdot,\cdot]_{\Diff(\huaB)})$. By Proposition
\ref{pro:rep-diff}, the differentiation of the action $\Theta$
of the descendent Lie group $(H, e_H, \star)$ is the representation
$\theta$ of the descendent Lie algebra $(\h,
[\cdot,\cdot]_{\Diff(\huaB)})$. Since the cochains $C^k(\huaB)$ and
$C^k(\Diff(\huaB))$ are in fact exactly those of  the descendent Lie
group $(H, e_H, \star)$ and the  descendent Lie algebra $(\h,
[\cdot,\cdot]_{\Diff(\huaB)})$ respectively, the conclusion follows
from the classical argument for the cohomologies of Lie groups and Lie
algebras.% Nevertheless, we provide also a direct calculation in the appendix for this theorem.
\end{proof}

Just as the classical situation, under certain conditions, the cohomology group $\huaH^{k}(\huaB)$ and $H^{k}(\Diff(\huaB))$ are isomorphic.

\begin{thm}
If the Lie group $(H, e_H, \cdot_H)$ is connected and its homotopy
groups are trivial in $1, 2,\cdots, n$, then for $1\leq k\leq n$, the
cohomology group $\huaH^{k}(\huaB)$ of the relative Rota-Baxter
operator $\huaB$ on the Lie group $G$ is isomorphic to the cohomology
group $H^{k}(\Diff(\huaB))$ of the relative Rota-Baxter operator on
the level of Lie algebras.
\end{thm}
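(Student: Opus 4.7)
The plan is to reduce the statement to the classical Van Est isomorphism theorem for Lie groups and Lie algebras (see e.g.~\cite{Van,Cr}). The key observation is that, by construction, the cochain complex $(C^{*}(\huaB),\dM^{\huaB})$ is literally the smooth group cochain complex of the descendent Lie group $(H,e_{H},\star)$ with coefficients in the $H$-module $\g$ provided by the action $\Theta:H\lon GL(\g)$ of Lemma~\ref{lem:rep-on-g}. Similarly, by Proposition~\ref{pro:descendent-diff} and Proposition~\ref{pro:rep-diff}, the cochain complex $(C^{*}(\Diff(\huaB)),\dM_{\CE}^{\Diff(\huaB)})$ is literally the Chevalley--Eilenberg complex of the Lie algebra of $(H,e_{H},\star)$, which is $(\h,[\cdot,\cdot]_{\Diff(\huaB)})$, with coefficients in the differentiated representation $\theta=\Theta_{*e_{H}}$. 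Moreover, the map $\VE$ defined in \eqref{defieqVE} is precisely the standard Van Est map between these two complexes.

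The next step is to check that the hypotheses of the classical Van Est theorem apply to $(H,e_{H},\star)$. Since the descendent group structure $\star$ is defined on the same underlying smooth manifold as $(H,e_{H},\cdot_{H})$, the two Lie groups share the same homotopy type; in particular $\pi_{i}(H,e_{H},\star)=\pi_{i}(H,e_{H},\cdot_{H})=0$ for $i=1,\dots,n$ under our hypothesis. Consequently $(H,e_{H},\star)$ is $n$-connected and the classical Van Est theorem gives that the Van Est map induces an isomorphism on cohomology in degrees $0\leq j\leq n$, i.e.\ $H^{j}_{\mathrm{sm}}(H_{\star};\g)\cong H^{j}_{\CE}(\h_{\star};\g)$ in that range.

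Translating this through the identifications above yields, for every $1\leq k\leq n$, an isomorphism
\[
\VE_{*}:\huaH^{k}(\huaB)\stackrel{\cong}{\lon}H^{k}(\Diff(\huaB)),
\]
which is exactly the claim. The main (and only) subtlety is to check carefully that our indexing convention $C^{k}(\huaB)=C^{k-1}(H,\g)$ and $C^{k}(\Diff(\huaB))=\Hom(\wedge^{k-1}\h,\g)$ matches the indexing of the classical Van Est theorem; up to this bookkeeping shift there is nothing to do beyond invoking the classical result.

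No new analytic obstacle appears: the cohomology theories for $\huaB$ and $\Diff(\huaB)$ were \emph{designed} (via Lemma~\ref{lem:rep-on-g} and Proposition~\ref{pro:rep-diff}) so that the comparison map $\VE$ coincides with the classical Van Est construction. The hardest conceptual point is therefore already handled in the Van Est compatibility Theorem~\ref{thm:van-est1}; what remains here is simply to invoke the isomorphism part of the classical theorem under the connectivity assumption, after noting that the descendent group shares its homotopy type with $(H,\cdot_{H})$.
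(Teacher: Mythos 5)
Your proposal is correct and takes essentially the same route as the paper, whose entire proof is the remark that ``the conclusion follows from the classical argument for the cohomologies of Lie groups and Lie algebras.'' You in fact make explicit the one point the paper leaves implicit --- that the descendent group $(H,\star)$ shares its underlying manifold, hence its homotopy groups, with $(H,\cdot_H)$, so the classical Van Est isomorphism theorem applies to it with coefficients in $\g$ via $\Theta$.
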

\begin{proof}
As for the previous theorem, the conclusion also follows from the classical argument for the cohomologies of Lie groups and Lie algebras.
\end{proof}

\subsection{An explicit example}
At the end of this section, we give a concrete example to demonstrate the
differentiation procedure, and we also calculate a second cohomology
group in this example.

\begin{ex}{\rm
Consider the Euclidean Lie group $$G=\{X|X=\left(
                                             \begin{array}{cc}
                                               A & \alpha \\
                                               0 & 1 \\
                                             \end{array}
                                           \right), A\in\SO(n), \alpha\in \R^{n}
\}.$$ Then we have $G=G_{1}G_{2}$, where $G_{1}=\{\left(
                                                  \begin{array}{cc}
                                                    A & 0 \\
                                                    0 & 1 \\
                                                  \end{array}
                                                \right)|A\in\SO(n)\}$
and $G_{2}=\{\left(
             \begin{array}{cc}
               I_{n\times n} & \alpha \\
               0 & 1 \\
             \end{array}
           \right)|\alpha\in\R^{n}\}$.
           Since any $\left( \begin{array}{cc}
                                               A & \alpha \\
                                               0 & 1 \\
                                             \end{array}
                                           \right)  $  can be written as $\left( \begin{array}{cc}
                                               A & \alpha \\
                                               0 & 1 \\
                                             \end{array}
                                           \right)  =\left( \begin{array}{cc}
                                               A & 0\\
                                               0 & 1 \\
                                             \end{array}
                                           \right) \left( \begin{array}{cc}
                                               I_{n\times n} & A^T\alpha \\
                                               0 & 1 \\
                                             \end{array}
                                           \right),$
                                           by  Example \ref{lem:RBPG},  the map $\huaB: G\lon G$ defined by
           \begin{equation*}
           \huaB(\left( \begin{array}{cc}
                                                                             A & \alpha \\
                                                                             0 & 1 \\
                                                                           \end{array}
                                                                         \right)
           )=\left(
               \begin{array}{cc}
                 I_{n\times n} & -A^{T}\alpha \\
                 0 & 1 \\
               \end{array}
             \right)
           ,\quad \forall \left(
                             \begin{array}{cc}
                               A & \alpha \\
                               0 & 1 \\
                             \end{array}
                           \right)\in G
           \end{equation*}
  is a   Rota-Baxter operator on $G $. By Proposition \ref{pro:dec-gp}, the multiplication $\star$ in the descendent Lie group   $(G, \star)$ is given by
\begin{equation*}
\left(
  \begin{array}{cc}
    A & \alpha \\
    0 & 1 \\
  \end{array}
\right)\star\left(
              \begin{array}{cc}
                C & \beta \\
                0 & 1 \\
              \end{array}
            \right)=\left(
                      \begin{array}{cc}
                        AC & ACA^{T}\alpha+A\beta \\
                        0 & 1 \\
                      \end{array}
                    \right)
\end{equation*}
and the inverse is given by  $\left(
       \begin{array}{cc}
         A & \alpha \\
         0 & 1 \\
       \end{array}
     \right)^\dagger=\left(
                       \begin{array}{cc}
                         A^{T} & -(A^{T})^2\alpha \\
                         0 & 1 \\
                       \end{array}
                     \right)$.  It is straightforward to deduce that $G$ is the direct product of $G_1$ and $G_2$, i.e. $G=G_1\star G_2$, where $G_1=\{Y|Y=\left(
                                                                                                          \begin{array}{cc}
                                                                                                            C & 0 \\
                                                                                                            0 & 1 \\
                                                                                                          \end{array}
                                                                                                        \right), C\in \SO(n)\}$
and $G_2=\{Z|Z=\left(
                 \begin{array}{cc}
                   I & \beta \\
                   0 & 1 \\
                 \end{array}
               \right), \beta\in\R^{n}\}$.
                     By Theorem \ref{groupact}, the action $\Theta$ of the descendent Lie group $(G, \star)$ on the manifold $G$ is given by
\begin{eqnarray*}
&&\Theta(\left(
        \begin{array}{cc}
          A & \alpha \\
          0 & 1 \\
        \end{array}
      \right)
)\left(
         \begin{array}{cc}
           C & \beta \\
           0 & 1 \\
         \end{array}
       \right)\\
&=&\Big(\huaB(\left(
                \begin{array}{cc}
                  C & \beta \\
                  0 & 1 \\
                \end{array}
              \right)\left(
                       \begin{array}{cc}
                         A^{T} & -(A^{T})^2\alpha \\
                         0 & 1 \\
                       \end{array}
                     \right)\left(
                \begin{array}{cc}
                  C^{T} & -C^{T}\beta \\
                  0 & 1 \\
                \end{array}
              \right))
\Big)^{-1}\left(
            \begin{array}{cc}
              C & \beta \\
              0 & 1 \\
            \end{array}
          \right)\huaB(\left(
                       \begin{array}{cc}
                         A^{T} & -(A^{T})^2\alpha \\
                         0 & 1 \\
                       \end{array}
                     \right))\\
&=&\left(
     \begin{array}{cc}
       C & CAC^{T}\beta \\
       0 & 1 \\
     \end{array}
   \right),
\end{eqnarray*}
and the induced representation, which we use the same notation $\Theta: G\lon GL(\g)$ is given by
$$\Theta(\left(
              \begin{array}{cc}
                A & \alpha \\
                0 & 1 \\
              \end{array}
            \right)
)\left(
    \begin{array}{cc}
      x & u \\
      0 & 0 \\
    \end{array}
  \right)=\frac{d}{dt}\bigg|_{t=0}\Theta(\left(
              \begin{array}{cc}
                A & \alpha \\
                0 & 1 \\
              \end{array}
            \right))\left(
    \begin{array}{cc}
      \exp(tx) & tu \\
      0 & 1 \\
    \end{array}
  \right)=\left(
             \begin{array}{cc}
               x & Au \\
               0 & 0 \\
             \end{array}
           \right),
  $$
where  the Lie algebra $\g$ of the Euclidean Lie group $G$ is given by $$\g=\{\left(
                                                                        \begin{array}{cc}
                                                                          x & u \\
                                                                          0 & 0 \\
                                                                        \end{array}
                                                                      \right)|x\in\so(n), u\in\R^{n}\}.$$
Then the differentiation $\Diff(\huaB) $ is given by
 $$\Diff(\huaB) (\left(                                                   \begin{array}{cc}
                                                                                                     x & u \\
                                                                                                     0 & 0 \\
                                                                                                   \end{array}
                                                                                                 \right)
                                                                      )
                                                                      )=\frac{d}{dt}\bigg|_{t=0}\huaB(\left(
                                                                                             \begin{array}{cc}
                                                                                               \exp(tx) & tu \\
                                                                                               0 & 1 \\
                                                                                             \end{array}
                                                                                           \right))=\left(
                                                                                                      \begin{array}{cc}
                                                                                                        0 & -u \\
                                                                                                        0 & 0 \\
                                                                                                      \end{array}
                                                                                                    \right),
                                                                      $$ for all $\left(
                                                                                         \begin{array}{cc}
                                                                                           x & u \\
                                                                                           0 & 0 \\
                                                                                         \end{array}
                                                                                       \right)
                                                                      \in\g$.
By Proposition \ref{pro:descendent-diff}, the descendent Lie algebra  $(\g, [\cdot,\cdot]_{\Diff(\huaB)})$ is given by
\begin{eqnarray*}
&&[\left(
   \begin{array}{cc}
     x & u \\
     0 & 0 \\
   \end{array}
 \right), \left(
            \begin{array}{cc}
              y & v \\
              0 & 0 \\
            \end{array}
          \right)]_{\Diff(\huaB)}\\
 &=&\left(
      \begin{array}{cc}
        x & u \\
        0 & 0 \\
      \end{array}
    \right)\left(
             \begin{array}{cc}
               y & v \\
               0 & 0 \\
             \end{array}
           \right)-\left(
      \begin{array}{cc}
        y & v \\
        0 & 0 \\
      \end{array}
    \right)\left(
             \begin{array}{cc}
               x & u \\
               0 & 0 \\
             \end{array}
           \right)\\
           &&+[\Diff(\huaB) (\left(
             \begin{array}{cc}
               x & u \\
               0 & 0 \\
             \end{array}
           \right)),\left(
      \begin{array}{cc}
        y & v \\
        0 & 0 \\
      \end{array}
    \right)]-[\Diff(\huaB) (\left(
             \begin{array}{cc}
               y & v \\
               0 & 0 \\
             \end{array}
           \right)),\left(
      \begin{array}{cc}
        x & u \\
        0 & 0 \\
      \end{array}
    \right)]\\
 &=&\left(
      \begin{array}{cc}
        xy-yx & xv-yu \\
        0 & 0 \\
      \end{array}
    \right)+\left(
      \begin{array}{cc}
        0 & yu-xv \\
        0 & 0 \\
      \end{array}
    \right)\\
 &=&\left(
      \begin{array}{cc}
        xy-yx & 0\\
        0 & 0 \\
      \end{array}
    \right).
\end{eqnarray*}
Therefore, the descendent Lie algebra $(\g ,[\cdot,\cdot]_{\Diff(\huaB)})$ is the direct sum of the Lie algebra   $\so(n)$ and the abelian Lie algebra $\R^{n}$, and obviously $\so(n)$ is an ideal. By Proposition \ref{pro:rep-diff}, the representation
  $\theta: \g\lon \gl(\g)$ of the descendent Lie algebra $(\g ,[\cdot,\cdot]_{\Diff(\huaB)})$ on $\g$ is given by
$$
\theta(\left(
       \begin{array}{cc}
         x & u \\
         0 & 0 \\
       \end{array}
     \right)
)\left(
   \begin{array}{cc}
     y & v \\
     0 & 0 \\
   \end{array}
 \right)=\frac{d}{dt}\bigg|_{t=0}\Theta(\left(
                                     \begin{array}{cc}
                                       \Exp(tx) & tu \\
                                       0 & 1 \\
                                     \end{array}
                                   \right))\left(
                                             \begin{array}{cc}
                                               y & v \\
                                               0 & 0 \\
                                             \end{array}
                                           \right)
                                   =\left(
                                      \begin{array}{cc}
                                        0 & xv \\
                                        0 & 0 \\
                                      \end{array}
                                    \right).
$$

Denote by $\g^{\so(n)}=\{\xi\in\g|\theta(\left(
                                          \begin{array}{cc}
                                            x & 0 \\
                                            0 & 0 \\
                                          \end{array}
                                        \right)
)\xi=0, \forall x\in\so(n)\}.
$
Then it is obvious that $\g^{\so(n)}=\{\left(
                               \begin{array}{cc}
                                 \so(n) & 0 \\
                                 0 & 0 \\
                               \end{array}
                             \right)\}.$
With above preparations, using the exact sequences of low degree terms in the Hochschild-Serre spectral sequence, we obtain
\begin{equation*}
0\lon H^{1}(\g/{\so(n)}, \g^{{\so(n)}})\lon H^{1}(\g,\g) \lon H^{1}({\so(n)},\g)^{\g/{\so(n)}}\lon H^{2}(\g/{\so(n)}, \g^{{\so(n)}})\lon H^{2}(\g,\g).
\end{equation*}
Since $\so(n)$ is a semisimple Lie algebra, we have $H^{1}({\so(n)},\g)^{\g/{\so(n)}}=0$, thus $H^{1}(\g/{\so(n)},\g^{{\so(n)}})\cong H^{1}(\g,\g)$. Since $\g/{\so(n)}=\mathbb R^n$ is abelian and the representation is trivial, it follows that $$H^{1}(\g/{\so(n)},\g^{{\so(n)}})=\Hom(\mathbb R^n,{\so(n)})\cong\R^{\frac{n^{2}(n-1)}{2}}.$$ Therefore, we have $$H^{2}(\Diff(\huaB))=H^{1}(\g,\g)\cong\R^{\frac{n^{2}(n-1)}{2}}.$$
\emptycomment{Moreover, for all $f\in\Hom(\g/\h,\g^{\h})$, we have
\begin{eqnarray*}
&&\dM_{\CE}(f)(\left(
               \begin{array}{cc}
                 0 & u \\
                 0 & 0 \\
               \end{array}
             \right),\left(
                      \begin{array}{cc}
                        0 & v \\
                        0 & 0 \\
                      \end{array}
                    \right))\\
&=&\theta(\left(
               \begin{array}{cc}
                 0 & u \\
                 0 & 0 \\
               \end{array}
             \right))\left(
                      \begin{array}{cc}
                        0 & v \\
                        0 & 0 \\
                      \end{array}
                    \right)-\theta(\left(
                      \begin{array}{cc}
                        0 & v \\
                        0 & 0 \\
                      \end{array}
                    \right))\left(
               \begin{array}{cc}
                 0 & u \\
                 0 & 0 \\
               \end{array}
             \right)-f([\left(
                          \begin{array}{cc}
                            0 & u \\
                            0 & 0 \\
                          \end{array}
                        \right)
             ,\left(
                \begin{array}{cc}
                  0 & v \\
                  0 & 0 \\
                \end{array}
              \right)
             ]_{\Diff(\huaB)})\\
             &=&0.
\end{eqnarray*}
For all $\left(
           \begin{array}{cc}
             x & 0 \\
             0 & 0 \\
           \end{array}
         \right)\in\g^{\h}
$, we have
\begin{equation*}
\dM_{\CE}(\left(
            \begin{array}{cc}
              x & 0 \\
              0 & 0 \\
            \end{array}
          \right)
)\left(
   \begin{array}{cc}
     0 & u \\
     0 & 0 \\
   \end{array}
 \right)=\theta(\left(
                  \begin{array}{cc}
                    0 & u \\
                    0 & 0 \\
                  \end{array}
                \right)
 )\left(
            \begin{array}{cc}
              x & 0 \\
              0 & 0 \\
            \end{array}
          \right)=0.
\end{equation*}
}  }
\end{ex}

\section{Local integration of relative Rota-Baxter operators on Lie algebras}\label{sec:loi}

In this section, we introduce the notion of a local relative Rota-Baxter operator on a Lie group and show that any relative Rota-Baxter operator on a Lie algebra can be integrated to a   local relative Rota-Baxter operator on the Lie group integrating the Lie algebra $\g.$

\emptycomment{1. integrate $B:(\h, [-,-]_B) \to \g$ to $\huaB: \tilde{H} \to G$,
where both $\tilde{H}$ and $G$ are connected and simply connected.

2. prove that $h_1 \cdot_{\tilde{H}} h_2 = h_1 \cdot_H
\Phi(\huaB(h_1))h_2$ holds locally. Sketch of proof: $Lie(\huaH)=(\h, [-,-]_B)\cong
Graph_1(B) \subset \g\rtimes_\phi \h $, and $\huaH \cong
Graph(\huaB) (\subset G\times \tilde{H})$
whose Lie algebra $Graph_2(B) \subset \g \times (\h, [-, -]_B)$ is
isomorphic to $Graph_1(B) \subset \g\rtimes_\phi \h$. Let us call $E$ the
sub Lie group corresponding to $Graph_1(B)$.
Then $E$ is locally isomorphic to $Graph(\huaB)$ as manifold near the
identity. Moreover $U(E)$ and $U(\tilde{H})$ are isomorphic as local Lie
groups.

3. prove that $\huaB(h):=\exp^G(B(\log^{\tilde{H}} h))$ provides a
local integration based on observation in 2. Here $\log$ is the
inverse function of $\exp$ near identity.

4. prove $\Hom(l.int(B_1), \huaB_2) = \Hom(B_1, diff(\huaB_2))$. Here
$l.int$ is the functor from R.B.O. on LieAlg to that on local LieGp,
and $diff$ is the functor goes the other way around.
}

\begin{defi}
Let $\Phi: G\lon \Aut(H)$ be an action of $G$ on $H$. If there exists an open neighborhood $U$ of $e_H$ in $H$ and a smooth map $\huaB: U\lon G$ such that
\begin{equation}\label{eq:loc-RB}
\huaB(h_1)\cdot_{G}\huaB(h_2)=\huaB(h_1\cdot_H\Phi(\huaB(h_1))h_2), \quad h_1, h_2\in U,
\end{equation}
whenever the element $h_1\cdot_H\Phi(\huaB(h_1))h_2$ is also in $U$,
then $\huaB$ is called a {\bf local relative Rota-Baxter operator} on $G$
with respect to the action $(H;\Phi)$.

In particular, if $H=G$ and the action is the adjoint action of $G$ on itself, then we call $\huaB$ a {\bf local Rota-Baxter operator}.

Two local relative Rota-Baxter operators $\huaB: U\lon G$ and $\huaB':
U'\lon G$ are defined to be equal if there exists an open neighborhood $\bar{U}\subset U\cap U'$ of $e_H$, such that $B|_{\bar{U}}=B'|_{\bar{U}}.$
\end{defi}

\begin{defi}
 Let  $\huaB: U\lon G$ and $\huaB': U'\lon G$ be two local relative Rota-Baxter operators on $G$ with respect to an action $(H;\Phi)$.  A {\bf homomorphism} from $\huaB'$ to $\huaB$ consists of  Lie group homomorphisms $\Psi_G: G\lon G$ and   $\Psi_H: H\lon H$ such that \eqref{hom-rbo-gp1} holds in an open neighborhood $\bar{U}\subset U\cap U'$ of $e_H$ and \eqref{hom-rbo-gp2} holds.
%\begin{eqnarray}
%\label{hom-rbo-gp1}\huaB\circ\Psi_H&=&\Psi_G\circ\huaB',\\
%\label{hom-rbo-gp2}\Psi_H(\Phi(g)h)&=&\Phi(\Psi_G(g))\Psi_H(h), \quad \forall g\in G, h\in H.
%\end{eqnarray}
\end{defi}

\subsection{Local integration functor and adjointness}

It is clear that local relative Rota-Baxter operators on a Lie group
$G$ with respect to an action $(H;\Phi)$ together with
homomorphisms between them also form a category, which we denote by
$\lRBGH$. %It is clear, that we may restrict a relative Rota-Baxter
          %operator to a local one, and this extend to a functor $r:
          %\RBGH \to \lRBGH$. --we will use it later in extension, the
          %adjoint of $r$.
Since the differentiation functor established in the previous section
depends only on the local information near the identity element, it induces a
differentiation functor,
\begin{equation}\label{eq:diff-functor}\Diff: \lRBGH
\to \RBgh, \end{equation} which we use the same letter by abusing the
notations.

\emptycomment{
Similar as the discussion in previous sections, given a local relative Rota-Baxter operator $\huaB$, taking the differentiation, we can obtain a relative Rota-Baxter operator $$B=\Diff(\huaB):=\huaB_*:\h\lon\g.$$ Moreover, given a homomorphism  $(\Psi_G,\Psi_H)$, taking the differentiation, we can obtain a homomorphism $(\psi_G,\psi_H)$. Consequently, there is a functor
$$
\Diff:\RBGH\to \RBgh.
$$ }

Let  $B:\h\lon\g$ be a relative Rota-Baxter operator on
$\g$ with respect to an action $(\h;\phi)$. Let $G$ and $H$ be
  Lie
groups (not necessarily connected and simply connected) integrating $\g$ and $\h$ respectively. If there is an action
$\Phi: G\to \Aut(H)$ whose differentiation is $\phi$ (see \eqref{eq:relation}), and a local relative
Rota-Baxter operator $\huaB$ on $G$ with respect to $\Phi$ such that $\Diff(\huaB)=B$, then we call that $\huaB$
{\bf integrates} $B$. %In the sequel, we show that  a  relative Rota-Baxter
%operator  on a Lie algebra $\g$ can be integrated to  a local
%relative Rota-Baxter operators on the Lie group $G$.

%$B:\h\lon\g$ can be integrated to a local relative Rota-Baxter operator $\huaB:H\lon G$, where $G$ and $H$ are connected, simply connected Lie groups integrating the Lie algebras $\g$ and $\h$ respectively.
\emptycomment{
Let $\phi: \g\lon \Der(\h)$ be a Lie algebra homomorphism. Let  $G$
and $H$ be  connected, simply connected Lie groups integrating Lie
algebras $\g$ and $\h$ respectively.  First there is a unique Lie
group homomorphism $\Phi: G\lon \Aut(\h)$ such that
$\Phi_{*}=\phi$. Moreover, since $H$ is a connected, simply connected
Lie group, it follows that $\Aut(\h)\cong\Aut(H)$. Therefore  we have
a Lie group homomorphism from $G$ to $\Aut(H)$, which is also denoted
by $\Phi: G\lon\Aut(H)$. The relation between $\Phi$ and $\phi$ is
illustrated by Diagram \eqref{eq:relation}. }

\begin{thm} \label{thm:obj}
Let $B: \h\lon \g$ be a relative Rota-Baxter
operator on a Lie algebra $\g$ with respect to an action $(\h;\phi)$. Let $G$ and $H$ be the connected and simply connected Lie
groups integrating $\g$ and $\h$ respectively, and $\Phi: G\to \Aut(H)$ be the
integrated action (see the paragraph above \eqref{eq:relation}).
Then there is a unique local relative Rota-Baxter operator $\huaB$ on
$G$ with respect to the action $(H;\Phi)$
integrating $B$.
\end{thm}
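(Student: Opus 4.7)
The plan is to realize $\huaB$ as a local section of the projection to $H$ from the connected Lie subgroup of $G \ltimes_\Phi H$ integrating the Lie subalgebra $Gr(B)$. By Proposition \ref{graphro}, $Gr(B) = \{B(u) + u \mid u \in \h\}$ is a Lie subalgebra of $\g \ltimes_\phi \h$. Since $G \ltimes_\Phi H$ is a Lie group integrating $\g \ltimes_\phi \h$, the Lie subalgebra--subgroup correspondence produces a unique connected immersed Lie subgroup $E \subset G \ltimes_\Phi H$ with $\Lie(E) = Gr(B)$.

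Consider the projection $p_H : G \ltimes_\Phi H \to H$. Its restriction to $E$ is a Lie group homomorphism whose differential at the identity is the isomorphism $Gr(B) \to \h$, $B(u) + u \mapsto u$. Hence there exist open neighborhoods $V$ of the identity in $E$ and $U$ of $e_H$ in $H$ such that $p_H|_V : V \to U$ is a diffeomorphism. I would then define $\huaB : U \to G$ by $\huaB(h) = p_G \circ (p_H|_V)^{-1}(h)$, so that $(p_H|_V)^{-1}(h) = (\huaB(h), h)$. The identity $\Diff(\huaB) = B$ is then immediate: the tangent space of the graph of $\huaB$ at $e_H$ must equal $Gr(B)$, forcing $\huaB_{\ast e_H}(u) = B(u)$.

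To verify the local Rota-Baxter equation \eqref{eq:loc-RB}, after possibly shrinking $V$ and $U$, take $h_1, h_2 \in U$ with $h_1 \cdot_H \Phi(\huaB(h_1)) h_2 \in U$. Then
\begin{equation*}
(\huaB(h_1), h_1) \cdot_\Phi (\huaB(h_2), h_2) = \bigl(\huaB(h_1) \cdot_G \huaB(h_2),\; h_1 \cdot_H \Phi(\huaB(h_1)) h_2\bigr)
\end{equation*}
lies in $E$, since both factors belong to $V \subset E$ and $E$ is a subgroup. Its $H$-projection is $h_1 \cdot_H \Phi(\huaB(h_1)) h_2 \in U$, so by injectivity of $p_H|_V$ this element must agree with $(\huaB(h_1 \cdot_H \Phi(\huaB(h_1)) h_2),\; h_1 \cdot_H \Phi(\huaB(h_1)) h_2)$; comparing first coordinates yields \eqref{eq:loc-RB}. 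Uniqueness then follows because any other local relative Rota-Baxter operator $\huaB'$ integrating $B$ has graph locally tangent to $Gr(B)$ at identity and closed under the multiplication of $G \ltimes_\Phi H$; hence its graph coincides with $V$ near the identity by the uniqueness of $E$, so $\huaB'$ agrees with $\huaB$ on a neighborhood of $e_H$.

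The main technical obstacle lies in the careful management of the nested neighborhoods: one must choose $V$ small enough that the product of two of its elements remains within a region where $p_H|_V^{-1}$ makes sense, while simultaneously ensuring that the compatibility condition $h_1 \cdot_H \Phi(\huaB(h_1)) h_2 \in U$ entails that this product lies in $V$ (so injectivity of $p_H|_V$ applies). A standard continuity-and-shrinking argument for the group operations of $G \ltimes_\Phi H$ should handle this, since $p_H|_V$ is a diffeomorphism onto an open set and the semi-direct product multiplication is smooth.
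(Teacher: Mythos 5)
Your construction is essentially the paper's own: both integrate $Gr(B)$ to a connected Lie subgroup $E$ of $G\ltimes_\Phi H$, invert the projection $P_H|_E$ near the identity to realize the graph of $\huaB$, and read off the local Rota--Baxter identity from closure of $E$ under $\cdot_\Phi$ (the paper, like you, leaves implicit the neighborhood-shrinking needed to ensure the product lands back in the set where $P_H$ is injective, which you at least flag explicitly). The only divergence is in the uniqueness step, where the paper pins $\huaB$ down via the explicit formula $\huaB(P_H(\EXP(B(u),u)))=\exp_G B(u)$ rather than your appeal to local uniqueness of the integral subgroup with Lie algebra $Gr(B)$; both arguments are valid.
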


\begin{proof}
We consider the semi-direct product Lie group $G\ltimes_\Phi H$, with
multiplication $\cdot_\Phi$ given by
\begin{equation*}
(g_1,h_1)\cdot_{\Phi}(g_2,h_2)=(g_1\cdot_Gg_2, h_1\cdot_H\Phi(g_1)h_2), \quad \forall g_i\in G, h_i\in H, i=1,2.
\end{equation*}
Its Lie algebra is the semi-direct product Lie algebra $\g\ltimes_\phi
\h$, and by Proposition \ref{graphro},   $Gr(B)$  is a Lie subalgebra
of $\g\ltimes_\phi \h$. Thus there exists a connected Lie subgroup $E$ of
$G\ltimes_\Phi H$ such that its Lie algebra is $Gr(B)$.

%the following proof has an unclear point: the $\iota$ may have two
%expression, see the inner part of the proof:
\emptycomment{Let
$\tilde{H}$ be the connected and simply connected Lie group
integrating the descendent Lie algebra $(\h, [\cdot,\cdot]_B)$, and
$\tilde{\huaB}: \tH\to G$ the Lie group homomorphism integrating the Lie
algebra homomorphism $B: (\h,
[\cdot,\cdot]_B)\to \g$.  Since all $H$, $\tH$ and $E$ integrate
Lie algebras with the same underlying vector spaces, they are all
locally isomorphic as manifolds near the identity. That is, there are
open neighborhoods $U(H)$, $U(\tH)$, and $U(E)$ near the identities in
$H$, $\tH$ and $E$ respectively, with isomorphisms \[U(H) \xrightarrow{\iota}
U(\tH) \xrightarrow{c} U(E).\] Here the tangent of $\iota$ and $c$ at
identity elements are both the identity morphism of $\h$ as a vector
space. In fact, $\iota \exp_H=\exp_{\tH}$, and $c$ is the restriction of the
covering map $\tH \to E$, which is further a Lie group morphism.
More explicitly, $c(\tilde{h})=(\tilde{\huaB}(\tilde{h}),
\iota^{-1}(\tilde{h}))$,
%here this $\iota$ is like what the new proof is given, namely it's
%Pr_H i_E \circ (Pr_2 i_E)^{-1}, where Pr_2: G\ltimes H \to \tilde{H}
%integrating pr_2: g\ltimes h \to h, [-,-]_B. But it might not be the
%above \iota, in fact, what we prove in the explicit formula shows
%that \iota preserves \exp_H and \exp_{\tilde{H}} up to a term in \ker
%\huaB, after applying \huaB, we don't see the difference.
because
its differentiation $Lie(c): (\h, [\cdot,\cdot]_B) \to Gr(B)$ takes the form
$a \mapsto (B(a), a)$. Therefore, for $h_1,  h_2 \in U(H)$, if
$\iota(h_1)\cdot_{\tH} \iota(h_2)$ is still in $U(\tH)$, then
\begin{equation}\label{eq:tH-mul}
\iota(h_1) \cdot_{\tH} \iota( h_2) = \iota ( h_1 \cdot_H \Phi(\tilde{\huaB}(\iota(h_1))h_2).
\end{equation} Here, to be clear, we denote the multiplication in $H$ and $\tH$ by
$\cdot_H$ and $\cdot_{\tH}$ respectively.

We take  $\huaB:= \tilde{\huaB}\circ \iota: U(H) \to G$. Then since $\tilde{\huaB}$
preserves the multiplication, together with \eqref{eq:tH-mul}, we have
\begin{eqnarray*}
\huaB(h_1) \cdot_G \huaB(h_2)&=& \tilde{\huaB}(\iota(h_1) \cdot_G \tilde{\huaB}(
\iota(h_2))\\
&=& \tilde{\huaB}(\iota(h_1) \cdot_{\tH} \iota(h_2))\\
& =&
\tilde{\huaB}( \iota ( h_1 \cdot_H \Phi(\tilde{\huaB}(\iota(h_1))h_2))\\
&=&\huaB (h_1 \cdot_H \Phi(\huaB(h_1)h_2).
\end{eqnarray*} That is, $\huaB$ is a relative Rota-Baxter operator. As the tangent
map of $\iota$ at $e_H$ is $id: \h\to \h$ as vector spaces, and
$\huaB$ is the integration of $B$,  it is
clear $\Diff (\huaB)=B$.

Since $\tilde{\huaB}$ is a Lie group homomorphism, we have
\begin{equation} \label{eq:exp}
\huaB \circ \exp_H = \tilde{\huaB} \circ \iota \circ \exp_{\tH} = \tilde{\huaB}
\circ \exp_{\tH} = \exp_G \circ B.
\end{equation}
This gives us an explicit formula for $\huaB$, and since $\exp_H$ is
a local isomorphism in the neighborhood of the identity, the uniqueness
of $\huaB$ naturally follows.}

%now a new version of proof

Define a smooth map $P_H: E\lon H$ by
$
P_H(g,h)=h $ for all $(g,h)\in E.
$
Then its tangent map at the identity ${P_H}_*: Gr(B)\lon\h$ is given by
\begin{equation*}
{P_H}_{*}(B(u),u)=u,\quad \forall u\in\h,
\end{equation*}
which is an isomorphism from the vector space $Gr(B)$ to the vector
space $\h$.  Thus there exists an open set $V\subseteq E$ such that
$P_H|_{V}$ is an isomorphism of manifolds, which implies that there is
an open set $U$ of $H$ which contains $e_H$ and a smooth map $\huaB: U\lon G$, such that $V\cong Gr(\huaB)$.

For all $h_1,h_2\in U$, we have
\begin{equation*}
(\huaB(h_1),h_1)\cdot_\Phi (\huaB(h_2),h_2)=(\huaB(h_1)\cdot_G\huaB(h_2), h_1\cdot_H\Phi(\huaB(h_1))h_2).
\end{equation*}
Therefore, as soon as $h_1\cdot_H\Phi(\huaB(h_1))h_2\in U$, we have
$$(\huaB(h_1)\cdot_G\huaB(h_2), h_1\cdot_H\Phi(\huaB(h_1))h_2)\in Gr({\huaB}),$$
which implies that
$\huaB(h_1)\cdot_G\huaB(h_2)=\huaB(h_1\cdot_H\Phi(\huaB(h_1))h_2)$. Therefore,
$\huaB:U\lon G$ is a local relative Rota-Baxter
operator. Furthermore, since $Gr(\huaB)\subset E$, and the Lie algebra
of $E$ is $Gr(B)$, it follows that $\huaB_*=B$, and $\huaB$ is an
integration of $B$.

Now we show the uniqueness of $\huaB$ through an explicit
formula. Denote by $\EXP$ the exponential map for the Lie group
$G\ltimes_\Phi H$, and by $P_H$ the projection $G\ltimes_{\Phi}
H\to H$. For all $x\in\g, u\in\h$, it is obvious that
\begin{equation*}
\EXP (x,u)=(\exp_Gx, P_H\EXP(x,u)).
\end{equation*}
Since the Lie algebra of the Lie subgroup $E$  is $Gr(B)$, it follows that locally $\EXP(B(u),u)\in Gr(\huaB)\subseteq E$. Therefore,
\begin{equation} \label{eq:exp}
\huaB(P_H(\EXP (B(u),u)))=\exp_{G}B(u).
\end{equation}
Since $P_H\circ \EXP: Gr(B) \to H$ is
a local isomorphism in the neighborhood of the identity\footnote{The
  map itself is also defined locally therein.}, the uniqueness
of $\huaB$ naturally follows.
\end{proof}

Now we extend the integration to the level of morphisms and establish
a functor $$\Int: \RBgh\to \lRBGH .$$

\begin{thm} \label{thm:int-functor} Let $B$ and $B'$ be relative Rota-Baxter operators on a
  Lie algebra $\g$ with respect to an action $(\h;\phi)$
   and $\psi=(\psi_\g, \psi_\h)$ be a
  homomorphism from $B'$ to $B$. Let $G$ and $H$ be connected and
  simply connected Lie groups integrating $\g$ and $\h$ respectively,
  and let $\huaB': U'\to G$ and $\huaB:
  U\to G$ be the integrated local relative Rota-Baxter operators of $B'$
  and $B$ respectively,  as in
  the previous theorem.
Let $\Psi_G$ and $\Psi_H$ be the Lie group homomorphisms integrating the
Lie algebra homomorphisms $\psi_\g$ and $\psi_\h$ respectively.  Then
$(\Psi_G,\Psi_H)$ is a homomorphism from $\huaB'$ to
$\huaB$. Consequently, we obtain a functor
\begin{equation}\label{eq:int-functor}
\Int:\RBgh\to \lRBGH.
\end{equation}
 %\begin{itemize}
 % \item on the set of objects, the functor $\Int$  is defined  by $B\longmapsto \Int(B)$
 %\item on the set of morphisms, the functor $\Int$ is defined by $(\psi_\g,\psi_\h)\longmapsto (\Psi_H,\Psi_H)$.
% \end{itemize}
\end{thm}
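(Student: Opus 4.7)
The plan is to reduce the verification of the two conditions \eqref{hom-rbo-gp1} and \eqref{hom-rbo-gp2} defining a homomorphism of (local) relative Rota-Baxter operators to Lie-theoretic facts that have already been established, and then observe functoriality.

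First I would dispose of condition \eqref{hom-rbo-gp2}: since $\psi=(\psi_\g,\psi_\h)$ is a homomorphism from $B'$ to $B$ in the category $\RBgh$, identity \eqref{hom-rbo2} holds, and $G$, $H$ are connected; thus Lemma \ref{pro-imporpair} applies directly and yields \eqref{hom-rbo-gp2} for $(\Psi_G,\Psi_H)$. An immediate consequence, as observed after Definition of homomorphism in $\RBGH$, is that $(\Psi_G,\Psi_H)$ is a Lie group endomorphism of the semi-direct product $G\ltimes_\Phi H$ whose differentiation is $(\psi_\g,\psi_\h)$, itself a Lie algebra endomorphism of $\g\ltimes_\phi\h$.

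Next I would establish \eqref{hom-rbo-gp1} locally by a graph argument, following the strategy used in Theorem \ref{thm:obj}. Relation \eqref{hom-rbo1} for $\psi$ means exactly $(\psi_\g,\psi_\h)(Gr(B'))\subseteq Gr(B)$. Let $E'$ and $E$ be the connected Lie subgroups of $G\ltimes_\Phi H$ integrating $Gr(B')$ and $Gr(B)$ respectively, as constructed in the proof of Theorem \ref{thm:obj}. Because $(\Psi_G,\Psi_H)$ is a Lie group homomorphism whose differentiation sends $Gr(B')$ into $Gr(B)$, it sends $E'$ into $E$ (this holds locally near identity by the exponential description, and extends globally since $E'$ is connected and covered by products of exponentials). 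Recall from the proof of Theorem \ref{thm:obj} that the local graphs $Gr(\huaB')$ and $Gr(\huaB)$ sit inside $E'$ and $E$ respectively as open neighborhoods of the identity under the diffeomorphisms induced by the projection $P_H$. Choose an open neighborhood $\bar U\subseteq U'$ of $e_H$ small enough that $\Psi_H(\bar U)\subseteq U$ and that $(\Psi_G,\Psi_H)$ maps the corresponding open piece of $Gr(\huaB')$ into the open piece of $Gr(\huaB)$; such $\bar U$ exists by continuity of $\Psi_H$ and $\Psi_G\circ\huaB'$. Then for $h\in\bar U$, the element $(\huaB'(h),h)\in Gr(\huaB')$ is mapped to $(\Psi_G(\huaB'(h)),\Psi_H(h))\in Gr(\huaB)$, forcing
\begin{equation*}
\Psi_G\circ\huaB'=\huaB\circ\Psi_H \quad \text{on } \bar U,
\end{equation*}
which is \eqref{hom-rbo-gp1} in the local sense required by the definition of a homomorphism in $\lRBGH$.

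Finally, I would verify functoriality of $\Int$. The assignment on objects is Theorem \ref{thm:obj}, and the assignment on morphisms is the rule $\psi\mapsto(\Psi_G,\Psi_H)$ just shown to produce a morphism in $\lRBGH$. Preservation of identities is immediate because the identity Lie algebra endomorphism integrates to the identity Lie group endomorphism. Preservation of composition follows from the uniqueness part of Lie's second theorem: if $\psi_2\circ\psi_1$ is the composition of two homomorphisms of relative Rota-Baxter operators, then the unique Lie group homomorphisms integrating $\psi_\g^2\circ\psi_\g^1$ and $\psi_\h^2\circ\psi_\h^1$ are $\Psi_G^2\circ\Psi_G^1$ and $\Psi_H^2\circ\Psi_H^1$ respectively, proving $\Int(\psi_2\circ\psi_1)=\Int(\psi_2)\circ\Int(\psi_1)$.

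The main obstacle is the bookkeeping of domains in the local setting: one has to check that shrinking $U'$ to $\bar U$ so that $\Psi_H(\bar U)\subseteq U$ is compatible with the equivalence relation on local relative Rota-Baxter operators and with the defining equation \eqref{eq:loc-RB} of $\huaB$ being applicable to elements in the image. Everything else is a direct consequence of Theorem \ref{thm:obj}, Lemma \ref{pro-imporpair}, and standard Lie integration.
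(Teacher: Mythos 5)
Your proposal is correct, and its overall skeleton matches the paper's: condition \eqref{hom-rbo-gp2} is disposed of by Lemma \ref{pro-imporpair}, the local version of \eqref{hom-rbo-gp1} is extracted from the construction of $\huaB$ in Theorem \ref{thm:obj}, and functoriality is routine. The one place where you genuinely diverge is the mechanism for \eqref{hom-rbo-gp1}: the paper computes explicitly, using the formula \eqref{eq:exp} to chase $\Psi_{G}\bigl(\huaB'(P_H(\EXP (B'(u),u)))\bigr)=\exp_{G}(\psi_{\g}B'(u))=\exp_{G}(B(\psi_{\h}(u)))$ back to $\huaB\bigl(\Psi_H(P_H(\EXP (B'(u),u)))\bigr)$, whereas you argue by containment of graphs, showing that $(\Psi_G,\Psi_H)$ carries the subgroup $E'$ integrating $Gr(B')$ into the subgroup $E$ integrating $Gr(B)$ and hence carries a small piece of $Gr(\huaB')$ into the neighborhood $V\cong Gr(\huaB)$ of the identity in $E$; reading off the two components then forces $\Psi_G\circ\huaB'=\huaB\circ\Psi_H$ near $e_H$. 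Your route is precisely the technique the paper deploys later for the isomorphism $\Hom_\RBGH(\huaB',\huaB)\cong\Hom_\RBgh(\Diff(\huaB'),\Diff(\huaB))$, and it buys you independence from the exponential parametrization; the paper's computation instead makes visible how the explicit formula \eqref{eq:exp} determines $\huaB$ uniquely. The only step you should spell out is why a point of $E$ near the identity with $H$-component $\Psi_H(h)$ must equal $(\huaB(\Psi_H(h)),\Psi_H(h))$: this follows from the injectivity of $P_H|_{V}$ established in the proof of Theorem \ref{thm:obj}, which guarantees the first component is a function of the second on $V$. With that remark added, your argument is complete.
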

\begin{proof}
To prove that $(\Psi_G,\Psi_H)$ is a
 homomorphism from $\huaB'$ to $\huaB$, we need to show
 Eq. \eqref{hom-rbo-gp2}, and a local version of Eq. \eqref{hom-rbo-gp1}. Eq. \eqref{hom-rbo-gp2} does not
 involve relative Rota-Baxter operators and follows directly from
 Lemma \ref{pro-imporpair}.

Now to show Eq. \eqref{hom-rbo-gp1} holds locally, we need to show
that
 $\huaB\circ\Psi_H=\Psi_G\circ\huaB'$ holds in an open neighborhood
 $\bar{U}\subset U\cap U'$ of $e_H$. This follows from the explicit
 formula \eqref{eq:exp},  the corresponding infinitesimal condition
 \eqref{hom-rbo1}, and the fact that $(\Psi_G, \Psi_H): G\ltimes_\Phi
 H \to G\ltimes_\Phi H$ is the integrated homomorphism of the Lie algebra homomorphism $(\psi_\g,
 \psi_\h): \g\ltimes_\phi \h \to \g \ltimes_\phi \h$ (i.e. Lemma \ref{pro-imporpair}). More precisely, we have
  \begin{eqnarray*}
   \Psi_{G}\Big(\huaB'(P_H(\EXP (B'(u),u)))\Big)&=&\Psi_{G}(\exp_{G}B'(u))
=\exp_{G}(\psi_{\g}B'(u))
= \exp_{G}(B(\psi_{\h}(u)))\\
&=&\huaB(P_H(\EXP (B(\psi_{\h}(u)), \psi_{\h}(u))))=\huaB(P_H(\EXP (\psi_{\g}(B'(u)), \psi_{\h}(u))))
\\&=&\huaB(P_H((\Psi_G,\Psi_H)\EXP (B'(u),u)))=\huaB\Psi_H(P_H(\EXP (B'(u),u))).
\end{eqnarray*}
Thus we obtain
\begin{equation*}
\Psi_G\circ \huaB'=\huaB\circ \Psi_H,
\end{equation*}
which implies that $(\Psi_G,\Psi_H)$ is a homomorphism from $\huaB'$ to $\huaB$.  Then it is straightforward to see that $\Int$ is a functor.
\end{proof}

\begin{thm}\label{thm:adj}
The integration functor $\Int$ in \eqref{eq:int-functor} and the
differentiation functor $\Diff$ in \eqref{eq:diff-functor} are adjoint
functors. More precisely,   $\Int$ is left adjoint to $\Diff$ and
$\Diff$ is right adjoint to $\Int$.
\end{thm}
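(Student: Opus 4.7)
The plan is to exhibit, for each object $B\in\RBgh$ and each object $\huaB\in\lRBGH$ (where in both cases $G$ and $H$ denote the connected, simply connected Lie groups integrating $\g$ and $\h$), a bijection
\[
\Phi_{B,\huaB}\colon \Hom_{\lRBGH}\bigl(\Int(B),\huaB\bigr)\;\stackrel{\sim}{\longrightarrow}\;\Hom_{\RBgh}\bigl(B,\Diff(\huaB)\bigr),
\]
natural in both variables, and thereby establish the desired adjunction.

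\textbf{Construction of $\Phi_{B,\huaB}$.} Given a homomorphism $(\Psi_G,\Psi_H)\colon \Int(B)\to \huaB$ in $\lRBGH$, I set $\Phi_{B,\huaB}(\Psi_G,\Psi_H):=\bigl((\Psi_G)_{*e_G},(\Psi_H)_{*e_H}\bigr)$. Because of Theorem \ref{thm:obj} we have $\Diff(\Int(B))=B$, and Proposition \ref{pro:diff-functor} (extended verbatim to the local setting, since the differentiation functor depends only on the germs near $e_H$, cf.\ the comment preceding \eqref{eq:diff-functor}) guarantees that the pair is a morphism $B\to \Diff(\huaB)$ in $\RBgh$.

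\textbf{Construction of the inverse.} Conversely, given $(\psi_\g,\psi_\h)\colon B\to \Diff(\huaB)$ in $\RBgh$, since $G$ and $H$ are connected and simply connected, Lie's second theorem produces unique Lie group homomorphisms $\Psi_G\colon G\to G$ and $\Psi_H\colon H\to H$ integrating $\psi_\g$ and $\psi_\h$. Set $\Psi_{B,\huaB}(\psi_\g,\psi_\h):=(\Psi_G,\Psi_H)$. The verification that this pair is a morphism $\Int(B)\to \huaB$ in $\lRBGH$ splits into two steps: equation \eqref{hom-rbo-gp2} follows directly from Lemma \ref{pro-imporpair}; the local form of equation \eqref{hom-rbo-gp1}, namely $\Psi_G\circ\Int(B)=\huaB\circ\Psi_H$ on some neighbourhood $\bar U\subset U$ of $e_H$, follows from the explicit formula \eqref{eq:exp} in exactly the manner demonstrated at the end of the proof of Theorem \ref{thm:int-functor}:
\begin{eqnarray*}
\Psi_G\bigl(\Int(B)(P_H(\EXP(B(u),u)))\bigr)&=&\Psi_G(\exp_G B(u))=\exp_G(\psi_\g B(u))\\
&=&\exp_G(B'(\psi_\h(u)))=\huaB\bigl(P_H(\EXP(B'(\psi_\h(u)),\psi_\h(u)))\bigr)\\
&=&\huaB\bigl(\Psi_H(P_H(\EXP(B(u),u)))\bigr),
\end{eqnarray*}
where $B':=\Diff(\huaB)$ and I used that $(\Psi_G,\Psi_H)$ integrates the semidirect-product endomorphism $(\psi_\g,\psi_\h)$, so commutes with $\EXP$ and $P_H$.

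\textbf{Mutual inverseness.} That $\Phi_{B,\huaB}\circ\Psi_{B,\huaB}=\mathrm{id}$ is immediate since $(\Psi_G)_{*e_G}=\psi_\g$ and $(\Psi_H)_{*e_H}=\psi_\h$ by construction. In the other direction, given $(\Psi_G,\Psi_H)\colon\Int(B)\to \huaB$, both $(\Psi_G,\Psi_H)$ and $\Psi_{B,\huaB}\bigl(\Phi_{B,\huaB}(\Psi_G,\Psi_H)\bigr)$ are Lie group homomorphisms integrating the same pair of Lie algebra homomorphisms, hence agree by uniqueness in Lie's second theorem. Naturality in $B$ and $\huaB$ is then a formal consequence of the functoriality of $\Int$ and $\Diff$ together with the chain rule.

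\textbf{Main obstacle.} The one genuinely non-formal ingredient is checking that $\Psi_{B,\huaB}(\psi_\g,\psi_\h)$ actually respects the local Rota-Baxter structures, i.e.\ the identity \eqref{hom-rbo-gp1} on a common neighbourhood of $e_H$. Everything rests on the explicit formula $\Int(B)\circ (P_H\circ\EXP)|_{Gr(B)}=\exp_G\circ\,B$ provided by \eqref{eq:exp}; once this is in hand, the compatibility reduces to purely infinitesimal data. The remaining arguments are formal consequences of Lie's second theorem and the functoriality already established in Theorem \ref{thm:int-functor}.
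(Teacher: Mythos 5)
Your proposal is correct and follows essentially the same route as the paper: the adjunction isomorphism is differentiation of morphisms (Proposition \ref{pro:diff-functor}), its inverse is the integration of morphisms from Theorem \ref{thm:int-functor} (using the uniqueness in Theorem \ref{thm:obj} and formula \eqref{eq:exp} to land in $\huaB$ rather than merely in $\Int(\Diff(\huaB))$), and naturality reduces to functoriality of $\Diff$ together with $\Diff\circ\Int=\mathrm{id}$. You spell out the mutual inverseness via Lie's second theorem in more detail than the paper, while the paper is more explicit about the naturality diagram; the substance is the same.
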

\begin{proof}
We need to show that, there is an isomorphism $\alpha_{B'
  \huaB}$, such that
 \begin{equation}\label{eq:alpha}
   \alpha_{B'\huaB}: \Hom_{\lRBGH}(\Int(B'),\huaB)
   \xrightarrow{\cong}\Hom_{\RBgh}(B',\Diff(\huaB)), \quad \forall B'\in
   \RBgh,  \huaB \in \lRBGH,
\end{equation} and $\alpha$ is bi-natural in $B'$ and in $\huaB$, that
is,  when fixing $\huaB$, $\alpha_{-\huaB}$ is a natural isomorphism
between functors $\Hom_{\lRBGH}(\Int(-),\huaB)$ and
$\Hom_{\RBgh}(-,\Diff(\huaB))$; and when fixing $B'$,
$\alpha_{B'-}$ is a natural isomorphism
$\Hom_{\lRBGH}(\Int(B'), -)
   \xrightarrow{\cong}\Hom_{\RBgh}(B',\Diff(-)) $.

We see that by the definition of $\Int$ in Theorem \ref{thm:int-functor},
$G$ and $H$ are required to be connected and simply connected. Given a
morphism $\Psi :=(\Psi_G,
\Psi_H)  \in \Hom_{\lRBGH}(\Int(B'),\huaB)$, it is not hard to
verify that $\alpha_{B'\huaB}: \Psi\mapsto \Diff(\Psi)$,  as
constructed in Proposition \ref{pro:diff-functor},
is an isomorphism with the inverse constructed in Theorem
\ref{thm:int-functor}. The bi-naturality of $\alpha$ follows
from the fact that $\alpha_{B'\huaB}$ preserves the composition, which in
turn follows from the functoriality of $\Diff$. We demonstrate in the
following the naturality in $B'$ and leave the other to interested readers.

Let $f': B'_1\to B'_2$ be a morphism in $\RBgh$. The naturality of
$\alpha_{-\huaB}$ follows from
the commutativity of the diagram,
\begin{equation}\label{diag:alpha-huaB}
\xymatrix{
\Hom_{\lRBGH}(\Int(B'_1),\huaB)
   \ar[r]^{\alpha_{B'_1\huaB}} &\Hom_{\RBgh}(B'_1,\Diff(\huaB)) \\
\Hom_{\lRBGH}(\Int(B'_2),\huaB)
   \ar[r]^{\alpha_{B'_2\huaB}}
   \ar[u]^{\Int(f')^*} & \Hom_{\RBgh} (B'_2,\Diff(\huaB)) \ar[u]_{f'^*},
}
\end{equation} where $\;^*$ denotes the
precomposition (or pullback).  In fact, for all $\Psi\in \Hom_{\lRBGH}(\Int(B'_2),\huaB)$, we have
\begin{equation}
\alpha_{B'_1\huaB} \Int(f')^*(\Psi)=\alpha_{B'_1\huaB}(\Psi\circ \Int(f')) =
\Diff(\Psi\circ \Int(f'))=\Diff(\Psi) \circ f' = f'^*
\alpha_{B'_2\huaB} (\Psi),
\end{equation}
\emptycomment{
Let $f: B''\to B'$ be a morphism in $\RBgh$, then \comment{The meaning of two $f_*$ is not clear}
\begin{equation}
\alpha_{B''\huaB} f_*(\Psi)=\alpha_{B''\huaB}(\Psi\circ \Int(f)) =
\Diff(\Psi\circ \Int(f))=\Diff(\Psi) \circ f = f_* \circ
\alpha_{B'\huaB} (\Psi),
\end{equation}
}
which implies that $\alpha_{-\huaB}$ is natural.
\end{proof}

 \section{Geometric applications} \label{sec:app}
 In this section, we give some geometric applications including the explicit expression of the local factorization of a Lie group given in \cite{STS} and the integration of matched pairs of Lie algebras given by Rota-Baxter operators.

 \subsection{Local descendent Lie group}
As in the global case, a local Rota-Baxter operator also
determines a local descendent Lie group. This turns out to be important to our geometric application. Thus we make this construction here in this subsection.

While the definition of a Lie
group is standard, the definition of a local Lie group varies
slightly. Thus we first fix our definition which is taken mostly from the classical one in \cite{pontryagin}, and however with a slight modification similar to the more recent paper \cite{Olver:96}.

\begin{defi} {\rm(\cite{pontryagin,Olver:96})}
 A {\bf local Lie group} $L$ is a manifold equipped with an open set $V\subset L$ together with
 \begin{enumerate}
     \item[{\rm(i)}] a multiplication map $m: V\times V \to L$, which satisfies $g_1(g_2g_3)=(g_1g_2)g_3$, as long as $g_1, g_2, g_3, g_1g_2, g_2g_3 \in V$ (here we write $m(g_1, g_2)$ as $g_1g_2$ for short),
     \item[{\rm(ii)}] an identity element $e\in V$, which satisfies $eg=ge=g$, for all  $  g\in V$,
     \item[{\rm(iii)}] an inverse map $i: V\to V$, such that $i(V)=V$, and $g^{-1}g=e=gg^{-1}$ (here we write $i(g)$ as $g^{-1}$ for short).
 \end{enumerate}
\end{defi}

\begin{thm}\label{thm:local-des}
Let $\huaB: U\to G$ be a
local  Rota-Baxter operator on the Lie group $(G,e,\cdot)$. Then it gives  rise to a local Lie group structure on $G:$
\begin{itemize}
  \item[$\bullet$]the   multiplication $\star$ is given by
\begin{equation}\label{eq:mul-loc}
    g_1 \star g_2 := g_1 \cdot\huaB (g_1) \cdot g_2\cdot \huaB(g_1)^{-1},
\end{equation}
\item[$\bullet$]the identity element is $e$, which is the identity element of the Lie group $G$,

\item[$\bullet$] the inverse $g^\dag$ of $g$ is given by
\begin{equation}\label{eq:inv-loc}
    g^\dag := \huaB(g)^{-1} \cdot g^{-1}\cdot \huaB(g).
\end{equation}
\end{itemize}

\end{thm}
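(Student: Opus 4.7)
The plan is to mirror the proof of Proposition \ref{pro:dec-gp} in the local setting, specialized to the Rota-Baxter case where $H = G$ and $\Phi = \Ad$. Under this specialization, equation \eqref{eq:loc-RB} reads $\huaB(g_1) \cdot \huaB(g_2) = \huaB(g_1 \cdot \Ad_{\huaB(g_1)} g_2) = \huaB(g_1 \star g_2)$, so that $\huaB$ becomes a local homomorphism from $(G, \star)$ to $(G, \cdot)$, provided all operations stay in the relevant domains.

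First I would select the open set $V \subset U$ in the definition of a local Lie group. Since $\huaB(e) = e$, both the multiplication $(g_1, g_2) \mapsto g_1 \star g_2$ and the inverse $g \mapsto g^\dag$ are continuous and send $(e,e)$ and $e$ respectively to $e$. Therefore I can choose an open neighborhood $V$ of $e$ contained in $U$ such that $g \star g' \in U$ and $g^\dag \in U$ for all $g, g' \in V$, so that all applications of $\huaB$ appearing in $\star$ and ${}^\dag$ are well-defined. Once $V$ is fixed, the identity axiom is immediate: using $\huaB(e) = e$, one directly computes $e \star g = g \star e = g$ for $g \in V$.

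Next I would verify that $g^\dag$ is the inverse of $g$ under $\star$. This is the local analog of equation \eqref{eq:proof-inv}: using \eqref{eq:loc-RB} to rewrite $\huaB(g)^{-1}$ as $\huaB(\huaB(g)^{-1} \cdot g^{-1} \cdot \huaB(g)) = \huaB(g^\dag)$, I expand $g^\dag \star g = g^\dag \cdot \huaB(g^\dag) \cdot g \cdot \huaB(g^\dag)^{-1}$ and collapse it to $e$ by the same cancellation shown in Proposition \ref{pro:dec-gp}; the computation of $g \star g^\dag = e$ is symmetric. Associativity of $\star$ on $V$ (whenever all intermediate products stay in $V$) is the local counterpart of equation \eqref{eq:proof-ass}: one repeatedly applies \eqref{eq:loc-RB} to rewrite $\huaB(g_1) \cdot \huaB(g_2) = \huaB(g_1 \star g_2)$ and reassociates inside $(G, \cdot)$.

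The main obstacle is purely bookkeeping: ensuring that every factor $\huaB(\,\cdot\,)$ appearing in the associativity calculation is actually applied to an element of $U$. Concretely, the identity $(g_1 \star g_2) \star g_3 = g_1 \star (g_2 \star g_3)$ requires $g_1 \star g_2$, $g_2 \star g_3$, and $g_1 \star (g_2 \star g_3)$ to lie in $U$. By the continuity argument used to define $V$, I can further shrink $V$ so that this holds for all $g_1, g_2, g_3 \in V$. Once this is arranged, the algebraic manipulations are the same as in the global case, and smoothness of $\star$ and ${}^\dag$ is inherited from smoothness of $\huaB$ and the group operations on $G$, completing the verification that $(G, V, \star, e, {}^\dag)$ is a local Lie group.
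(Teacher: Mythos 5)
Your route is essentially the paper's: specialize Proposition \ref{pro:dec-gp} to $H=G$ and $\Phi=\Ad$, note that \eqref{eq:loc-RB} says $\huaB(g_1\star g_2)=\huaB(g_1)\cdot\huaB(g_2)$ whenever $g_1\star g_2\in U$, derive $\huaB(g^\dag)=\huaB(g)^{-1}$, and rerun the computations \eqref{eq:proof-inv} and \eqref{eq:proof-ass} while tracking domains. There is, however, one genuine gap: the definition of a local Lie group used in the paper requires the inversion to be a map $i\colon V\to V$ with $i(V)=V$, and your $V$ (a neighborhood chosen by continuity so that $g\star g'\in U$ and $g^\dag\in U$ for $g,g'\in V$) only guarantees $V^\dag\subseteq U$; it need not satisfy $V^\dag\subseteq V$, let alone $V^\dag=V$. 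The paper closes this by taking $V:=U\cap U^\dag$ and proving the involution property $(g^\dag)^\dag=g$ --- an immediate consequence of $\huaB(g^\dag)=\huaB(g)^{-1}$, which you do establish --- whence $(U^\dag)^\dag=U$ and $V^\dag=V$. You need to add this step; as written, axiom (iii) of the definition is unverified.

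Two smaller points. The extra shrinking of $V$ for associativity is unnecessary: the axiom only demands $(g_1\star g_2)\star g_3=g_1\star(g_2\star g_3)$ \emph{when} $g_1,g_2,g_3,g_1\star g_2,g_2\star g_3$ already lie in $V$, and under that hypothesis every argument of $\huaB$ occurring in \eqref{eq:proof-ass} is automatically in $V\subseteq U$; this is exactly the paper's observation. (Note in particular that $g_1\star(g_2\star g_3)=g_1\cdot\huaB(g_1)\cdot(g_2\star g_3)\cdot\huaB(g_1)^{-1}$ involves only $\huaB(g_1)$, so your requirement $g_1\star(g_2\star g_3)\in U$ is not needed.) Finally, you use $\huaB(e)=e$ without comment; it follows from \eqref{eq:loc-RB} with $h_1=h_2=e$ and deserves a line.
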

\begin{proof}
Notice that the inverse is defined from $U$ to $G$, thus we may take $V:=U\cap U^\dag$. Clearly $V$ is an open set of $G$ containing $e$. Note that $\star: U\times U \to G$ is well-defined, thus $\star$ is well-defined also on $V\times V$.

If $g_1, g_2, g_3, g_1\star g_2, g_2\star g_3 \in V$, then the same calculation \eqref{eq:proof-ass} for proving the associativity in Proposition \ref{pro:dec-gp} still goes through because all the relevant elements are in $V$ which is within the definition domain of $\huaB$.

By \eqref{eq:loc-RB}, for all $g\in U$, we have $\huaB(g)\cdot\huaB(g^\dag ) = \huaB(g \cdot\huaB(g)\cdot g^\dag \cdot\huaB(g)^{-1})=\huaB(e)=e$.  This shows that $\huaB(g^\dag)=\huaB(g)^{-1}$. Thus
\begin{equation}
    (g^\dag)^\dag = \huaB(g)\cdot (g^\dag)^{-1} \cdot\huaB(g)^{-1} = g.
\end{equation} Therefore, $(U^\dag)^\dag = U$, which implies that $(V)^\dag = V$. Moreover, for all $ g\in V$, $g^\dag \star g = e = g\star g^\dag$ follows from the same calculation \eqref{eq:proof-inv}   in Proposition \ref{pro:dec-gp} because $\huaB(g^\dag)=\huaB(g)^{-1}$ still holds for $g \in U$. Thus $(V\subset G, \star, \dag)$ is a local Lie group.
\end{proof}
This local Lie group is called the {\bf descendent local Lie group} of $\huaB$.

\begin{rmk}\label{rmk:local-des-diff}
Similar to the case of the descendent Lie group, it is clear that
\begin{itemize}
    \item  the Lie algebra of the descendent local Lie group $(V\subset G, \star, \dag)$ is  the descendent Lie algebra $(\g, [\cdot,\cdot]_B)$, where $B=\Diff(\huaB)$.
    \item $\huaB: (V\subset G, \star, \dag) \to G$ is a local Lie group homomorphism. This in particular implies that $\Img (\huaB) $ and $G_-$ are locally isomorphic near the identity, where $G_-$ is the Lie group integrating the Lie algebra  $\g_-:=\Img (B)$.
\end{itemize}
\end{rmk}

\subsection{Application in the local factorization problem}
As mentioned in Remark \ref{rmk:mybe},   Rota-Baxter operators $B$ on a Lie algebra $\g$ one-to-one correspond  to   modified $r$-matrices $R$. The study of integration of such modified $r$-matrices gives arise to the Adler-Kostant-Symes (AKS) theory \cite{STS}. The AKS theory \cite{Camille-book} allows one to construct an explicit integral curve $L(t)$ of certain Hamiltonian function $h$ on $\g^*$, by
\begin{equation}
 L(t)=\Ad^*_{ g_{+}(t)} L_0=\Ad^*_{g_{-}(t)} L_0,
\end{equation}
with initial value $L_0 \in \g^*$, as long as we know the solution $(g_{-}(t), g_{+}(t))$ of the local factorization problem, which in turn comes from a split (i.e. an infinitesimal factorization) of $\g$. However, usually, the solution $(g_{-}(t), g_{+}(t))$ is not explicitly given. With the help of the local integration of Rota-Baxter operators, we can give such a local factorization explicitly.

To state this more precisely, let us first recall some facts adapted to the Rota-Baxter operators context from   \cite{GLS,STS}, and we refer the readers to the references therein for the original references.  Let $B$ be a Rota-Baxter operator on $\g$. Then it gives rise to an infinitesimal factorization (or a split) \cite[Proposition 9]{STS} of $\g$ as follows. Denote by $ \g_{+}=\Img(B+\Id), ~\g_{-}=\Img (B).$
 We further denote by $\mathfrak{k}_+:= \ker (B)$ and $\mathfrak{k}_-:=\ker (B+\Id)$. Since both $B$ and $B+\Id$ are Lie algebra homomorphisms from the descendent Lie algebra $\g_B$ to $\g$, $\g_+,\g_-,\mathfrak{k}_+$ and $\mathfrak{k}_-$ are Lie subalgebras. Moreover,  $\mathfrak{k}_{\pm}\subset \g_{\pm}$ are Lie ideals, and $\theta: \g_+/\mathfrak{k}_+ \to \g_-/\mathfrak{k}_-$ given by $(B+\Id)(u) \mapsto B(u)$ on the representatives of equivalence classes is well-defined and is a Lie algebra isomorphism. The map $\theta$ is called the {\em Cayley transform} of $B$. Define $    \g_\theta = \g_{+} \oplus \g_{-} $ by
   $$
   \g_\theta=\{(x,y)\in\g_{+} \oplus \g_{-}~\mbox{ such that }~\theta (\bar{x})=\bar{y} \}.
   $$
   Then any $x\in\g$ can be uniquely expressed as $x=x_+-x_-$ for $(x_+,x_-)\in\g_\theta.$

   Let $G$ be a Lie group of $\g$, and let $G_{\pm}$ and $K_{\pm}$ be the Lie subgroups of $G$ integrating the above Lie subalgebras $\g_{\pm}$ and $\mathfrak{k}_{\pm}$ correspondingly. Then the Cayley transform $\theta$ integrates also to the group level into a (local) Lie group homomorphism $\Theta: G_{+}/K_+\to G_-/K_-$, by Lie's II Theorem. Let us denote by $\bar{g}$ the equivalence class of $g$. Then the solution $( g_{+}(t),g_{-}(t)) \in G_+\times G_-$ of the (local) factorization problem in \cite[Theorem 11]{STS} is the solution of the following equations,
\begin{equation*}
    \exp 2t X_0 = g_{+}(t)\cdot g_{-}(t)^{-1}, \quad \Theta(\bar{g}_+(t)) = \bar{g}_{-}(t), \quad \text{where} \: X_0 = dh|_{L_0} \in \g,
\end{equation*} with the initial value $g_{\pm}(0)=e\in G$, for sufficiently small $t$.

\begin{thm}
Let $(\g, [\cdot, \cdot]_\g, B)$ be a Rota-Baxter Lie algebra.  Then the above solution $(g_{+},g_{-})$ of the (local) factorization problem
has an explicit expression,
\begin{equation}\label{eq:solu}
    g_- (t) = \huaB ( \exp 2t X_0), \quad g_+(t) = \exp 2tX_0 \cdot\huaB (\exp 2tX_0),
\end{equation}
for sufficiently small $t$, with $\huaB $ given explicitly by \eqref{eq:exp} locally.
\end{thm}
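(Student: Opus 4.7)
The plan is to verify the four conditions that define the solution of the local factorization problem: (i) the initial values $g_\pm(0) = e$; (ii) the product identity $\exp 2tX_0 = g_+(t) \cdot g_-(t)^{-1}$; (iii) the memberships $g_\pm(t) \in G_\pm$; and (iv) the Cayley transform condition $\Theta(\bar g_+(t)) = \bar g_-(t)$. Conditions (i) and (ii) are immediate from $\huaB(e)=e$ and direct substitution into \eqref{eq:solu}, so the substance lies in (iii) and (iv).

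For (iii), the inclusion $g_-(t) \in G_-$ follows from Remark \ref{rmk:local-des-diff}, which asserts that $\Img(\huaB)$ and $G_-$ are locally isomorphic near the identity. For $g_+(t) \in G_+$, the central step is to introduce the auxiliary map $\tilde\huaB: U \to G$, $\tilde\huaB(h) := h \cdot \huaB(h)$, and to verify that $\tilde\huaB$ is a local Lie group homomorphism from the descendent local Lie group $(V \subset G, \star)$ of Theorem \ref{thm:local-des} into $G$. This is a short algebraic calculation using the identity $\huaB(h_1\star h_2) = \huaB(h_1)\cdot\huaB(h_2)$ (a restatement of \eqref{eq:loc-RB}) together with the definition \eqref{eq:mul-loc} of $\star$. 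The differential of $\tilde\huaB$ at the identity is $u \mapsto (B+\Id)(u)$, whose image is $\g_+$, so $\tilde\huaB$ takes values locally in $G_+$, giving $g_+(t) \in G_+$ for sufficiently small $t$.

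For (iv), I would argue that the two composed maps $\Theta \circ (\pi_+ \circ \tilde\huaB)$ and $\pi_- \circ \huaB$, viewed as maps $(V,\star) \to G_-/K_-$ (with $\pi_\pm$ the quotient maps onto $G_\pm/K_\pm$), are local Lie group homomorphisms with the same differential at the identity. Indeed, the differential of $\pi_-\circ \huaB$ is $u \mapsto \overline{B(u)}$, while the differential of $\Theta \circ (\pi_+\circ\tilde\huaB)$ is $\theta(\overline{(B+\Id)(u)}) = \overline{B(u)}$ by the very definition of the Cayley transform. The local version of Lie's second theorem then forces these two local homomorphisms to agree on a neighborhood of the identity; specialization to $h = \exp 2tX_0$ for sufficiently small $t$ yields $\Theta(\bar g_+(t)) = \bar g_-(t)$.

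The main obstacle I anticipate is bookkeeping of domains: everything is stated only on a neighborhood of the identity, so I must ensure that $\exp 2tX_0$, $\huaB(\exp 2tX_0)$, and the various compositions stay inside the appropriate open sets for small $t$, which is automatic by continuity but should be noted. The computational core of the argument is the homomorphism property of $\tilde\huaB$ with respect to $\star$, which mirrors the associativity calculation \eqref{eq:proof-ass} in the proof of Proposition \ref{pro:dec-gp} with one additional factor threaded through; no deeper obstruction arises because the Cayley transform was set up precisely so that the two differentials in (iv) match.
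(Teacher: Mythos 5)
Your proposal is correct, and its computational core coincides with the paper's proof: both arguments hinge on the map $g\mapsto g\cdot\huaB(g)$ (which the paper names $\huaB_+$ and you name $\tilde\huaB$), shown to be a local Lie group homomorphism from the descendent local Lie group of Theorem \ref{thm:local-des} into $G$ with differential $B+\Id$, hence landing locally in $G_+$, while Remark \ref{rmk:local-des-diff} places $\Img(\huaB)$ locally in $G_-$; the product identity then reduces to the tautology $g=\bigl(g\cdot\huaB(g)\bigr)\cdot\huaB(g)^{-1}$. Two differences are worth recording. First, the paper does not prove the homomorphism property of $\huaB_+$ but cites \cite[Proposition 3.1]{GLS}, whereas you verify it directly from \eqref{eq:loc-RB} and \eqref{eq:mul-loc} by the same manipulation as \eqref{eq:proof-ass}; either route is fine. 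Second, and more substantively, your step (iv) is genuinely additional: the paper's proof checks only the product identity $\exp 2tX_0=g_+(t)\cdot g_-(t)^{-1}$ and the memberships $g_\pm(t)\in G_\pm$, and never verifies the Cayley-transform constraint $\Theta(\bar{g}_+(t))=\bar{g}_-(t)$, even though that constraint is part of the definition of the factorization problem recalled just before the theorem. Your argument for it --- that $\Theta\circ\pi_+\circ\tilde\huaB$ and $\pi_-\circ\huaB$ are local homomorphisms out of the descendent local Lie group whose differentials both equal $u\mapsto\overline{B(u)}$ (the latter by the very definition of the Cayley transform on representatives), so that they agree near the identity by the uniqueness part of Lie's second theorem --- is correct and supplies exactly what the published proof leaves implicit. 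Your domain bookkeeping remark is also apt but, as you say, handled by continuity. In short, your write-up follows the paper's strategy and is, if anything, more complete.
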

\begin{proof}
Let   $(G, \cdot, \huaB)$ be the integrated local Rota-Baxter Lie group of $(\g, [\cdot, \cdot]_\g, B)$ given in Theorem \ref{thm:obj}. By Remark \ref{rmk:local-des-diff},   $\Img(\huaB)$ and    $G_-$ are locally isomorphic. Define  $\huaB_+:U\to G$   by $$\huaB_+(g)=g\cdot\huaB(g).$$
By the same discussion in \cite[Proposition 3.1]{GLS},  $\huaB_+$ is a local Lie group homomorphism from the descendent local Lie group to  $G$. Since $\Img(\huaB_+)$ and $G_+$ have the same Lie algebra $\g_+$, so they are also locally isomorphic. Obviously, locally we have
$$
g=g\cdot\huaB(g)\cdot(\huaB(g))^{-1},\quad g\cdot\huaB(g)\in\Img(\huaB_+),~ \huaB(g)\in \Img(\huaB).
$$
Therefore, we have $\exp 2t X_0 = g_{+}(t)\cdot g_{-}(t)^{-1}$, where $g_{+}(t)$ and $ g_{-}(t)$ are given by \eqref{eq:solu}.
\end{proof}

\subsection{Application in the integration of matched pairs}
A {\bf matched pair of Lie algebras}  consists of a pair of Lie algebras  $(\g,\h)$, a  representation $\rho: \g\to\gl(\h)$ of $\g$ on $\h$ and a   representation $\mu: \h\to\gl(\g)$ of $\h$ on $\g$ such that
\begin{eqnarray}
\label{eq:mp1}\rho(x) [\xi,\eta]_{\h}&=&[\rho(x)\xi,\eta]_{\h}+[\xi,\rho(x) \eta]_{\h}+\rho\big((\mu(\eta)x\big)\xi-\rho\big(\mu(\xi)x\big) \eta,\\
\label{eq:mp2}\mu(\xi) [x, y]_{\g}&=&[\mu(\xi)x,y]_{\g}+[x,\mu(\xi) y]_{\g}+\mu\big(\rho(y) \xi\big)x-\mu\big(\rho(x)\xi\big)y,
\end{eqnarray}
 for all $x,y\in \g$ and $\xi,\eta\in \h$. %We will denote a matched pair of Lie algebras by $(\g,\h;\rho,\mu)$, or simply by $(\g,\h)$.

It is well known that if there is a Lie algebra $\frkk$ such that both $\g$ and $\h$ are Lie subalgebras, and $\frkk$ is isomorphic to $\g\oplus \h$ as vector spaces, then $(\g,\h)$ is a matched pair of Lie algebras.
\begin{lem}
Let $(\g, [\cdot, \cdot]_\g, B)$ be a Rota-Baxter Lie algebra. Then $(\g_{B}, \g_{diag})$ is a matched pair of Lie algebras, where $\g_{diag}=\{(x, x)|\forall x\in\g\}$ and $\g_B=\{(B(x), x+B(x))|\forall x\in\g\}$.
\end{lem}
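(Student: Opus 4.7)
My plan is to exhibit a Lie algebra $\frkk$ into which both $\g_B$ and $\g_{\mathrm{diag}}$ embed as Lie subalgebras, with $\frkk \cong \g_B \oplus \g_{\mathrm{diag}}$ as vector spaces; then the matched pair structure follows from the general principle quoted in the paragraph immediately preceding the lemma.

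First I would take $\frkk := \g\oplus\g$ equipped with the direct-sum (componentwise) Lie bracket. The subspace $\g_{\mathrm{diag}}$ is clearly a Lie subalgebra, isomorphic to $\g$, since
\[
[(x,x),(y,y)] = ([x,y]_\g,\,[x,y]_\g) \in \g_{\mathrm{diag}}.
\]

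The key step is verifying that $\g_B$ is a Lie subalgebra of $\frkk$, and this is where the Rota-Baxter axiom is used. Computing in $\frkk$,
\[
\bigl[(B(x),\,x+B(x)),\,(B(y),\,y+B(y))\bigr] = \bigl([B(x),B(y)]_\g,\;[x+B(x),y+B(y)]_\g\bigr).
\]
The Rota-Baxter identity of weight $1$ gives $[B(x),B(y)]_\g = B([x,y]_B)$ with $[x,y]_B = [B(x),y]_\g+[x,B(y)]_\g+[x,y]_\g$, while the second component expands as $[x,y]_\g+[x,B(y)]_\g+[B(x),y]_\g+[B(x),B(y)]_\g = [x,y]_B + B([x,y]_B)$. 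Hence the bracket lies in $\g_B$, and moreover $\g_B$ is isomorphic as a Lie algebra to the descendent Lie algebra $(\g,[\cdot,\cdot]_B)$, which is consistent with the notation.

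Next I would check the vector space decomposition $\frkk = \g_B \oplus \g_{\mathrm{diag}}$. If $(B(x),x+B(x)) = (y,y)$, then $x+B(x) = B(x)$, forcing $x=0$, so $\g_B \cap \g_{\mathrm{diag}} = 0$; since $\dim \g_B + \dim \g_{\mathrm{diag}} = 2\dim\g = \dim \frkk$, the decomposition holds. Applying the general criterion recalled just before the lemma (if a Lie algebra decomposes as a vector space sum of two Lie subalgebras then they form a matched pair), this concludes the proof. The induced representations $\rho:\g_B\to\gl(\g_{\mathrm{diag}})$ and $\mu:\g_{\mathrm{diag}}\to\gl(\g_B)$ are determined by the projections of $[\g_B,\g_{\mathrm{diag}}]_\frkk$ along the two summands, and an interested reader can write them out explicitly by solving the linear system $[(B(x),x+B(x)),(y,y)]_\frkk = \rho(\cdots)(y,y) + \mu(y,y)(\cdots)$, which yields $z=[x,y]_\g$ as the parameter of the $\g_B$-component.

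The only mildly nontrivial step is the verification that $\g_B$ is closed under the bracket of $\frkk$; everything else is bookkeeping. No separate check of the compatibility conditions \eqref{eq:mp1}--\eqref{eq:mp2} is needed, as these are automatic consequences of the direct-sum decomposition of a Lie algebra into two subalgebras.
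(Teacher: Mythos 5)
Your proposal is correct and follows essentially the same route as the paper: realize $\g\oplus\g$ with the componentwise bracket, check that $\g_{\mathrm{diag}}$ is obviously a subalgebra, use the weight-$1$ Rota-Baxter identity to show $\g_B$ is closed under the bracket (identifying it with the descendent Lie algebra), and conclude via the vector-space decomposition and the general criterion stated before the lemma. Your explicit verification that $\g_B\cap\g_{\mathrm{diag}}=0$ is a small improvement on the paper's "it is obvious".
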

\begin{proof}
Consider  the  direct sum Lie algebra  $\g\oplus\g$, in which the Lie bracket is given by
$$
[(x,y),(x',y')]=([x,x']_\g,[y,y']_\g).
$$
It is obvious that $\g_{diag}$ is a Lie subalgebra of $\g\oplus \g$.  Since $(\g, [\cdot, \cdot]_\g, B)$ is a Rota-Baxter Lie algebra, we have
\begin{eqnarray*}
&&[(B(x), x+B(x)), (B(y), y+B(y))]\\
&=&([B(x), B(y)]_{\g}, [x, y]_\g+[x, B(y)]_\g+[B(x), y]_\g+[B(x), B(y)]_\g)\\
&=&\Big(B([x, y]_\g+[x, B(y)]_\g+[B(x), y]_\g), \\
&&[x, y]_\g+[x, B(y)]_\g+[B(x), y]_\g+B([x, y]_\g+[x, B(y)]_\g+[B(x), y]_\g)\Big)\\
&\in&\g_B,
\end{eqnarray*}
which implies that  $\g_B$ is a Lie subalgebra of $\g\oplus\g$. Moreover, it is obvious that $\g_{diag}\oplus\g_B$ is isomorphic to $\g\oplus\g$ as vector spaces. Therefore, $(\g_{B}, \g_{diag})$ is a matched pair of Lie algebras.
\end{proof}

A pair of Lie groups $(P,Q)$ is called a {\bf matched pair of Lie groups} (\cite{Ma}) if there is a left action of $P$ on $Q$ and a right action of $Q$ on $P$:
\[P\times Q\to Q,\qquad (p,q)\mapsto  p\triangleright q;\qquad P\times Q\to P\qquad (p,q)\mapsto p\triangleleft q,\]
such that
\begin{eqnarray}
\label{mpg1} p\triangleright (q_1q_2)&=&(p\triangleright q_1)\big((p\triangleleft q_1)\triangleright q_2\big);\\
\label{mpg2} (p_1p_2)\triangleleft q&=&\big(p_1\triangleleft (p_2\triangleright q)\big)   (p_2\triangleleft q).
\end{eqnarray}

Similar to the case of Lie algebras, the following equivalent characterization of matched pairs of Lie groups is well known.

\begin{pro}\label{pro:mpg}
 A pair of Lie groups $(P,Q)$ is a  matched pair if there exists a Lie group $K$, and injective Lie group homomorphism $i_P:P\to K$ and  $i_Q:Q\to K$, such that  the map $P\times Q\lon K$ defined by $(p, q)\to i_P(p)i_Q(q)$ is a diffeomorphism.
\end{pro}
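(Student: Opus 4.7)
The plan is to construct the two actions $\triangleright$ and $\triangleleft$ directly out of the unique factorization provided by the diffeomorphism, and then derive the matched pair axioms \eqref{mpg1} and \eqref{mpg2} by expanding associators in $K$ in two different ways and invoking uniqueness. Since the assertion is the ``if'' direction, no converse needs to be addressed.

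First, I would observe that the hypothesis provides a diffeomorphism $\Phi:P\times Q\to K$, $(p,q)\mapsto i_P(p)i_Q(q)$. Composing with inversion and swapping coordinates gives a second diffeomorphism $\Psi:Q\times P\to K$, $(q,p)\mapsto i_Q(q)i_P(p)$, because $i_P(p)i_Q(q)=\bigl(i_Q(q^{-1})i_P(p^{-1})\bigr)^{-1}$. Combining these, the smooth self-map $\Psi^{-1}\circ\Phi:P\times Q\to Q\times P$ is well-defined; I would write its components as
\[
\Psi^{-1}\circ\Phi(p,q)=\bigl(p\triangleright q,\, p\triangleleft q\bigr),
\]
i.e.\ $p\triangleright q\in Q$ and $p\triangleleft q\in P$ are uniquely determined by
\begin{equation}\label{eq:match-def}
 i_P(p)\,i_Q(q)=i_Q(p\triangleright q)\,i_P(p\triangleleft q).
\end{equation}
Smoothness of $\triangleright$ and $\triangleleft$ is automatic. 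Taking $p=e_P$ in \eqref{eq:match-def} gives $e_P\triangleright q=q$ and $e_P\triangleleft q=e_P$; taking $q=e_Q$ gives $p\triangleright e_Q=e_Q$ and $p\triangleleft e_Q=p$.

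Next I would verify the action and compatibility conditions by computing $i_P(p_1p_2)i_Q(q)$ in two ways. On the one hand it equals $i_Q((p_1p_2)\triangleright q)\,i_P((p_1p_2)\triangleleft q)$. On the other hand, applying \eqref{eq:match-def} twice gives
\[
 i_P(p_1)i_P(p_2)i_Q(q)=i_Q\bigl(p_1\triangleright(p_2\triangleright q)\bigr)\,i_P\bigl(p_1\triangleleft(p_2\triangleright q)\bigr)\,i_P(p_2\triangleleft q).
\]
Uniqueness of the $QP$-factorization (the diffeomorphism $\Psi$) then yields simultaneously the left action law $(p_1p_2)\triangleright q=p_1\triangleright(p_2\triangleright q)$ and the relation $(p_1p_2)\triangleleft q=\bigl(p_1\triangleleft(p_2\triangleright q)\bigr)(p_2\triangleleft q)$, which is exactly \eqref{mpg2}. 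A symmetric calculation starting from $i_P(p)i_Q(q_1q_2)$ produces the right action law $p\triangleleft(q_1q_2)=(p\triangleleft q_1)\triangleleft q_2$ together with \eqref{mpg1}.

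The proof is essentially bookkeeping: associativity in $K$ together with uniqueness of factorization does all the work, and there is no serious obstacle. The only care needed is checking that the definition \eqref{eq:match-def} makes sense set-theoretically, which is where the two-sided factorization property $K=i_P(P)i_Q(Q)=i_Q(Q)i_P(P)$ (both as diffeomorphisms) is essential.
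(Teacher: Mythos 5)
Your argument is correct: defining $p\triangleright q$ and $p\triangleleft q$ by the unique $QP$-factorization of $i_P(p)i_Q(q)$ (which exists because the $QP$-multiplication map is the composite of the given diffeomorphism with inversions and a swap) and then reading off \eqref{mpg1}, \eqref{mpg2} and the action axioms from associativity in $K$ plus uniqueness of factorization is the standard proof. The paper itself states Proposition \ref{pro:mpg} without proof, citing it as well known, so there is no authorial argument to compare against; your write-up supplies exactly the expected details and has no gaps.
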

It is not hard to see that a matched pair of Lie groups differentiates to a matched pair of Lie algebras. However, not every matched pair of Lie algebras integrates to a matched pair of Lie groups (e.g.
\cite[Example 2.7]{ES} gives rise to a counter example). Some partial result is available for this integration problem. For example, it is proved in \cite{Ma} that a matched pair of Lie algebras $(\g, \h)$ integrates to a matched pair of Lie groups $(G, H)$, if the simply connected Lie groups $G$ and $H$ integrating $\g$ and $\h$ respectively are compact. In the following, we give another integration result for matched pairs coming from Rota-Baxter operators using the integration of Rota-Baxter operators.

\begin{thm}
Let $(\g, [\cdot, \cdot]_\g, B)$ be a Rota-Baxter Lie algebra. Assume that $B$ is integrable and $(G, \cdot, \huaB)$ is the integrated Rota-Baxter Lie group of $(\g, [\cdot, \cdot]_\g, B)$. Then $( G_{\huaB}, G_{diag})$ is a matched pair of Lie groups, where $G_{diag}=\{(g, g)|\forall g\in G\}$ and $G_{\huaB}=\{(\huaB(g), g\cdot\huaB(g))|\forall g\in G\}$. Furthermore, the differentiation of  $( G_{\huaB}, G_{diag})$ is the matched pair $(\g_{B}, \g_{diag})$.
\end{thm}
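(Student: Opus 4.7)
My plan is to invoke Proposition \ref{pro:mpg} with the enveloping Lie group $K := G\times G$, embedding both $G_\huaB$ and $G_{diag}$ as Lie subgroups of $K$ and checking that the multiplication map gives a global diffeomorphism onto $K$.

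First, $G_{diag}$ is manifestly the image of the diagonal embedding $G\hookrightarrow G\times G$ and hence a Lie subgroup. For $G_\huaB$, I would check closure under multiplication directly: given two elements $(\huaB(g),g\cdot\huaB(g))$ and $(\huaB(g'),g'\cdot\huaB(g'))$, set $g'' := g\cdot\huaB(g)\cdot g'\cdot\huaB(g)^{-1}$ (which is $g\star g'$ in the descendent group, since here $\Phi=\Ad$); then the Rota-Baxter relation \eqref{rRBg} gives $\huaB(g'')=\huaB(g)\cdot\huaB(g')$, and a cancellation yields $g''\cdot\huaB(g'')=g\cdot\huaB(g)\cdot g'\cdot\huaB(g')$, so the product lies in $G_\huaB$. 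Closure under inversion uses the identity $\huaB(g^\dag)=\huaB(g)^{-1}$ established in the proof of Proposition \ref{pro:dec-gp}: the inverse $(\huaB(g)^{-1},\huaB(g)^{-1}\cdot g^{-1})$ equals $(\huaB(g^\dag),g^\dag\cdot\huaB(g^\dag))\in G_\huaB$ by a direct computation. Combined with the evident smooth parametrization by $g\in G$ (whose differential at $e$ is the injective map $x\mapsto(B(x),x+B(x))$), this exhibits $G_\huaB$ as a Lie subgroup isomorphic to the descendent Lie group.

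Next I would apply Proposition \ref{pro:mpg} by writing down an explicit inverse of the multiplication map $\mu: G_\huaB\times G_{diag}\to G\times G$. Given any $(a,b)\in G\times G$, set $g := b\cdot a^{-1}$ and $h := \huaB(g)^{-1}\cdot a$; a direct verification gives
\[
\mu\bigl((\huaB(g),g\cdot\huaB(g)),(h,h)\bigr) = \bigl(\huaB(g)\cdot h,\; g\cdot\huaB(g)\cdot h\bigr) = (a,b),
\]
and the assignment $(a,b)\mapsto\bigl((\huaB(g),g\cdot\huaB(g)),(h,h)\bigr)$ is plainly smooth in $(a,b)$. Hence $\mu$ is a global diffeomorphism, so Proposition \ref{pro:mpg} produces a matched pair of Lie groups $(G_\huaB,G_{diag})$. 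For the differentiation statement, differentiating the curve $t\mapsto(\huaB(g(t)),g(t)\cdot\huaB(g(t)))$ at $t=0$ with $\dot g(0)=x$ yields the tangent vector $(B(x),x+B(x))\in\g_B$, identifying $\Lie(G_\huaB)=\g_B$; the diagonal case is immediate. Since both the group-level and algebra-level matched pair structures arise from the same ambient products $G\times G$ and $\g\oplus\g$, the differentiated left/right actions automatically coincide with those of $(\g_B,\g_{diag})$ from the preceding lemma.

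The step I expect to require the most care is the global (rather than merely local) diffeomorphism property of $\mu$, since this is what upgrades the infinitesimal matched pair to a genuine one; but the explicit inverse above works globally because $B$ is assumed integrable, so $\huaB$ is defined on all of $G$. Verifying that $G_\huaB$ is a closed embedded subgroup (not just immersed) can likewise be read off from this inverse, since $\mu^{-1}$ restricts to give a global smooth section of the projection to the $G_\huaB$-factor. The remaining verifications are straightforward applications of \eqref{rRBg} together with the formula \eqref{eq:dag} specialized to $\Phi=\Ad$.
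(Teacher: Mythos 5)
Your proposal is correct and follows essentially the same route as the paper: both embed $G_{\huaB}$ and $G_{diag}$ into $K=G\times G$, verify closure of $G_{\huaB}$ under multiplication and inversion via the Rota-Baxter identity \eqref{rRBg} with $\Phi=\Ad$, exhibit the same explicit global factorization $(a,b)=\bigl(\huaB(b\cdot a^{-1}),\,b\cdot a^{-1}\cdot\huaB(b\cdot a^{-1})\bigr)\cdot\bigl(\huaB(b\cdot a^{-1})^{-1}\cdot a,\,\huaB(b\cdot a^{-1})^{-1}\cdot a\bigr)$, and invoke Proposition \ref{pro:mpg}. The only cosmetic difference is how the Lie-subgroup property of $G_{\huaB}$ is justified (the paper uses closedness of $G_{\huaB}$ in $G\times G$, you use the global section coming from $\mu^{-1}$), which does not change the argument.
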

\begin{proof}
Consider the  direct product Lie group $G\times G$, in which the group multiplication is given by
$$
(g,h)(g',h')=(g\cdot g',h\cdot h').
$$
It is obvious that $G_{diag}$ is a Lie subgroup.  Since $(G, \cdot, \huaB)$ is a Rota-Baxter Lie group, for any $g, h\in G$, we have
\begin{eqnarray*}
&&\Big(\huaB(g), g\cdot\huaB(g)\Big)\Big(\huaB(h), h\cdot\huaB(h)\Big)\\
&=&\Big(\huaB(g)\cdot\huaB(h), g\cdot\huaB(g)\cdot h\cdot\huaB(h)\Big)\\
&=&\Big(\huaB(g\cdot\huaB(g)\cdot h\cdot(\huaB(g))^{-1}), g\cdot\huaB(g)\cdot h\cdot(\huaB(g))^{-1}\cdot\huaB(g\cdot\huaB(g)\cdot h\cdot(\huaB(g))^{-1})\Big)\\
&\in&G_\huaB
\end{eqnarray*}
and
\begin{eqnarray*}
(\huaB(g), g\cdot\huaB(g))^{-1}&=&\Big(\huaB((\huaB(g))^{-1}\cdot g^{-1}\cdot\huaB(g)), (\huaB(g))^{-1}\cdot g^{-1}\cdot\huaB(g)\cdot \huaB((\huaB(g))^{-1}\cdot g^{-1}\cdot\huaB(g))\Big)\\
&\in&G_\huaB,
\end{eqnarray*}
which implies that $G_{\huaB}$ is a subgroup. Moreover, since  $\huaB$ is a smooth map, it follows that $G_\huaB$ is a closed set in $G\times G$. Thus $G_{\huaB}$ is a Lie subgroup.
For any $(a, b)\in G\times G$, there is
$$
(a, b)=\Big(\huaB(b\cdot a^{-1}), b\cdot a^{-1}\cdot\huaB(b\cdot a^{-1})\Big)\Big((\huaB(b\cdot a^{-1}))^{-1}\cdot a, (\huaB(b\cdot a^{-1}))^{-1}\cdot a\Big).
$$
Thus the map $G_\huaB\times G_{diag}\lon G\times G$ defined by $\Big((\huaB(g), g\cdot\huaB(g)), (h, h)\Big)\lon \Big(\huaB(g)\cdot h, g\cdot\huaB(g)\cdot h\Big)$ is a diffeomorphism. Therefore, by Proposition \ref{pro:mpg}, $(G_{\huaB}, G_{diag})$ is a matched pair of Lie groups. The other conclusion is straightforward.
\end{proof}

\begin{rmk}
Despite there are explicit counter examples of nonintegrable matched pairs, we do not know yet an explicit example of nonintegrable Rota-Baxter operators.
\end{rmk}

 \end{document}